\definecolor{Chocolat}{rgb}{0.36, 0.2, 0.09}
\definecolor{BleuTresFonce}{rgb}{0.215, 0.215, 0.36}
\definecolor{EgyptianBlue}{rgb}{0.06, 0.2, 0.65}
\newtheorem*{itheorem}{Theorem}
\newtheorem{theorem}{Theorem}[section]
\newtheorem{corollary}[theorem]{Corollary}
\newtheorem{lemma}[theorem]{Lemma}
\newtheorem{proposition}[theorem]{Proposition}
\theoremstyle{definition}
\newtheorem{example}[theorem]{Example}
\newtheorem{remark}[theorem]{Remark}
\newcommand{\ac}{\scriptstyle \text{\rm !`}}
\DeclareMathAlphabet{\pazocal}{OMS}{zplm}{m}{n}
\def\calA{\pazocal{A}}
\def\calC{\pazocal{C}}
\def\calF{\pazocal{F}}
\def\calO{\pazocal{O}}
\def\calP{\pazocal{P}}
\def\calS{\pazocal{S}}
\def\calT{\pazocal{T}}
\def\calU{\pazocal{U}}
\def\calW{\pazocal{W}}
\def\o{{\mathsf{o}}}
\def\w{{\mathsf{w}}}
\DeclareMathOperator{\Lie}{Lie}
\DeclareMathOperator{\ULie}{\calU{}Lie}
\DeclareMathOperator{\Com}{Com}
\DeclareMathOperator{\Ass}{Ass}
\DeclareMathOperator{\uCom}{uCom}
\DeclareMathOperator{\uAss}{uAss}
\DeclareMathOperator{\PL}{PreLie}
\DeclareMathOperator{\id}{id}
\DeclareMathOperator{\tr}{tr}
\DeclareMathOperator{\Div}{div}
\DeclareMathOperator{\Cyc}{Cyc}
\DeclareMathOperator{\Fin}{\mathsf{Fin}}
\DeclareMathOperator{\Hom}{Hom}
\DeclareMathOperator{\End}{End}
\DeclareMathOperator{\WEnd}{WEnd}
\DeclareMathOperator{\Der}{Der}
\DeclareMathOperator{\SDer}{SDer}
\DeclareMathOperator{\Aut}{Aut}
\def\d{\mathrm{d}}
\DeclareMathAlphabet{\mathbbold}{U}{bbold}{m}{n}
\def\k{\mathbbold{k}}
\def\sfC{{\ensuremath{\mathsf{C}}}}
\begin{document}

\title[Stable homology and wheeled operads]{Stable homology of Lie algebras of derivations\\ and homotopy invariants of wheeled operads}

\author{Vladimir Dotsenko}

\address{ 
Institut de Recherche Math\'ematique Avanc\'ee, UMR 7501, Universit\'e de Strasbourg et CNRS, 7 rue Ren\'e-Descartes, 67000 Strasbourg CEDEX, France}

\email{vdotsenko@unistra.fr}

\dedicatory{To Boris Feigin, with gratitude and admiration}
\date{}

\begin{abstract}
We prove a theorem that computes, for any augmented operad $\calO$, the stable homology of the Lie algebra of derivations of the free algebra $\calO(V)$ with twisted bivariant coefficients (here stabilization occurs as $\dim(V)\to\infty$) out of the homology of the wheeled bar construction of~$\calO$; this can further be used to prove uniform mixed representation stability for the homology of the positive part of that Lie algebra with constant coefficients. This result generalizes both the Loday--Quillen--Tsygan theorem on the homology of the Lie algebra of infinite matrices and the Fuchs stability theorem for the homology of the Lie algebra of vector fields. We also prove analogous theorems for the Lie algebras of derivations with constant and zero divergence, in which case one has to consider the wheeled bar construction of the wheeled completion of~$\calO$. Similarly to how cyclic homology of an algebra $A$ may be viewed as an additive version of the algebraic $K$-theory of $A$, our results hint at the additive $K$-theoretic nature of the wheeled bar construction. 
\end{abstract}

\maketitle

%\tableofcontents

\section{Introduction}

\subsection{Two classical results on Lie algebra homology}
The goal of this paper is to prove an ultimate common generalization of two results on the homology of infinite-dimensional Lie algebras. The first one is the well known result proved by Loday and Quillen \cite{MR0780077} and, independently, by Tsygan \cite{MR0695483} that concerns the homology of the Lie algebra $\mathfrak{gl}(A)=\varinjlim\mathfrak{gl}_n(A)$ for a unital associative algebra over 
$\mathbb{Q}$: it asserts that this homology is isomorphic to the free (super)commutative algebra on the shifted cyclic homology $HC_{\bullet-1}(A)$; there is also a similar result proved by Feigin and Tsygan \cite{MR0923136} and, independently, by Hanlon~\cite{MR0946439} for non-unital algebras. 
The second result is the Fuchs stability theorem~\cite{MR0725416} that concerns the homology of the Lie algebra $W_n$ of all derivations of $\mathbb{R}[x_1,\ldots,x_n]$ with coefficients in modules of tensor fields (of various degrees of covariance and contravariance), and the homology of its Lie subalgebra $L_1(n)$ consisting of derivations that send all generators to polynomials  vanishing to order two at the origin. In particular, that theorem asserts that, for fixed homological degree $k$ and internal degree $d$ and for all sufficiently large $n$, the $d$-homogeneous part of $H_k(L_1(n),\mathbb{R})$ vanishes for $k\ne d$. In modern language, the Lie algebra $L_1(n)$ is stably Koszul. The choice of the ground field in these two results being of no importance, we shall work over an arbitrary ground field $\k$ of zero characteristic.\\

\subsection{Derivations of free algebras}
While in classical terms $\mathfrak{gl}_n(A)$ is the Lie algebra of endomorphisms of the free $A$-module of rank $n$, and the Lie algebra $L_1(n)$ consists of certain derivations of the polynomial algebra in $n$ variables, a reader who is sufficiently used to operads can make the connection between them much more precise. If we view the algebra $A$ as an operad with only unary operations, the free $n$-generated algebra over that operad is the free $A$-module of rank $n$, and endomorphisms of that module are precisely derivations of that free algebra; also, the polynomial algebra is a free algebra for the operad $\uCom$ of unital commutative associative algebras, and the Lie algebra $L_1(n)$ consists of certain derivations of that algebra. Thus, the right common context is that of derivations of free algebras with a sufficiently large number of generators. In that direction, it is tempting to suggest that a common generalization of the two theorems mentioned above is given by the celebrated work of Kontsevich \cite{MR1247289,MR1341841}, who even characterizes the spirit of his argument as ``somewhere in between Gelfand--Fuchs computations and cyclic homology''. However, work of Kontsevich studies derivations that preserve some version of symplectic structure; operadically, this suggests that the underlying algebraic structure is controlled by a cyclic operad. Most of the subsequent work in this direction did indeed focus on the cyclic operad situation; as a very non-exhaustive list of examples we refer the reader to Conant and Vogtmann~\cite{MR2026331}, Ginzburg \cite{MR1839485}, Hamilton and Lazarev \cite{MR2395368}, and Mahajan \cite{Mah02,Mah03}. 

In the final stages of preparation of the paper, we became aware of the beautiful paper \cite{MR1671737} by  Kapranov, where a non-cyclic version of the result of Kontsevich is briefly mentioned; since studying derivations of free operadic algebras was not the main goal of that work, the result is stated without a proof, and is not correct as stated. However, an important contribution of that remark of Kapranov is the definition of an object that one would now call the wheeled completion of an operad, several years before wheeled operads and PROPs were formally introduced in~\cite{MR2483835,MR2641194}, and a hint that wheeled operads are related to stable homology of Lie algebras of derivations; for the computation of Fuchs mentioned above, this was also observed in \cite{MR2608525}. Making this philosophy into a precise mathematical statement is the main goal of this paper. Along the way, we propose a new equivalent definition of a wheeled operad which appears much easier to digest. For the reader who is not sufficiently familiar with the notion of a wheeled operad, it would be useful to recall at this point that this notion generalizes that of an operad; it keeps track of traces of actions of elements of operadic algebras in arbitrary finite-dimensional modules.

\subsection{Homology of Lie algebras of derivations}
Our first result concerns the homology of the Lie algebra $\Der(\calO(V))$ of all derivations of the free algebra $\calO(V)$ over an augmented operad $\calO$. This Lie algebra contains a subalgebra $\mathfrak{gl}(V)$ of derivations sending every generator to a linear combination of generators, and the augmentation of $\calO$ induces a Lie algebra augmentation $\Der(\calO(V))\twoheadrightarrow \mathfrak{gl}(V)$. The kernel of that augmentation is denoted $\Der^+(\calO(V))$. One can use the augmentation to make every $\mathfrak{gl}(V)$-module into a $\Der(\calO(V))$-module, where the action is via the augmentation. In the following statement, we view $\calO$ as a trivial wheeled operad (with all traces equal to zero).
\begin{itheorem}[{Th.~\ref{th:main1}}]
Let $\calO$ be an augmented operad, and $V$ a finite-dimensional vector space. The vector spaces 
 \[
H_\bullet(\Der^+(\calO(V)),\Hom(V^{\otimes \bullet_1},V^{\otimes \bullet_2}))^{\mathfrak{gl}(V)}
 \]
stabilize as $\dim(V)\to\infty$, with the stable value given by the coPROP completion of the wheeled cooperad $H_\bullet(\mathsf{B}^{\circlearrowleft}(\calO))$. This isomorphism is natural with respect to operad morphisms.
\end{itheorem}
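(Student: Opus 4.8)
The plan is to compute the relevant homology via the Chevalley--Eilenberg complex, to take $\mathfrak{gl}(V)$-invariants using classical invariant theory, and to recognize the resulting combinatorial object as the coPROP completion of $H_\bullet(\mathsf{B}^{\circlearrowleft}(\calO))$. First I would identify $\Der(\calO(V))\cong\Hom(V,\calO(V))$, under which the augmentation $\Der(\calO(V))\twoheadrightarrow\mathfrak{gl}(V)$ is the projection onto the linear part and $\Der^+(\calO(V))\cong\Hom(V,\overline{\calO(V)})$, where $\overline{\calO(V)}$ is the augmentation ideal of $\calO(V)$. Decomposing $\calO(V)=\bigoplus_n\calO(n)\otimes_{S_n}V^{\otimes n}$, each homogeneous piece of $\Der^+$ becomes $\calO(n)\otimes_{S_n}(V^*\otimes V^{\otimes n})$, carrying one covariant and $n$ contravariant copies of $V$. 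Since $\Hom(V^{\otimes\bullet_1},V^{\otimes\bullet_2})$ is a $\Der^+(\calO(V))$-module through the augmentation, hence with trivial $\Der^+$-action, its homology is computed by $\Hom(V^{\otimes\bullet_1},V^{\otimes\bullet_2})\otimes\Lambda^\bullet\Der^+(\calO(V))$ with the standard differential; the whole complex carries a residual $\mathfrak{gl}(V)$-action, and the object to compute is the homology of its invariant subcomplex, stabilized as $\dim(V)\to\infty$.

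Next I would feed each term into the first and second fundamental theorems of invariant theory for $\mathfrak{gl}(V)$. Every summand of $\Hom(V^{\otimes p},V^{\otimes q})\otimes\Lambda^k\Der^+$ is, as a $\mathfrak{gl}(V)$-module, a tensor product of copies of $V$ and $V^*$ decorated by elements of $\calO$ and by symmetric-group data: a wedge $D_1\wedge\cdots\wedge D_k$ of derivations contributes $k$ factors $V^*$, one input per derivation, and $\sum_i n_i$ factors $V$, while the coefficient contributes $p$ factors $V^*$ and $q$ factors $V$. The first fundamental theorem says the invariants vanish unless the numbers of $V$- and $V^*$-factors agree and are otherwise spanned by complete contractions, indexed by the bijections pairing each $V^*$ with a $V$; the second fundamental theorem guarantees that these contractions are linearly independent in the stable range, the only relations, of Cayley--Hamilton type, appearing when $\dim(V)$ drops below the number of contracted legs.

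I would then set up a graphical dictionary: a contraction is exactly a directed graph whose internal vertices are the derivations $D_i$, each carrying one incoming leg, $n_i$ outgoing legs and a decoration in $\calO(n_i)$, whose external legs are the $p$ inputs and $q$ outputs of the coefficient, and whose edges realize the chosen bijection. Crucially, such graphs may contain oriented cycles---an output of one derivation feeding back into the input of another along a loop---and these closed loops are precisely the traces that distinguish a wheeled operad from an ordinary one; this is the conceptual reason the answer is governed by the wheeled, rather than the cyclic, world. Under this dictionary the wedge degree matches the number of vertices, i.e.\ the weight grading of the bar construction, and the Chevalley--Eilenberg differential---which brackets two derivations by substituting one into a leg of the other---matches the wheeled bar differential that contracts an internal edge by operadic composition. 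Passing to homology identifies the invariant subcomplex with graphs decorated by $H_\bullet(\mathsf{B}^{\circlearrowleft}(\calO))$, and the $p$ inputs and $q$ outputs of the coefficient organize these decorated graphs, allowing arbitrary (including disconnected and higher-genus) shapes, into exactly the coPROP completion of the wheeled cooperad.

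Finally, stabilization follows because fixing the homological degree and the internal weight degree bounds the number of legs to be contracted, so for $\dim(V)$ large the second fundamental theorem produces no relations and the invariants are freely spanned by the graphs above; naturality in $\calO$ is inherited from the functoriality of each construction. The main obstacle I anticipate is twofold: matching the two differentials with consistent Koszul signs through the passage to invariants, where the wedge and the symmetric-group coinvariants interact delicately, and controlling the stable limit so that precisely the Cayley--Hamilton relations---and nothing else---are discarded, leaving the completion structure intact. Isolating the loops as the source of the wheeled structure, and checking that no spurious invariants survive, is the heart of the argument.
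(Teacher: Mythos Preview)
Your proposal is correct and follows essentially the same approach as the paper: identify the invariant Chevalley--Eilenberg complex via the fundamental theorems of $\mathfrak{gl}(V)$-invariant theory with a complex of decorated directed graphs, recognize this as the coPROP completion of $\mathsf{B}^{\circlearrowleft}(\calO)$, and match the differentials. Two small clarifications worth making precise when you write it up: first, each connected component has at most one oriented cycle (since every internal vertex has exactly one incoming edge), so ``higher-genus'' does not occur; second, the identification happens already at the chain level (invariant complex $\cong$ coPROP completion of the bar construction), and the crucial point in matching differentials is that contracting a loop edge corresponds neither to a Lie bracket nor to a nonzero term in the bar differential, since $\calO$ is viewed as a wheeled operad with zero trace map.
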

One may use Theorem~\ref{th:main1} to prove some vanishing results for the homology $H_\bullet(\Der^+(\calO(V)),\k)$, and, in principle, to 
determine explicitly the stable $\mathfrak{gl}(V)$-module decomposition of that homology. This shows that our result is indeed a common generalization of the Loday--Quillen--Tsygan theorem (as well as of its non-unital version of Feigin--Tsygan and Hanlon) and of the Fuchs stability theorem. 

In view of the previous result, knowing the wheeled bar homology of an operad is invaluable for computing the stable homology of Lie algebras of derivations of free algebras. This motivates our following result, allowing one to fully compute, in a certain class of cases, the wheeled bar homology of $\calO$ in terms of the derivative species $\partial(\calO)$. That latter species has two commuting structures, that of a right $\calO$-module, and that of a left $\Ass$-module. Let $\partial(\overline{\calO})_0$ be the (reduced) space of indecomposables for the right $\calO$-module structure, which naturally acquires a structure of a left $\Ass$-module, that is a twisted associative algebra (and hence its cyclic homology is defined).

\begin{itheorem}[{Th.~\ref{th:calchom}}] Let $\calO$ be an augmented operad for which $\partial(\calO)$ is free as a right $\calO$-module. We have 
 \[
H_\bullet(\mathsf{B}^{\circlearrowleft}(\calO))\cong H_\bullet(\mathsf{B}(\calO))\oplus HC_{\bullet-1}(\partial(\overline{\calO})_0).
 \]
\end{itheorem}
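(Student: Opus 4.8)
The plan is to exploit the grading of the wheeled bar construction by the first Betti number (genus) of its underlying graphs. The crucial structural observation is that, since $\calO$ is viewed as a wheeled operad with vanishing traces, the bar differential is dual only to the infinitesimal compositions of $\calO$: it contracts an internal edge joining two \emph{distinct} vertices (composing the two operations), and it never contracts a self-loop, since that would be dual to a trace, which is zero. Contracting an edge between distinct vertices changes neither the number of connected components nor the first Betti number, so the differential preserves the genus and
\[
H_\bullet(\mathsf{B}^{\circlearrowleft}(\calO))\cong\bigoplus_{g\geq 0}H_\bullet(\mathsf{B}^{\circlearrowleft}(\calO)_g).
\]
Moreover every vertex carries a single output, so in a connected graph the number of internal edges is at most the number of vertices; hence a connected graph is either a tree ($g=0$) or carries exactly one cycle ($g=1$), and no higher genus occurs in the wheeled cooperad. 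This makes the direct-sum shape of the statement manifest and reduces everything to the two cases $g=0$ and $g=1$.

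First I would dispose of the genus-zero summand: trees with vertices labelled by $s\overline{\calO}$ equipped with the edge-contraction differential are exactly the ordinary bar construction, so $H_\bullet(\mathsf{B}^{\circlearrowleft}(\calO)_0)\cong H_\bullet(\mathsf{B}(\calO))$, which supplies the first term.

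The substance is the genus-one summand. A genus-one graph consists of a unique oriented cycle along which each vertex contributes one input and its output, with rooted trees hanging off the remaining inputs. Marking the incoming loop-input identifies a loop-vertex-with-its-trees as an element of the right $\calO$-module $\partial(\calO)$ acted on at the unmarked inputs, while the loop edges compose adjacent such elements by plugging output into marked input, which is precisely the left $\Ass$-module (twisted associative algebra) structure on $\partial(\calO)$. The differential splits into a part contracting tree edges (the reduced bar differential of the right $\calO$-module) and a part contracting loop edges (the associative multiplication); filtering by the length of the cycle, which the former preserves and the latter strictly decreases, yields a spectral sequence whose first page is computed by the tree-contraction differential. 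Here the hypothesis enters: since $\partial(\calO)$ is free as a right $\calO$-module, its reduced bar complex is acyclic away from the generators, so each loop-vertex collapses onto the indecomposables $\partial(\overline{\calO})_0$. On the next page the surviving loop-differential is exactly the cyclic bar differential of the twisted associative algebra $\partial(\overline{\calO})_0$, whose homology is its cyclic homology; the single-vertex self-loop terminating the cycle carries zero outgoing differential (again because traces vanish), which is what turns the complex into the cyclic rather than merely Hochschild complex. The shift is a matter of bookkeeping: a cycle of $m$ vertices carries $m$ suspensions and sits in homological degree $m$, while as a chain in the cyclic bar complex of $\partial(\overline{\calO})_0$ it has $m$ tensor factors and thus cyclic degree $m-1$; this gives $H_\bullet(\mathsf{B}^{\circlearrowleft}(\calO)_1)\cong HC_{\bullet-1}(\partial(\overline{\calO})_0)$, and combining the two summands proves the theorem. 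Naturality in $\calO$ is visible at each step, as all the constructions are functorial.

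The hard part will be the genus-one analysis rather than any higher-genus vanishing, which is automatic from the out-degree-one constraint. Specifically, I expect the main obstacle to be checking that, after the freeness reduction, the loop-contraction differential assembles into the \emph{full} cyclic differential of $\partial(\overline{\calO})_0$ with the correct cyclic symmetry and signs, and that the spectral sequence converges in the twisted (species) setting with no surviving higher differentials or extension problems. Getting the suspension conventions to reproduce precisely the $HC_{\bullet-1}$ shift, and confirming that the vanishing of traces yields Connes' cyclic complex rather than the Hochschild complex, are the delicate points requiring careful verification.
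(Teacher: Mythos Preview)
Your proposal is correct and follows essentially the same route as the paper: split into the tree part (tautologically the ordinary bar construction) and the wheel part, then on the wheel part filter by the cycle length so that the associated graded carries only the tree-edge contraction, invoke freeness of $\partial(\calO)$ as a right $\calO$-module to collapse each loop-vertex to $\partial(\overline{\calO})_0$, and identify the remaining $d_1$ with the cyclic differential, with degeneration at $E^2$ because $E^1$ is concentrated in complementary degree zero. The only cosmetic difference is that you phrase the tree/wheel dichotomy as a genus decomposition, whereas the paper uses the built-in splitting $\mathsf{B}^{\circlearrowleft}(\calO)_\o\oplus\mathsf{B}^{\circlearrowleft}(\calO)_\w$ of the wheeled bar construction (with the cul-de-sac summand vanishing because $\calO_\w=0$); your worries about convergence, higher differentials, and extensions all dissolve for the reason you anticipate, since $E^1$ lives entirely in $q=0$.
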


Note that freeness of $\partial(\calO)$ as a right $\calO$-module is precisely what controls the PBW property of multiplicative universal enveloping algebras of $\calO$-algebras \cite{MR4381941}.

\subsection{Homology of Lie algebras of divergence-free derivations}
If one uses the terminology of wheeled operads, the paper of Kapranov \cite{MR1671737} mentioned above asserts that the wheeled completion of the operadic bar construction plays a certain role in computing stable homology of Lie algebras of derivations of free algebras, but then the wrong claim about the homology of that wheeled completion is made. The problem is that taking the wheeled completion does not commute with computing homology and, in particular, Kapranov's conclusion that, for a Koszul operad $\calO$, the answer can be expressed in terms of the wheeled completion of the Koszul dual cooperad $\calO^{\ac}$, is only true for operads sufficiently similar to $\Com$, and is not true for most ``interesting'' operads (for instance, it is definitely not true for the Lie operad). In fact, if we consider an associative algebra $A$ as an operad concentrated in arity one, the theorem of Loday--Quillen--Tsygan shows that the commutator quotient $|A|:=A/[A,A]$ appears in the result, and if one hopes for some statement with a flavour of Koszul duality, one should at least find a way to remove that ``homotopically badly behaved'' piece. In the framework of Loday--Quillen--Tsygan, this corresponds to replacing $\mathfrak{gl}(A)$ with $\mathfrak{sl}(A)$, which is accomplished by considering matrices whose trace vanishes in the commutator quotient $A/[A,A]$. As we shall see, this gives one a very useful hint on how to proceed. \\ 

We have already seen that wheeled operads are relevant in this context, and, in the theory of wheeled operads, the wheeled completion $\calO^{\circlearrowleft}$  of an operad $\calO$ is relevant. What does the bar construction of that wheeled operad tell us about derivations of free $\calO$-algebras? It turns out to be possible to find a meaningful answer to this question involving the notion of divergence of a derivation of the free algebra. For polynomial algebras, derivations are polynomial vector fields, so the notion of divergence one naturally encounters is recognizable from differential geometry, while for $\mathfrak{gl}_n(A)$ it is natural to calculate the divergence as the image of the trace in the commutator quotient. A general notion that is suggested by these two particular cases  was recently defined by Powell~\cite{Powell21} for free algebras over arbitrary operads; previously, it was known some years in the cases of free Lie algebras (see, for example, Enomoto and Satoh~\cite{MR2846914} and Satoh~\cite{MR2864772}) and in the case of free associative algebras (see, for example, Alekseev, Kawazumi, Kuno and Naef \cite{MR3758425}). Below, we shall give an alternative presentation of that definition which we find much easier to digest and to use in our context.\\ 

Using the notion of divergence, we define more infinite-dimensional Lie algebras that will be relevant for our second main result. Specifically, we shall consider the Lie algebra $\SDer(\calO(V))$ of all derivations of the free algebra  with zero divergence; since we assume our operad  augmented, there is a Lie algebra map $\SDer(\calO(V))\to \mathfrak{gl}(V)$, whose kernel is denoted $\SDer^+(\calO(V))$. Various relationships between all the Lie algebras that have been defined so far are summarized by the diagram
 \[
 \xymatrix@M=6pt{
\SDer^+(\calO(V))\ar@{^{(}->}^{}[r]\ar@{^{(}->}^{}[d]& \Der^+(\calO(V))\ar@{^{(}->}^{}[d]&\\
\SDer(\calO(V))\ar@{^{(}->}^{}[r] &\Der(\calO(V))\ar@{->>}^{}[r]&\mathfrak{gl}(V)
 } 
 \]

Our second main result is as follows.

\begin{itheorem}[{Th.~\ref{th:main2}}]
Let $\calO$ be an augmented operad, and $V$ a finite-dimensional vector space. The vector spaces
 \[
H_\bullet(\SDer^+(\calO(V)),\Hom(V^{\otimes \bullet_1},V^{\otimes \bullet_2}))^{\mathfrak{gl}(V)}
 \] 
stabilize as $\dim(V)\to\infty$, with the stable value given by the coPROP completion of the wheeled cooperad $H_\bullet(\mathsf{B}^{\circlearrowleft}(\calO^{\circlearrowleft}))$. This isomorphism is natural with respect to operad morphisms.
\end{itheorem}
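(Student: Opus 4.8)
The plan is to run the argument of Theorem~\ref{th:main1} once more, now for the subalgebra $\SDer^+(\calO(V))$, and to track precisely how the divergence-free condition reshapes the resulting wheeled complex. The starting point is identical: I would write down the Chevalley--Eilenberg complex computing $H_\bullet(\SDer^+(\calO(V)),\Hom(V^{\otimes\bullet_1},V^{\otimes\bullet_2}))$, pass to $\mathfrak{gl}(V)$-invariants, and invoke the first fundamental theorem of invariant theory for $GL(V)$ to identify, in the range $\dim(V)\to\infty$, these invariants with a graph-like complex whose edges record contractions of tensor indices. Contractions of an output index against an input index are exactly the wheels, and this is the channel through which a wheeled structure enters the stable answer. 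For $\Der^+$ this channel produces $\mathsf{B}^{\circlearrowleft}(\calO)$; the entire task is to understand the effect of divergence-freeness on it.

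The key new input is the presentation of divergence recalled above, read as a trace: under the identification $\Der(\calO(V))\cong\Hom(V,\calO(V))$, the divergence $\Div$ is obtained by contracting the covariant $V^*$-factor against an input slot of $\calO(V)$, which is precisely a wheel contraction. Divergence-freeness is therefore a vanishing-of-trace condition on the generators of the Lie algebra, and the wheeled completion $\calO^{\circlearrowleft}$ enters because it is exactly the wheeled operad obtained by adjoining such contracted operations to $\calO$. The precise statement I would prove is that, after stabilization, the $\mathfrak{gl}(V)$-invariant Chevalley--Eilenberg complex of $\SDer^+(\calO(V))$ is identified \emph{at the chain level} with the wheeled bar construction $\mathsf{B}^{\circlearrowleft}(\calO^{\circlearrowleft})$, after which one simply takes homology. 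Concretely, I would extract the contracted operations of $\calO^{\circlearrowleft}$ from the interplay between the divergence-free constraint and the trace contractions produced by invariant theory, and then match the Chevalley--Eilenberg differential with the wheeled bar differential of $\calO^{\circlearrowleft}$ term by term.

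The coPROP completion and the naturality in $\calO$ would be inherited essentially verbatim from Theorem~\ref{th:main1}: the bivariant coefficients $\Hom(V^{\otimes\bullet_1},V^{\otimes\bullet_2})$ supply the same input/output bookkeeping, which is insensitive to whether one works with all derivations or only the divergence-free ones, and every construction in sight is manifestly functorial in operad morphisms.

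The hard part, and the reason the theorem is not a formal consequence of Theorem~\ref{th:main1}, is exactly the passage from $\calO$ to $\calO^{\circlearrowleft}$. As emphasized in the introduction, taking the wheeled completion does not commute with homology: the homology of $\mathsf{B}^{\circlearrowleft}(\calO^{\circlearrowleft})$ differs genuinely from the wheeled completion of $H_\bullet(\mathsf{B}^{\circlearrowleft}(\calO))$ and can even be smaller, as the passage from $\mathfrak{gl}$ to $\mathfrak{sl}$ in the Loday--Quillen--Tsygan case already demonstrates. One therefore cannot substitute $\calO^{\circlearrowleft}$ into the conclusion of Theorem~\ref{th:main1}; the identification must be secured before passing to homology. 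I expect this to hinge on a filtration of the Chevalley--Eilenberg complex by the number of wheels, that is by trace weight, together with the associated spectral sequence: the contracted operations of $\calO^{\circlearrowleft}$ should already be visible on an early page, and both convergence and the identification of invariants with coinvariants in the stable range will use the characteristic-zero hypothesis through the semisimplicity of the relevant representation theory of $GL(V)$.
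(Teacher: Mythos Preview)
Your proposal contains a genuine gap: the claimed \emph{chain-level} identification of $C_\bullet(\SDer^+(\calO(V)))^{\mathfrak{gl}(V)}$ with $\mathsf{B}^{\circlearrowleft}(\calO^{\circlearrowleft})$ cannot hold as stated. The wheeled bar construction $\mathsf{B}^{\circlearrowleft}(\calO^{\circlearrowleft})=\calT^{\circlearrowleft}(s\overline{\calO}\oplus\overline{|\partial(\calO)|})$ has cul-de-sac vertices labelled by $\overline{|\partial(\calO)|}$ sitting in homological degree~$0$, whereas every generator of $S(s\,\SDer^+(\calO(V)))$ carries degree~$1$ from the suspension; so already the bigraded pieces do not match. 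More structurally, $\SDer^+(\calO(V))$ is defined as the \emph{kernel} of the divergence, and a kernel of a map between Schur-type functors is not itself a sum of tensor powers of $V$ and $V^*$ in a way that lets the First Fundamental Theorem produce a clean graph basis for its symmetric powers. Your sentence ``I would extract the contracted operations of $\calO^{\circlearrowleft}$ from the interplay between the divergence-free constraint and the trace contractions'' is exactly the step that does not go through directly, and the vague filtration by ``number of wheels'' does not address it.

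The paper circumvents this by \emph{not} working with $\SDer^+$ on the chain level at all. It introduces the auxiliary dg Lie algebra $\Der^+(\calO(V))\ltimes_{\Div}\overline{|\partial(\calO)(V)|}$, where the extra summand $s^{-1}\overline{|\partial(\calO)(V)|}$ is placed in degree $-1$ and the differential is $D\mapsto s^{-1}\Div(D)$. The Chevalley--Eilenberg complex of \emph{this} object is $S\bigl(s\,\Der^+(\calO(V))\oplus\overline{|\partial(\calO)(V)|}\bigr)$, which \emph{is} a sum of tensor powers of $V,V^*$, so invariant theory yields a graph complex with corollas (from $\Der^+$) and cul-de-sac vertices (from $\overline{|\partial(\calO)|}$)---precisely $\mathsf{B}^{\circlearrowleft}(\calO^{\circlearrowleft})$, with the internal differential supplying the loop-contraction piece of the bar differential (Theorem~\ref{th:graphcx2}). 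A separate Koszul-type lemma (Lemma~\ref{lm:derived-div}) then shows that whenever the cocycle $\phi$ is surjective one has $H_\bullet(\mathfrak{g}\ltimes_\phi M,\k)\cong H_\bullet(\ker\phi,\k)$; stable surjectivity of the divergence completes the bridge to $\SDer^+$. In short, the missing idea is to trade the constraint $\Div=0$ for an acyclic extension by $\overline{|\partial(\calO)(V)|}$, which is what makes the wheeled completion $\calO^{\circlearrowleft}$ appear on the nose rather than through an unspecified ``interplay''.
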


As above, one can use this result to prove some vanishing results for the homology $H_\bullet(\SDer^+(\calO(V)),\k)$, and, in principle, to 
determine explicitly the stable $\mathfrak{gl}(V)$-module decomposition of that homology. The advantage of this result is in the fact that, unlike what we saw in the case of all derivations, the wheeled completion $\calO^{\circlearrowleft}$ of a Koszul operad $\calO$ does often (though not always) happen to be Koszul as a wheeled operad. Our theorem implies that, if the wheeled operad $\calO^{\circlearrowleft}$ is Koszul, then the Lie algebra $\SDer^+(\calO(V))$ is Koszul in all weights less than $\frac13\dim(V)$; this is reminiscent of a similar result for Torelli Lie algebras proved by Felder, Naef and Willwacher \cite{MR4549961} and, independently, by Kupers and Randal-Williams \cite{MR4575357}. Note that it follows from the results of Powell~\cite{Powell21} that the stable homology of $\SDer^+(\calO(V))$ in degree one is concentrated in weight one for any binary operad $\calO$; our work allows one to prove a generalization of this statement concerning Koszulness in bounded weights. Let us also remark that the idea of~\cite{Powell21} to use functors on the category $\mathcal{S}(R)$ of split monomorphisms between finite rank free $R$-modules may be useful in our context: for instance, the notion of $k$-torsion in that category captures the number of extra variables one needs to adjoin for various isomorphisms to hold. This will be explored elsewhere.

\subsection*{Structure of the paper} This paper is organized as follows. In Section \ref{sec:recoll}, we recall the necessary definitions and terminology concerning twisted associative algebras, operads, universal multiplicative envelopes, and K\"ahler differentials. In Section \ref{sec:div-wheeled}, we discuss the notion divergence of a derivation of a free operadic algebra, and a new definition of a wheeled operad. In Section \ref{sec:stable-homology}, we prove our main results and discuss some of their applications. In Section \ref{sec:perspectives}, we discuss further consequences of our results and outline some perspectives and further directions. 

\section{Recollections}\label{sec:recoll}

In this section, we recall some standard background information on twisted associative algebras, operads, and operadic algebras, as well as universal multiplicative envelopes and K\"ahler differentials of operadic algebras; the reader is invited to consult the monographs \cite{MR2724388,MR1629341} and the original paper \cite{MR633783} for more information on the notion of species, the monograph \cite{MR2954392} for more details on operads and operadic algebras, and the monograph \cite{MR2494775} for more details on modules over operads, and a functorial view of universal multiplicative envelopes and K\"ahler differentials.\\

All vector spaces in this paper are defined over a field $\k$ of characteristic zero. For a finite set $I$ and a family of vector spaces $\{V_i\}_{i\in I}$, the unordered tensor product of these vector spaces is defined by 
 \[
\bigotimes_{i\in I} V_i:=\left(\bigoplus_{(i_1,\ldots,i_n) \text{ a total order on } I} V_{i_1}\otimes\cdots\otimes V_{i_n}\right)_{\Aut(I)}.
 \]
We denote by $S_n$ the symmetry group $\Aut(\{1,\ldots,n\})$.  
All chain complexes are graded homologically, with the differential of degree~$-1$. To handle suspensions of chain complexes, we introduce a formal symbol~$s$ of degree~$1$, and define, for a graded vector space~$V$, its suspension $sV$ as $\k s\otimes V$.

\subsection{Twisted associative algebras and operads}

A \emph{linear species} is a contravariant functor from the groupoid $\Fin$ of finite sets (the category whose objects are finite sets and whose morphisms are bijections) to the category of vector spaces. Sometimes, a ``skeletal definition'' is preferable: the category of linear species is equivalent to the category of symmetric sequences $\{\calS(n)\}_{n\ge 0}$, where each $\calS(n)$ is a right $S_n$-module, a morphism between the sequences $\calS_1$ and $\calS_2$ in this category is a sequence of $S_n$-equivariant maps $f_n\colon \calS_1(n)\to\calS_2(n)$. While this definition may seem more appealing, the functorial definition simplifies many definitions and proofs in a way that totally justifies the level of abstraction.\\

The \emph{derivative} $\partial(\calS)$ of a species $\calS$ is defined by the formula
 \[
\partial(\calS)(I):=\calS(I\sqcup\{\star\}).
 \]
The \emph{Cauchy product} of two linear species $\calS_1$ and $\calS_2$ is defined by the formula
 \[
(\calS_1\otimes\calS_2)(I):=\bigoplus_{I=I_1\sqcup I_2}\calS_1(I_1)\otimes\calS_2(I_2).
 \]
Equipped with this product, the category of linear species becomes a symmetric monoidal category, with the monoidal unit being the species $\mathbf{1}$ supported at the empty set with the value $\mathbf{1}(\varnothing)=\k$. A \emph{twisted associative algebra} is a monoid in this monoidal category.

The \emph{composition product} of two linear species $\calS_1$ and $\calS_2$ is compactly expressed via the Cauchy product as
 \[
\calS_1\circ\calS_2:=\bigoplus_{n\ge 0}\calS_1(\{1,\ldots,n\})\otimes_{\k S_n}\calS_2^{\otimes n}, 
 \]
that is, if one unwraps the definitions, 
 \[
(\calS_1\circ\calS_2)(I)
=\bigoplus_{n\ge 0}\calS_1(\{1,\ldots,n\})\otimes_{\k S_n}\left(\bigoplus_{I=I_1\sqcup \cdots\sqcup I_n}\calS_2(I_1)\otimes\cdots\otimes \calS_2(I_n)\right).
 \]
Equipped with this product, the category of linear species becomes a (very non-symmetric) monoidal category, with the monoidal unit being the species $\mathbbold{1}$ which vanishes on a finite set $I$ unless $|I|=1$, and whose value on $I=\{a\}$ is given by the one-dimensional vector space $\k a$. A \emph{(symmetric) operad} is a monoid in that monoidal category, that is a triple $(\calO,\gamma,\eta)$, where $\gamma\colon\calO\circ\calO\to\calO$ is the product and $\eta\colon\mathbbold{1}\to\calO$ is the unit, which satisfy the usual axioms of a monoid in a monoidal category \cite{MR0354798}. This defines algebraic (that is, $\k$-linear) operads, an object introduced in the paper of Artamonov \cite{MR0237408} under the slightly obscure name ``clones of multilinear operations'', shortly before the appearance of the term ``operad'' in its topological glory \cite{MR2177746,MR0420610}. 

It is also possible to encode operads using the \emph{partial} (or \emph{infinitesimal}) compositions 
 \[
\circ\star\colon\partial(\calO)\otimes\calO\to\calO.
 \]
Associativity of operadic composition translates into the requirement that these operations must satisfy the ``consecutive'' and the ``parallel'' axioms \cite{MR2954392}. Functorially, the consecutive axiom is given by the commutative diagram
 \[
 \xymatrix@M=6pt{
\partial(\calO)\otimes\partial(\calO)\otimes\calO\ar@{->}^{\id\otimes \circ_\star}[rr] \ar@{->}_{\mu\otimes \id}[d] & & \partial(\calO)\otimes\calO \ar@{->}^{\circ_\star}[d]  \\ 
\partial(\calO)\otimes\calO\ar@{->}_{\circ_\star}[rr]  & & \calO      
 } 
 \]
and the parallel axiom is given by the commutative diagram 
 \[
 \xymatrix@M=6pt{
\partial(\partial(\calO))\otimes\calO\otimes\calO\ar@{->}^{(\rho\otimes\id)(\id\otimes \sigma) }[rr] \ar@{->}_{\rho\otimes \id}[d] & & \partial(\calO)\otimes\calO \ar@{->}^{\circ_{\star_1}}[d]  \\ 
\partial(\calO)\otimes\calO\ar@{->}_{\circ_{\star_2}}[rr]  & & \calO      
 } .
 \]
Here structure morphisms
 \[
\mu\colon\partial(\calO)\otimes\partial(\calO)\to\partial(\calO) \quad\text{ and }\quad \rho\colon\partial(\partial(\calO))\otimes\calO\to\partial(\calO)
 \]
are obtained by applying $\partial$ to $\circ_\star$, and $\sigma\colon\calO\otimes\calO\cong\calO\otimes\calO$ is the symmetry isomorphism of the Cauchy product.

The prototypical example of an operad is given by the \emph{endomorphism operad} $\End_V$ of a vector space~$V$. It is given by $\End_V(I)=\Hom_{\k}(V^{\otimes I},V)$,
with the composition given by the usual composition of multilinear maps. In general, an operad corresponds to a $\k$-linear algebraic structure: for a class $\sfC$ of $\k$-linear algebras, one can define an operad $\calO_\sfC$ where $\calO_\sfC(I)$ consists of all multilinear operations with inputs indexed by $I$ that one can define in terms of the structure operations of $\sfC$. 

Unless otherwise specified, all operads $\calO$ we consider will be \emph{reduced} (that is, $\calO(\varnothing)=0$) and \emph{augmented} (that is, equipped with a map of operads $\epsilon\colon\calO\to\mathbbold{1}$ which is a left inverse to the monoid unit $\eta$); we shall denote by $\overline{\calO}$ the augmentation ideal of $\calO$ (the kernel of the augmentation). The reader may decide to only think of operads generated by finitely many binary operations, though our results hold in more general situations; in particular, allowing oneself unary operations is essential to recover the Loday--Quillen--Tsygan theorem mentioned in the introduction. 
A \emph{weight graded} operad is an operad $\calO$ whose underlying species is decomposed as a direct sum of species $\calO^{i}$, $i\in\mathbb{Z}$ so that applying any structure operation to elements of certain weights produces an element whose weight is the sum of the weights. One common example of a weight grading is given by assigning to an element of arity $n$ weight $n-1$. It is common to use the terminology ``weight grading'' to emphasize distinction from ``homological degree'': the latter also adds up under structure operations, but creates Koszul signs when exchanging elements of odd degrees, while weight gradings do not create any signs.  

Thinking of twisted associative algebras and of operads as monoids automatically leads to definitions of (left, right, and two-sided) modules using the corresponding monoidal structure. One can also use operadic modules to re-define twisted associative algebras: a twisted associative algebra is the same as a left module over the operad of associative algebras. Moreover, one can check that the Cauchy product makes the category of right modules over an operad $\calO$ into a symmetric monoidal category. 

 It is known \cite{MR2494775} that the derivative species $\partial(\calO)$ has two structures: of a twisted associative algebra and of a right $\calO$-module; in our notation, the former is given by the morphism $\mu$ and the latter by the morphism $\rho$. (Strictly speaking, $\rho$ gives an infinitesimal right module structure, but for unital operads, there is no difference between the notion of a right module and an infinitesimal right module.) Moreover, these two structures commute: $\partial(\calO)$ is a twisted associative algebra in the symmetric monoidal category of right $\calO$-modules, or, equivalently, an $\Ass$--$\calO$-bimodule.

\subsection{Monad of trees}
It is also important that we can view operads as algebras over a monad. Let us recall that viewpoint which will be useful for us when we generalize to wheeled operads later. We start by defining an endofunctor $\calT$ of the category of species by the following formula :
 \[
\calT(\calF)(I):=\bigoplus_{\substack{\tau \text{ a rooted tree, }\\ \mathsf{Leaves}(\tau)=I}}\bigotimes_{v\in\mathsf{Vertices}(\tau)}\calF(\mathrm{in}(v)),
 \] 
so in plain words, we decorate each internal vertex $v$ of $\tau$ with a label from $\calF$, taking the component of $\calF$ indexed by the set of incoming edges of $v$. For example, 
\[  \xygraph{ 
!{<0cm,0cm>;<1cm,0cm>:<0cm,0.8cm>::} 
!~-{@{-}@[|(2.5)]}
!{(0,0) }*+[o][F-]{\phantom{\alpha}}="a"
!{(0,-0.8) }*+{}{\phantom{\alpha}}="f"
!{(0.5,1) }*+[o][F-]{\phantom{\alpha}}="b"
!{(-0.2,1) }*+{5}="c"
!{(-0.7,1) }*+[o][F-]{\phantom{\alpha}}="d"
!{(1.2,1) }*+{2}="e"
!{(0.1,2) }*+{3}="g"
!{(1,2) }*+{4}="h"
!{(-1,2) }*+{1}="k"
!{(-0.4,2) }*+{6}="l"
"k" - "d"
"l" - "d"
"g" - "b"
"h" - "b"
"b" - "a" 
"c" - "a"
"e" - "a"
"d" - "a"
"a" - "f"
}
\]
is a typical rooted tree with set of leaves $\{1,2,\ldots,6\}$, and the summand corresponding to that tree is $\calF(\{1,6\})\otimes\calF(\{3,4\})\otimes\calF(\{2,5,\star_1,\star_2\})$, where $\star_1$ and $\star_2$ correspond to the points of grafting of the two subtrees.
It is easy to see that $\calT$ is in fact a monad: the natural transformation $\calT\calT\to\calT$ comes from the fact that a rooted tree with tiny rooted trees inserted in its internal  vertices can be viewed as a rooted tree if we forget this nested structure of insertions, and the natural transformation $\mathsf{1}\to\calT$ comes from considering rooted trees with one internal vertex (``corollas''). We call this monad the \emph{monad of rooted trees}. An operad is an algebra over $\calT$, that is a species $\calO$ equipped with a structure map $\calT(\calO)\to \calO$. If we keep the intuition of multilinear operations, the three viewpoints we discussed are as follows: the monoid viewpoint corresponds to composing operations along two-level rooted trees, the partial composition viewpoint corresponds to composing operations along rooted trees with two internal vertices, and the monad viewpoint corresponds to composing operations along arbitrary rooted trees.

\subsection{PROPs and coPROPs}
We shall also use PROPs, which are versions of operads with several inputs and several outputs (though introduced before operads by Adams and Mac Lane \cite{MR0171826}), so that one should think of bi-species: functors from $\Fin^{op}\times\Fin$ to the category of vector spaces. However, the definition can be given more quickly: a PROP is a  $\k$-linear symmetric monoidal category $\calP$ whose objects are natural numbers, and the monoidal structure is addition. Then the symmetry isomorphisms act on $n=1+1+\cdots+1$, and the hom-spaces $\calP(m,n)$ naturally have a left action of $S_n$ and a commuting right action of $S_m$, which is essentially the same as a bi-species. Moreover, there are 
``vertical'' compositions 
 \[
 \calP(m,n)\otimes\calP(k,m)\to\calP(k,n)
 \] 
that are just compositions of morphisms in the category $\calP$ and ``horizontal'' compositions 
 \[
\calP(m,n)\otimes\calP(m',n')\to\calP(m+m',n+n')
 \] 
that correspond to the tensor product of morphisms. These are associative and satisfy the ``interchange law'' between them. 

To each operad $\calO$, one can associate a PROP $\calP(\calO)$ whose components are given by the formula
 \[
(\calP(\calO))(m,n):=\calO^{\otimes n}(m),
 \]
where $\calO^{\otimes n}$ is the $n$-th tensor power of the species $\calO$ with respect to the Cauchy product, the action of $S_n$ comes from the symmetry isomorphisms of the Cauchy product, the vertical composition comes from the composition in $\calO$, and the horizontal composition is the product in the tensor algebra $\calO^{\otimes\bullet}$. This PROP is often referred to as the PROP completion of the operad $\calO$; in fact, it appears in \cite{MR0236922} under the name ``category of operators in standard form'' as an easily identifiable precursor of the notion of an operad. 

There is also a notion of a cooperad, where all arrows are reversed, and the notion of a coPROP; the coPROP completion of a cooperad $\calC$ is denoted $\calP^c(\calC)$. 

\subsection{Wheeled bar construction and Koszulness}

Given an augmented operad $\calO$, its bar construction $\mathsf{B}(\calO)$ is $\calT(s\overline{\calO})$ equipped with the differential given by the sum of all ways of collapsing edges of the tree and using the operad structure maps on the labels of the vertices that are identified by such collapses. Moreover, one can talk about cooperads by reversing all arrows in the respective definitions; in that language, the bar construction of an operad $\calO$ is the cofree conilpotent cooperad on $s\overline{\calO}$ equipped with the unique coderivation extending the maps induced by the structure maps of the operad $\calO$. 

The bar construction can be extended to dg operads, making it one half of a Quillen adjunction between augmented differential graded operads and conilpotent coaugmented differential graded cooperads; the other half of that adjunction is given by the cobar construction for cooperads, which is defined analogously. 

A weight graded operad with all generators of weight one is said to be \emph{Koszul} if the homology of its bar construction is concentrated on the diagonal (weight equal to homological degree).

\subsection{Algebras over operads}

A vector space $V$ can be considered naturally as a species supported at the empty set. If $\calS$ is another species, we have
 \[
\calS\circ V:=\bigoplus_{n\ge 0}\calS(\{1,\ldots,n\})\otimes_{\k S_n} V^{\otimes n}. 
 \]
Classically, the right hand side of this formula is called the Schur functor associated to $\calS$ and is denoted $\calS(V)$. 

The main reason to study operads is because of their algebras. For an operad~$\calO$, an algebra over $\calO$ is a vector space $A$ together with a structure of a left $\calO$-module on the species $A$. One can also check that a structure of an $\calO$-algebra on $A$ is the same as a morphism of operads $\calO\to\End_A$; in this sense, $\calO$-algebras are representations of  $\calO$. In this paper, we shall only consider free operadic algebras; the free $\calO$-algebra over $\calO$ generated by a vector space $V$ is the vector space $\calO(V)=\calO\circ V$, equipped with the obvious left module structure.

\subsection{Derivations}
Recall that a derivation of an $\calO$-algebra $A$ is a linear map $D\colon A\to A$ satisfying 
 \[
D(\mu(a_1,\ldots,a_n))=\sum_i \mu(a_1,\ldots,D(a_i),\ldots,a_n)
 \]
for every structure map $\mu$ of the operad $\calO$. Informally, derivations are ``infinitesimal automorphisms''; one way to make some sense of this statement is to think of the automorphism $\mathrm{Id}+\epsilon D$ of $A[\epsilon]/(\epsilon^2)$. Note that for an operation $\mu$ of arity one, the derivation property means that $D$ commutes with $\mu$, so a derivation of an $\calO$-algebra $A$ is in particular an endomorphism of the $\calO(1)$-module structure on $A$. The set of derivations of $A$, denoted $\Der(A)$, has an obvious structure of a vector space, and moreover of a Lie algebra with respect to the Lie bracket $[D,D']=D\circ D'-D'\circ D$. 

The Lie algebra $\Der(\calO(V))$ has a Lie subalgebra $\mathfrak{gl}(V)$ consisting of derivations for which the image of every element of $V$ is in $V$. For an augmented operad, one can use the augmentation to define a surjective Lie algebra morphism $\Der(\calO(V))\twoheadrightarrow \mathfrak{gl}(V)$; we shall denote its kernel $\Der^+(\calO(V))$. It is also worth noting that the Lie algebra $\Der(\calO(V))$ has an obvious integer grading: a derivation has weight $w$ if it sends each generator $v\in V$ into a combination of elements obtained by applying elements of $\calO(w+1)$ to elements of $V$; the weight is non-negative for a reduced operad, and may assume value $-1$ otherwise. Moreover, if $\calO$ is connected (all unary operations are proportional to the identity), $\Der^+(\calO(V))$ consists of derivations of positive weight, but in general it also contains some elements of weight zero. 

It is clear that for a free algebra $\calO(V)$ any linear map $V\to\calO(V)$ extends to a unique $\calO$-algebra derivation; in other words, we have a canonical vector space isomorphism 
 \[
\Der(\calO(V))\cong \Hom(V,\calO(V)),
 \] 
and we shall denote by $\imath_D$ the element of $\Hom(V,\calO(V))$ corresponding to $D$. This isomorphism allows us to see that for free algebras, the Lie bracket of derivations arises from a pre-Lie algebra structure on $\Der(\calO(V))$: we can define, for two derivations $D$, $D'$, the pre-Lie product $D\triangleleft D'$ to be given by
 \[
\imath_{D\triangleleft D'}(v):=D(\imath_{D'}(v)).
 \]
\begin{remark}
Sometimes, it is useful that one can view the pre-Lie algebra structure on $\Der(\calO(V))$ as follows. There is an operad with the $n$-th component $\Hom(V,V^{\otimes n})\otimes\calO(I)$, the Hadamard product of the ``coendomorphism operad'' and $\calO$; this operad has an associated pre-Lie algebra \cite{MR2954392}, and we are considering the subalgebra of that pre-Lie algebra made of invariants of actions of symmetric groups on the components of the operad.
\end{remark} 

\subsection{Universal multiplicative envelopes and K\"ahler differentials}
Let $A$ be an $\calO$-algebra. Informally, the \emph{universal multiplicative enveloping algebra} $U_{\calO}(A)$ encodes all ``actions of elements of $A$ on a general (bi)module''. Formally, we define it via a relative composite product construction \cite{MR2494775}:
 \[
U_{\calO}(A)\cong \partial(\calO)\circ_{\calO} A,
 \]
with the associative algebra structure induced by the left $\Ass$-module structure of the $\Ass$--$\calO$-bimodule $\partial(\calO)$. For the universal multiplicative enveloping algebra of a free $\calO$-algebra, we have an associative algebra isomorphism
 \[
U_{\calO}(\calO(V))\cong \partial(\calO)\circ_{\calO} \calO(V)\cong \partial(\calO)(V).
 \] 

\begin{example}
Let $\Ass$ denote the operad of associative algebras, and $\uAss$ denote the operad of unital associative algebras. The species $\partial(\Ass)$ is naturally identified with the Cauchy product $\uAss\otimes\uAss$. Indeed, if we have an element $a_{i_1}\cdots a_{i_n}\in\Ass(I\sqcup\{\star\})$, then the position of $a_\star $ in this product splits it into two possibly empty parts which is precisely the Cauchy product decomposition. The first factor corresponds to left multiplications, and the second factor to the right ones. In particular, if we consider the free non-unital associtive algebra $\overline{T}(V)$, its universal multiplicative envelope is isomorphic to $T(V)\otimes T(V)$.
\end{example}

Generalizing the classical definition for commutative algebras, one may define the $U_{\calO}(A)$-module of K\"ahler differentials $\Omega^1_A$ for any given operad $\calO$ and any given $\calO$-algebra $A$ \cite[Sec.~4.4]{MR2494775}. 
The intrinsic definition states that $\Omega^1_{\calO}(A)$ is the $U_{\calO}(A)$-module that represents the functor of derivations $\mathrm{Der}(A,E)$ with values in a $U_{\calO}(A)$-module $E$.  
Explicitly this module is spanned by formal expressions $p(a_1,\ldots,d a_i,\ldots,a_m)$, where $p\in\calO(m)$, and $a_1, \ldots, a_m\in A$, modulo the relations
\begin{multline*}
p(a_1,\ldots,q(a_i,\ldots,a_{i+n-1}),\ldots, d a_j,\ldots, a_{m+n-1})=\\
(p\circ_i q)(a_1,\ldots,a_i,\ldots,a_{i+n-1},\ldots, d a_j,\ldots, a_{m+n-1}),
\end{multline*}
\begin{multline*}
p(a_1,\ldots,d q(a_i,\ldots,a_{i+n-1}),\ldots, a_{m+n-1})=\\
\sum_{j=i}^{i+n-1}(p\circ_i q)(a_1,\ldots,a_i,\ldots,d a_j,\ldots,a_{i+n-1},\ldots, a_{m+n-1}).
\end{multline*}
We have the ``universal derivation''  
 \[
\d\colon A\to \Omega^1_{\calO}(A)
 \]
sending $a\in A$ to $\id(da)$. 

It is well known that for a free algebra $\calO(V)$, we have a canonical left $U_{\calO}(\calO(V))$-module isomorphism
 \[
\Omega^1_{\calO}(\calO(V))\cong U_{\calO}(\calO(V))\otimes V,
 \]
uniquely determined by identifying $\d(v)$ with $1\otimes v$ for all $v\in V$. 
Since we have an associative algebra isomorphism $U_{\calO}(\calO(V))\cong \partial(\calO)(V)$, and since every derivation of a free algebra is defined on generators, the universal derivation of the free algebra can be regarded as a map
 \[
V\to \partial(\calO)(V)\otimes V.  
 \]

\begin{example}
We continue considering the operad $\Ass$ of associative algebras. In this particular case, our general formulas imply that
 \[
\Omega^1_{\Ass}(\Ass(V))\cong U_{\Ass}(\Ass(V))\otimes V\cong T(V)\otimes T(V)\otimes V.
 \]
 Let us give an illustration of how that isomorphism is implemented. Let $V=\mathrm{Vect}(x,y)$, and consider $xyx\in \overline{T}(V))$. We have 
\begin{multline*}
\d(xyx)=\d(x)yx+x\d(y)x+xy\d(x)=\\ r_{yx}\d(x)+r_xl_x\d(y)+l_{xy}\d(x)=
(l_{xy}+r_{yx})\d(x)+r_xl_x\d(y).
\end{multline*}
 Here $l_{a}$ denotes $a\otimes 1\in T(V)\otimes T(V)$, and $r_a$ denotes $1\otimes a\in T(V)\otimes T(V)$.
\end{example}

\subsection{Cyclic homology}

Let us briefly recall some key results concerning the cyclic homology of an associative algebra over a field of zero characteristic; we refer the reader to the monographs of Loday \cite{MR1600246} and Feigin and Tsygan \cite{MR0923136} for details. 

Given an associative algebra $A$, one can consider a chain complex structure on the (non-counital) cofree conilpotent coalgebra $\overline{T}^c(sA)$ where the differential $d$ is the unique coderivation extending the map 
 \[
\overline{T}^c(sA)\twoheadrightarrow sA\otimes sA\to sA
 \] 
which is obtained as the composition of the obvious projection with the product in $A$. That chain complex is often referred to as the bar construction of $A$; we however choose to keep the notation of \cite{MR2954392} and define the bar construction for augmented algebras, applying the above recipe to the augmentation ideal. The cyclic permutation $\sigma_n=(1\ 2\ \ldots\ n)$ acts on the component $(sA)^{\otimes n}$ of the coalgebra $\overline{T}^c(sA)$, and the images of $\id-\sigma_n$ assemble into a subcomplex of $\overline{T}^c(sA)$. The underlying object of the quotient complex is the space of cyclic words $\Cyc(sA)$, and cyclic homology of $A$ may be defined as the homology of the degree one shift of that complex:
 \[
HC_\bullet(A):=H_\bullet(s^{-1}\Cyc(sA),d).
 \]
In particular, $HC_0(A)=A/[A,A]$, and another definition of cyclic homology interprets it as higher nonabelian derived functors of the commutator quotient 
 \[
|A|:=A/[A,A].
 \]
Concretely, if $R_\bullet=(R,d_R)\simeq A$ is a quasi-free resolution of $A$, i.e., a differential graded associative algebra whose underlying associative algebra $R$ is free and whose homology is isomorphic to $A$, then  
 \[
HC_\bullet(A)\cong H_\bullet(R_\bullet/[R_\bullet,R_\bullet]).
 \]
One of our results, Theorem \ref{th:calchom} below, uses cyclic homology of twisted associative algebras; over a field of characteristic zero, all the results about 
cyclic homology of associative algebras extend \emph{mutatis mutandis} to twisted associative algebras, if one replaces the tensor product of vector spaces by the Cauchy product. Alternatively, for a twisted associative algebra $\calA$ and any vector space~$V$, the vector space $\calA(V)$ has a canonical structure of an associative algebra induced by the twisted associative algebra structure of $\calA$, and one can view cyclic homology calculations for $\calA$ as cyclic homology calculations for all $\calA(V)$ at the same time, keeping the answer functorial in $V$. 

\section{Divergence of derivations and wheeled operads}\label{sec:div-wheeled}

In this section, we discuss two notions that will be of importance for our main results: divergence of derivations of free operadic algebras and wheeled operads. While formally no material of this section is new (an equivalent definition of divergence is due to Powell \cite{Powell21}, and an equivalent definition of a wheeled operad is given in the foundational paper of Markl, Merkulov, and Shadrin \cite{MR2483835}), our viewpoints on divergence and on wheeled operads are new, and we believe that these viewpoints fill a gap in the existing literature, clarifying the meaning of those objects in a substantial way.

\subsection{Divergence of derivations}
We shall now introduce the notion of divergence of a derivation of $\calO(V)$. This is equivalent to the definition recently given by Powell \cite{Powell21}. Though in \cite[Appendix B]{Powell21} a viewpoint using universal multiplicative enveloping algebras is indicated, it is not the approach that is pursued in that paper; we believe that our viewpoint has the advantage of hinting at the role of wheeled operads in this context.  

Let $D$ be a derivation of $\calO(V)$. The \emph{divergence} of the derivation $D$, denoted by $\Div(D)$, is defined
as follows. We compose the map $\imath_D\in\Hom(V,\calO(V))$ with the universal derivation, obtaining a map
 \[
\mathrm{d}\circ \imath_D\colon V\to \partial(\calO)(V)\otimes V, 
 \]
which we identify, using the hom-tensor adjunction and symmetry isomorphisms of tensor products, with an element $\jmath(\mathrm{d}\circ \imath_D)$ of $V^*\otimes V\otimes \partial(\calO)(V)$. This latter is an associative algebra $A=\Hom(V,V)\otimes\partial(\calO)(V)$, and we can take the universal trace on it, which is the canonical projection to the commutator quotient~$|A|$. It is well known that the commutator quotient of a tensor product of two associative algebras is isomorphic to the tensor product of commutator quotients and $|\Hom(V,V)|=\k$, so 
 \[
|\Hom(V,V)\otimes\partial(\calO)(V)|\cong |\partial(\calO)(V)|. 
 \]
For a derivation $D\in\Der(\calO(V))$, its \emph{divergence} is the element 
 \[
\Div(D)\in |\partial(\calO)(V)|
 \] 
given by the universal trace of the element $\jmath(\mathrm{d}\circ \imath_D)\in \Hom(V,V)\otimes\partial(\calO)(V)$.

\begin{remark}
If one chooses a basis $x_1,\ldots,x_n$ of $V$, each derivation $D$ of $\calO(V)$ is uniquely determined by its components $D_i=D(x_i)$, and, once we define ``partial derivatives'' $\frac{\partial f}{\partial x_i}\in\calO(V)$ of $f\in\calO(V)$ by the formula 
 \[
\d(f)=:\sum_{i=1}^n\frac{\partial f}{\partial x_i}\d(x_i),
 \]
the divergence of $D$ is computed by writing the ``classical'' formula 
 \[
\sum_{i=1}^n\frac{\partial{D_i}}{\partial x_i},
 \]
and projecting the result to the commutator quotient $|\partial(\calO)(V)|$.
\end{remark}

Let us consider a simple yet useful example. 

\begin{example}\label{ex:divforalg}
Let $A$ be a unital associative algebra, and consider it as an operad $\calO$ that only has operations in arity one given by $A$. In this case, the free $\calO$-algebra $\calO(V)$ generated by a vector space $V$ is just the free left module $A\otimes V$, and a derivation of $\calO(V)$ is just an $A$-module endomorphism. Since we have isomorphisms of associative algebras  
 \[
\End_A(A\otimes V)\cong \End_A(A)\otimes \End(V)\cong A^{\mathrm{op}}\otimes\End(V),
 \]
for $V=\k^n$, the Lie algebra $\Der(\calO(V))$ is the Lie algebra $\mathfrak{gl}_n(A^{\mathrm{op}})\cong \mathfrak{gl}_n(A)$.

Moreover, the species derivative $\partial(\calO)$ is supported at the empty set and coincides with $A$, and we have an associative algebra isomorphism $\partial(\calO)(V)\cong A$. Finally, the module of K\"ahler differentials $\Omega^1_{\calO}(\calO(V))$ is isomorphic to $A\otimes V$, and the universal derivation 
 \[
\d\colon \calO(V)\to \Omega^1_{\calO}(\calO(V))
 \]
is in this case the identity map of $A\otimes V$. From that, it immediately follows that the divergence of derivations of $\calO(\k^n)$ is given by the composition 
 \[
\mathfrak{gl}_n(A)\to A\to |A|,
 \] 
where the first map computes the trace of the matrix, and the second map is the canonical projection onto the commutator quotient.
\end{example}

Let us give another example that is a bit more concrete.

\begin{example}\label{ex:bergman}
We continue considering the operad $\Ass$ of associative algebras. Let $V=\mathrm{Vect}(x,y)$, and consider the derivation $D\in \Der(\Ass(V))$ that sends the generator $x$ to $[x,y]^2$ and the generator $y$ to zero. (It is the tangent derivation of the Bergman automorphism of the algebra generated by two generic $2\times 2$-matrices \cite{bergman1979}.) To compute the divergence of $D$, we should compute the projection of
 \[
\frac{\partial D(x)}{\partial x}+\frac{\partial D(y)}{\partial y}
 \]
to the commutator quotient. Since
 \[
\partial D(x)=[x,y]^2=xyxy-xyyx-yxxy+yxyx, 
 \] 
we have   
\begin{multline*}
\d([x,y]^2)=\d(x)yxy+x\d(y)xy+xy\d(x)y+xyx\d(y)\\
-\d(x)yyx-x\d(y)yx-xy\d(y)x-xyy\d(x)\\
-\d(y)xxy-y\d(x)xy-yx\d(x)y-yxx\d(y)\\
+\d(y)xyx+y\d(x)yx+yx\d(y)x+yxy\d(x),
\end{multline*}
implying that
\begin{multline*}
\d([x,y]^2)=
(r_{yxy}+l_{xy}r_y-r_{yyx}-l_{xyy}-l_yr_{xy}-l_{yx}r_y+l_yr_{yx}+l_{yxy})\d(x)\\
+(r_{xy}l_x+l_{xyx}-r_{yx}l_x-l_{xy}r_x-r_{xxy}-l_{yxx}+r_{xyx}+l_{yx}r_x)\d(y).
\end{multline*}
Additionally, we have $D(y)=0$, so we obtain
\begin{multline*}
\frac{\partial D(x)}{\partial x}+\frac{\partial D(y)}{\partial y}=r_{yxy}+l_{xy}r_y-r_{yyx}-l_{xyy}-l_yr_{xy}-l_{yx}r_y+l_yr_{yx}+l_{yxy}=\\
r_yr_xr_y+l_xl_yr_y-r_xr_yr_y-l_xl_yl_y-l_yr_yr_x-l_yl_xr_y+l_yr_xr_y+l_yl_xl_y,
\end{multline*}
which manifestly vanishes in the commutator quotient of the universal enveloping algebra. Thus, $\Div(D)=0$.
\end{example}

A fundamental property of divergence is given by the fact that it is a cocycle for the Lie algebra of derivations. To make it precise, let us note that each derivation of an associative algebra $A$ preserves $[A,A]$, and thus induces an endomorphism of the commutator quotient~$|A|$. Since a derivation $D$ of $\calO(V)$ induces a unique derivation of the associative algebra $\partial(\calO)(V)\cong U_{\calO}(\calO(V))$, it follows that $D$ induces an endomorphism of $|\partial(\calO)(V)|$, which we shall denote by $D_*$. 

\begin{proposition}\label{prop:divcocycle}
Divergence is a $1$-cocycle of $\Der(\calO(V))$ with values in $|\partial(\calO)(V)|$: for two derivations $D,D'$ of $\calO(V)$, we have 
 \[
\Div([D,D'])=D_*(\Div(D'))-D'_*(\Div(D)).
 \]
\end{proposition}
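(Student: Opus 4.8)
The plan is to prove the cocycle identity by unwinding the definition of divergence in terms of the universal derivation and the universal trace, and then tracking how the Lie bracket of derivations interacts with these constructions. Recall that $\Div(D)$ is obtained by taking $\jmath(\d\circ\imath_D)\in\Hom(V,V)\otimes\partial(\calO)(V)$ and projecting to the commutator quotient. Since the universal trace is, by construction, the canonical projection $A\twoheadrightarrow|A|$ for $A=\Hom(V,V)\otimes\partial(\calO)(V)$, and since the commutator quotient of a tensor product splits as $|\Hom(V,V)\otimes\partial(\calO)(V)|\cong|\partial(\calO)(V)|$, the key point is to understand the element $\jmath(\d\circ\imath_{[D,D']})$ modulo commutators. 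My first step would therefore be to compute $\imath_{[D,D']}$ in terms of $\imath_D$ and $\imath_{D'}$, which by the pre-Lie description amounts to $\imath_{[D,D']}(v)=D(\imath_{D'}(v))-D'(\imath_D(v))$.

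The main work is then to apply the universal derivation $\d$ to this expression and to exploit a Leibniz-type compatibility between $\d$ and an arbitrary derivation $D$. Concretely, I would establish that for any derivation $D$ and any $a\in\calO(V)$, one has a formula of the shape
\[
\d(D(a))=D_*(\d(a))+(\text{term recording how }D\text{ acts on the differentiated slot}),
\]
where $D_*$ denotes the induced derivation on $U_{\calO}(\calO(V))\cong\partial(\calO)(V)$, acting on the first tensor factor of $\Omega^1_{\calO}(\calO(V))\cong U_{\calO}(\calO(V))\otimes V$, while the extra term feeds $D$ into the $V$-slot carried by $\d$. This is the operadic analogue of the identity $\partial_i(D_j)$ versus $D(\partial_i(\cdot))$ in the classical vector-field computation, and it is exactly what the two defining relations of $\Omega^1_{\calO}(\calO(V))$ encode. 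Feeding $\imath_{D'}(v)$ (respectively $\imath_D(v)$) into this formula and then applying $\jmath$ and the universal trace, the ``action on the $V$-slot'' terms are precisely what assemble into the quantities whose traces give $\Div(D')$ and $\Div(D)$, now acted on by $D_*$ and $D'_*$, while the ``differentiated slot'' terms from $D$ and $D'$ must cancel against each other modulo commutators.

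The step I expect to be the main obstacle is verifying this last cancellation cleanly, i.e.\ showing that the cross terms $\jmath(\d\circ\imath_D)$ composed with the action of $D'$ on the wrong factor cancel once one passes to the commutator quotient. The difficulty is bookkeeping: the identification $\jmath$ reshuffles $\partial(\calO)(V)\otimes V$ and $V^*$ into $\Hom(V,V)\otimes\partial(\calO)(V)$, and one must check that the trace over $\Hom(V,V)$ converts a composite of the form $D'$ acting first and then the trace into $D'_*$ applied after the trace, using that a derivation of an associative algebra descends to the commutator quotient (as noted just before the statement). Here the cyclicity of the trace and the fact that $|{-}|$ kills commutators are essential, and the two minus signs in the asserted identity come from the antisymmetry of the bracket together with the sign bookkeeping in $\imath_{[D,D']}$. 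I would organize this by first proving the infinitesimal Leibniz formula for $\d\circ D$ as a standalone lemma, so that the cocycle identity reduces to a purely formal manipulation of traces, thereby isolating the genuine content from the routine index chasing.
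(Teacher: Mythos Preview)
Your approach is essentially the same as the paper's. The paper condenses your Leibniz-type formula into the single identity
\[
\jmath(\mathrm{d}\circ\imath_{D\triangleleft D'})=D_*(\jmath(\mathrm{d}\circ\imath_{D'}))+\jmath(\mathrm{d}\circ\imath_{D'})\,\jmath(\mathrm{d}\circ\imath_{D})
\]
in the associative algebra $\Hom(V,V)\otimes\partial(\calO)(V)$, so that upon antisymmetrizing in $D,D'$ the cross terms literally assemble into the commutator $[\jmath(\mathrm{d}\circ\imath_{D'}),\jmath(\mathrm{d}\circ\imath_{D})]$ and vanish in $|A|$ automatically; thus the ``main obstacle'' you anticipate is in fact a one-line observation once the identity is stated at this level.
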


\begin{proof}
Our definitions immediately imply
 \[
\jmath(\mathrm{d}\circ\imath_{D\triangleleft D'})=D_*(\jmath(\mathrm{d}\circ\imath_{ D'}))+\jmath(\mathrm{d}\circ\imath_{ D'})\jmath(\mathrm{d}\circ\imath_{ D}), 
 \]
implying that
\begin{multline*}
\jmath(\mathrm{d}\circ\imath_{[D,D']})=\jmath(\mathrm{d}\circ\imath_{D\triangleleft D'-D'\triangleleft D})=\\ D_*(\jmath(\mathrm{d}\circ\imath_{ D'}))-D'_*(\jmath(\mathrm{d}\circ\imath_{ D}))+[\jmath(\mathrm{d}\circ\imath_{ D'}),\jmath(\mathrm{d}\circ\imath_{ D})], 
\end{multline*}
so in the commutator quotient we have
\[
\Div([D,D'])=D_*(\Div(D'))-D'_*(\Div(D)),
 \]
as required. 
\end{proof}

As an immediate consequence, all derivations with constant divergence (that is, proportional to the coset of the unit element in the commutator quotient) form a Lie subalgebra of the Lie algebra $\Der(\calO(V))$, and all derivations with zero divergence form an ideal of codimension one of that latter subalgebra. We shall denote the former subalgebra $\SDer^\wedge(\calO(V))$, and the latter one $\SDer(\calO(V))$. For an augmented operad, the augmentation of $\calO$ induces surjective Lie algebra morphisms $\SDer^\wedge(\calO(V))\twoheadrightarrow\mathfrak{gl}(V)$ and $\SDer(\calO(V))\twoheadrightarrow\mathfrak{sl}(V)$ with the same kernel which we shall denote $\SDer^+(\calO(V))$.

\begin{example}\label{ex:ULie}
It is well known that the Lie algebra $\Der^+(\Lie(V))$ has generators of all weights \cite{kassabov2003automorphism,MR3047471}. It is possible to give an easy explanation of that fact using the notion of divergence. First, we recall that the universal multiplicative enveloping algebra of a Lie algebra is known to coincide with the left adjoint of the functor from associative algebras to Lie algebras that retains the Lie bracket only, and hence universal enveloping algebras of free Lie algebras are free, so that we have an isomorphism of twisted associative algebras $\partial(\Lie)\cong\uAss$. From the cocycle property 
 \[
\Div([D,D'])=D_*(\Div(D'))-D'_*(\Div(D))
 \]
and the fact that for each derivation $D$ of positive weight of the free Lie algebra, the derivation $D_*$ sends 
 \[
V\subset U_{\Lie}(\Lie(V))\cong \uAss(V)\cong T(V)
 \]
to a linear combination of nontrivial Lie monomials, which, in particular, vanishes in the commutator quotient, it follows that every commutator of two derivations of positive weight has zero divergence, and so we need generators in each weight to account for derivations with nonzero divergence.
\end{example}

\subsection{Wheeled operads}\label{sec:wheeledop}

The notion of a wheeled operad introduced in \cite{MR2483835} is designed to encode $\calP$-algebras for which, together with structure operations, we also keep track of ``universal traces'' of elements of the universal multiplicative envelope. In \cite{MR2483835}, the notion of a wheeled operad is obtained as a specialization of the notion of a wheeled properad, which results in some definitions being less straightforward than they could be. In this section, we present these definitions in the way which will be more useful for our purposes. (We also choose a different convention for gradings in the bar construction to make everything agree with the conventions of \cite{MR2954392} in the operadic case.)

We shall now consider two-coloured linear species, that of contravariant functors $\calF$ from $\Fin$ to the category of vector spaces decomposed into a direct sum of two subspaces. We shall write $\calF(I)=:\calF_\o(I)\oplus\calF_\w(I)$, and think of these summands as ``the operadic part'' and ``the wheeled part'' of $\calF(I)$, or subspaces of ``operations'' with one output and no outputs, respectively. 

A \emph{wheeled operad} is a two-coloured linear species $\calU=\calU_\o\oplus\calU_\w$, where:
\begin{itemize}
\item[-] $\calU_\o$ is an operad, 
\item[-] $\calU_\w$ is a right $\calU_\o$-module, 
\item[-] $\partial(\calU_\o)$ is given a \emph{trace map} \[\tr\colon\partial(\calU_\o)\to\calU_\w,\] which is a morphism of right $\calU_\o$-modules and 
vanishes on the commutators in the twisted associative algebra $\partial(\calU_\o)$. 
\end{itemize}
(Note that commutators in $\partial(\calO)$ are commutators in the graded sense: they are defined using the left $\Ass$-module structure, and so appropriate Koszul signs appear automatically.) A morphism of wheeled operads is a map of two-coloured linear species that agrees with all the three structures above. 

To each operad $\calO$, one can associate canonically two different wheeled operads. The first option is to consider $\calO$ as a trivial wheeled operad $\calO\oplus0$ postulating that all trace maps are equal to zero. The second option is to consider the \emph{wheeled completion} $\calO^{\circlearrowleft}$ with
 \[
\calO^{\circlearrowleft}_\o:=\calO, \quad \calO^{\circlearrowleft}_\w:=|\partial(\calO)|,
 \] 
where the trace map 
$\partial(\calO)\to |\partial(\calO)|$ is the canonical projection. This notion was first introduced (some years before the introduction of wheeled operads) by Kapranov in \cite[Sec.~3.3]{MR1671737}, where he interpreted what we identify as $|\partial(\calO)|$ as the ``module of natural forms'' on $\calO$-algebras. For a given operad $\calO$, all wheeled operads $\calU$ with $\calU_\o=\calO$ form a category, and the wheeled operads $\calO$ and $\calO^{\circlearrowleft}$ are, respectively, the terminal and the initial objects of that category. 

The unit wheeled operad is the wheeled completion $\mathbbold{1}^{\circlearrowleft}$ of the trivial operad~$\mathbbold{1}$; concretely, we have $\mathbbold{1}^{\circlearrowleft}_\o=\mathbbold{1}^{\circlearrowleft}_\w=\k$, and all the structure maps of $\mathbbold{1}^{\circlearrowleft}$ coincide (once we identify all sources and targets of these maps with $\k$ using the monoidal unit property), with the identity map of $\k$. The unit map $\eta\colon\mathbbold{1}\to\calU_\o$ induces the corresponding unit map of wheeled operads $\eta^{\circlearrowleft}\colon\mathbbold{1}^{\circlearrowleft}\to\calU$. We shall only consider \emph{augmented} wheeled operads $\calU$ equipped with a map $\epsilon^{\circlearrowleft}\colon\calU\to \mathbbold{1}^{\circlearrowleft}$ which is a left inverse of the augmentation, and we denote by $\overline{\calU}=\overline{\calU}_\o\oplus\overline{\calU}_\w$ the kernel of the augmentation.    

Just as the prototypical example of an operad is given by the endomorphism operad, one may consider the \emph{wheeled endomorphism operad} $\WEnd_V$ of a finite-dimensional vector space~$V$. It is given by 
 \[
\WEnd_V=\Hom_{\k}(V^{\otimes I},V)\oplus\Hom_{\k}(V^{\otimes I},\k),
 \]
where the operad and the right module structures are obvious, and where the trace map 
 \[
\tr\colon\Hom(V^{\otimes (I\sqcup\{\star\})},V)\to \Hom(V^{\otimes I},\k)
 \]
is obtained as the composite
 \[
\Hom(V^{\otimes (I\sqcup\{\star\})},V)\to \Hom(V^{\otimes I},\k)
 \]
of the contraction of a multilinear map with respect to the position $\star$. (The trace map vanishes on commutators of $\partial(\End_V)$ for the same reasons as the usual trace of endomorphisms does.) 

Let us also give a monadic definition of wheeled operads. Recall that we have an endofunctor of the category of species given by the monad of rooted trees. Existence of the root gives each edge of the rooted tree a canonical direction towards the root. Let us consider a slightly more general class of (isomorphism classes of) graphs with directed edges by only postulating that each internal vertex has at most one output edge, and not imposing any other restrictions. We shall refer to vertices with one output edges as corollas, and to vertices with no output edges as cul-de-sac vertices. 

\begin{lemma}[{\cite[Lemma 1.4]{MR0725416}}]\label{lm:fuchs}
If such a graph $\Gamma$ is connected, it has at most one outgoing edge.
\end{lemma}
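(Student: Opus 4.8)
The plan is to reduce the statement to a short count of directed edges governed by the out-degree constraint. I would write $V$ for the number of internal vertices of $\Gamma$, write $E$ for the number of internal edges (edges joining two internal vertices, loops at a single vertex included), and write $\ell$ for the number of outgoing legs, that is, external edges directed away from the graph; these outgoing legs are exactly the ``outgoing edges'' of $\Gamma$ referred to in the statement. The starting observation is that, since every edge is directed, each internal edge has a well-defined source and hence contributes exactly one outgoing half-edge at that source (a loop likewise contributes a single outgoing half-edge at its vertex), while each outgoing leg contributes one outgoing half-edge at the vertex carrying it. Summing over all internal vertices, the total number of outgoing half-edges of $\Gamma$ therefore equals $E+\ell$.

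I would then feed in the two available constraints. The defining hypothesis that each internal vertex has at most one outgoing edge says precisely that every vertex carries at most one outgoing half-edge, so
\[
E+\ell\le V.
\]
Connectedness of $\Gamma$, on the other hand, means that its internal edges contain a spanning tree of the $V$ vertices, so that a connected graph on $V$ vertices has at least $V-1$ edges, giving
\[
E\ge V-1.
\]
Subtracting the second inequality from the first yields $\ell\le V-E\le 1$, which is the claim.

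The argument has no genuine obstacle --- it is the elementary identity ``the out-degrees sum over the vertices to the number of edges possessing a source'' combined with the spanning-tree bound --- so the only real work is bookkeeping, and that is where I would be careful. Because the phrase ``not imposing any other restrictions'' allows loops, parallel edges, incoming legs, and the degenerate cases $V\le 1$, I would check that both displayed inequalities survive each of these: a loop must be counted as contributing one outgoing and one incoming half-edge at the same vertex (hence one outgoing half-edge, and one unit of $E$); incoming legs contribute only to in-degrees and so are irrelevant to the bound $E+\ell\le V$; and loops and parallel edges only increase $E$, so the bound $E\ge V-1$ is unaffected. Keeping the outgoing legs of $\Gamma$ cleanly distinguished from the outgoing half-edges internal to $\Gamma$ throughout is the single point on which the count could otherwise go wrong.
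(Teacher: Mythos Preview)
Your counting argument is correct and complete; the two inequalities $E+\ell\le V$ (from the out-degree bound) and $E\ge V-1$ (from connectedness) combine exactly as you say, and your checks for loops, parallel edges, and incoming legs are all accurate.

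The paper's proof is different and even shorter: it argues by contradiction that if $\Gamma$ had two outgoing legs, one could take an undirected path between the vertices carrying them, and then some vertex on that path (or an endpoint) would be forced to have two outgoing half-edges. This is a local path-following argument, whereas yours is a global Euler-style count. Your version has the small advantage that the inequality $\ell\le V-E$ makes the trichotomy in the paper immediately visible: trees ($E=V-1$) allow one outgoing leg, wheels ($E=V$) force $\ell=0$. The paper's version has the advantage of being a single sentence with no bookkeeping. Both are entirely elementary.
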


\begin{proof}
If there are at least two outgoing edges, we may consider a (non-directed) path connecting them in $\Gamma$; one of the vertices of the path will have two outgoing edges, which is a contradiction.
\end{proof}

More precisely, there are three possible types of connected graphs that we allow: rooted trees (with one output edge), cul-de-sac trees (without output edges), and wheels (without output edges), as in the following picture: 
\[  \xygraph{ 
!{<0cm,0cm>;<1cm,0cm>:<0cm,1cm>::} 
!~-{@{-}@[|(2.5)]}
!{(0,0) }*+[o][F-]{\phantom{\alpha}}="a"
!{(0.5,1) }*+[o][F-]{\phantom{\alpha}}="b"
!{(-0.2,1) }*+{5}="c"
!{(-0.7,1) }*+[o][F-]{\phantom{\alpha}}="d"
!{(1,1) }*+{2}="e"
!{(0,-1) }="f"
!{(0.1,2) }*+{3}="g"
!{(1,2) }*+{4}="h"
!{(-1,2) }*+{1}="k"
!{(-0.4,2) }*+{6}="l"
"b" : "a" 
"c" : "a"
"e" : "a"
"d" : "a"
"a" : "f"
"g" : "b"
"h" : "b"
"k" : "d"
"l" : "d"
}\qquad
\xygraph{ 
!{<0cm,0cm>;<1cm,0cm>:<0cm,1cm>::} 
!~-{@{-}@[|(2.5)]}
!{(0,0) }*+[o][F-]{\phantom{\alpha}}="a"
!{(0.5,1) }*+[o][F-]{\phantom{\alpha}}="b"
!{(-0.2,2) }*+{4}="c"
!{(-0.7,1) }*+[o][F-]{\phantom{\alpha}}="d"
!{(1,1) }*+{6}="e"
!{(0.3,2) }*+{3}="g"
!{(1,2) }*+{5}="h"
!{(-1,2) }*+{1}="k"
!{(-0.6,2) }*+{2}="l"
"b" : "a" 
"c" : "d"
"e" : "a"
"d" : "a"
"g" : "b"
"h" : "b"
"k" : "d"
"l" : "d"
}
\qquad  \xygraph{ 
!{<0cm,0cm>;<1cm,0cm>:<0cm,1cm>::} 
!~-{@{-}@[|(2.5)]}
!{(0,0) }*+[o][F-]{\phantom{\alpha}}="a"
!{(1.2,2.2) }*+[o][F-]{\phantom{\alpha}}="b"
!{(-0.7,1) }*+[o][F-]{\phantom{\alpha}}="d"
!{(0,1) }*+{7}="f"
!{(1,1) }*+{2}="e"
!{(0.5,2.9) }*+{3}="h"
!{(1,2.9) }*+{4}="m"
!{(-1,2) }*+{1}="k"
!{(-0.6,2) }*+{5}="l"
!{(3,0.8) }*+[o][F-]{\phantom{\alpha}}="g"
!{(2.9,-0.1) }*+{6}="u"
!{(3.3,-0.1) }*+{8}="v"
"e" : "a"
"d" : "a"
"f" : "a"
"h" : "b"
"m" : "b"
"k" : "d"
"l" : "d"
"u" : "g"
"v" : "g"
"a" : @`{p+(-1,-2),c+(0,0.5)} "g"
"g" : @`{p+(1,-0.3),c+(0.2,0.5)} "b"
"b" : @`{p+(0.5,2),c+(0,0.1)} "a" 
}
\]

Using our three kinds of graphs, we define an endofunctor $\calT^{\circlearrowleft}$ of the category of two-coloured linear species by setting $\calT^{\circlearrowleft}(\calF)_\o(I)=\calT(\calF_\o)(I)$ and 
\begin{multline*}
\calT^{\circlearrowleft}(\calF)_\w(I):=\\ \bigoplus_{\substack{\tau \text{ a sink, }\\ \mathsf{Leaves}(\tau)=I}}\bigotimes_{v\in\mathsf{Vertices}(\tau)}\calF_{|\mathrm{out}(v)|}(\mathrm{in}(v))\oplus \bigoplus_{\substack{\tau \text{ a wheel, }\\ \mathsf{Leaves}(\tau)=I}}\bigotimes_{v\in\mathsf{Vertices}(\tau)}\calF_\o(\mathrm{in}(v)).
\end{multline*} 
This endofunctor can be given a monad structure that is completely analogous to the rooted tree monad, via insertions of graphs into graphs for the monad product, and via considering one-vertex trees for the monad unit. We call this monad the monad of \emph{rooted trees and wheels}.  A wheeled operad is an algebra over that monad. The trace map corresponds to creating a wheel out of a rooted tree (gluing the output edge to one of the input edges), and vanishing on the commutators corresponds to the fact that the following two wheel graphs are the same:
\[  \xygraph{ 
!{<0cm,0cm>;<1cm,0cm>:<0cm,1cm>::} 
!{(0,0) }*+[o][F-]{\phantom{\alpha}}="a"
!{(0.5,1) }*+[o][F-]{\phantom{\alpha}}="b"
!{(-0.2,1) }*+{i_2}="c"
!{(-0.7,1) }*+{i_1}="d"
!{(1.2,1) }*+{i_3}="e"
!{(0,2) }*+{j_1}="g"
"b" : "a" 
"c" : "a"
"e" : "a"
"d" : "a"
"g" : "b"
"a" : @`{p+(0,-5),c+(3.2,10)} "b" 
}\qquad 
\xygraph{ 
!{<0cm,0cm>;<1cm,0cm>:<0cm,1cm>::} 
!{(0,0) }*+[o][F-]{\phantom{\alpha}}="a"
!{(0.5,1) }*+[o][F-]{\phantom{\alpha}}="b"
!{(0.3,2) }*+{i_2}="c"
!{(-0.1,2) }*+{i_1}="d"
!{(1.5,2) }*+{i_3}="e"
!{(-0.5,1) }*+{j_1}="g"
"b" : "a" 
"c" : "b"
"e" : "b"
"d" : "b"
"g" : "a"
"a" : @`{p+(0,-5),c+(3,10)} "b" 
}
\]
It is important to note, however, that once we decorate the vertex with three incoming edges by an operation $\alpha$ and the vertex with four incoming edges by an operation $\beta$, a Koszul sign arising of permuting $\alpha$ and $\beta$ will intervene, being in perfect agreement with the fact that all commutators are to be considered in the graded sense. 

Similarly to the PROP completion of an operad, to each wheeled operad $\calU$ one can associate a PROP $\calP(\calU)$ whose components are given by
 \[
(\calP(\calU))(m,n):=(\calU_\o^{\otimes n}\otimes S(\calU_\w))(m),
 \]
where the tensor and the symmetric powers are taken with respect to the Cauchy product, the vertical composition comes from the composition in $\calU_\o$ and the right $\calU_\o$-module structure on $\calU_\w$, and the horizontal composition is the product in the twisted associative algebra $\calU_\o^{\otimes \bullet}\otimes S(\calU_\w)$, where the algebra structure is uniquely determined by the structures of the factors and the condition that the two factors commute with each other. This PROP is also referred to as the PROP completion of the wheeled operad $\calU$; it automatically acquires a structure of a \emph{wheeled PROP}~\cite{MR2483835} coming from the composition maps and the trace maps in $\calU$.

There is also a notion of a wheeled cooperad, where all arrows are reversed (so that the structure of a wheeled operad on $\calW=\calW_\o\oplus\calW_\w$ consists of a cooperad structure on $\calW_\o$, a right $\calW_\o$-comodule structure on $\calW_\w$, and a right comodule morphism $\calW_\w\to\partial(\calW_\o)$ called cotrace, for which the composite $\calW_\w\to\partial(\calW_\o)\to \partial(\calW_\o)\otimes\partial(\calW_\o)$ is symmetric, that is has zero projection on ``co-commutators''). The coPROP completion of a wheeled cooperad $\calW$ is denoted $\calP^c(\calW)$.

\subsection{Wheeled bar construction and Koszulness}\label{sec:wheeledbar}

Given a wheeled operad $\calU$, its wheeled bar construction $\mathsf{B}^{\circlearrowleft}(\calU)$ is $\calT^{\circlearrowleft}(s\overline{\calU}_\o\oplus\overline{\calU}_\w)$ equipped with the differential given by the sum of all ways of collapsing edges of a graph and using the wheeled operad structure maps on the labels of the vertices that are affected by such collapses. 

Explicitly, $\calT^{\circlearrowleft}(s\overline{\calU}_\o\oplus\overline{\calU}_\w)$ has three parts:
\begin{itemize}
\item[-] the part corresponding to rooted trees, which is $\calT(s\overline{\calU}_\o)$,
\item[-] the part corresponding to cul-de-sac trees, which is $\overline{\calU}_\w\circ\calT(s\overline{\calU}_\o)$,
\item[-] the part corresponding to wheels, which can be viewed as 
 \[
 \Cyc(s\partial(\overline{\calU}_\o))\circ\calT(s\overline{\calU}_\o).
 \] 
\end{itemize}
(For the latter, one can remark that for a wheel graph, each vertex involved in its only cycle has a special incoming edge that is used in the cycle, so we may view the vertices involved in the cycle as labelled by $s\partial(\overline{\calU}_\o)$; the fact that we consider a wheel amounts to using the cyclic words $\Cyc(s\partial(\overline{\calU}_\o))$.) The differential of the bar construction can be described as follows:
\begin{itemize}
\item[-] on the rooted tree part $\calT(s\overline{\calU}_\o)$, it is the differential of the operadic bar construction $\mathsf{B}(\calU_\o)$,
\item[-] on the cul-de-sac tree part $\overline{\calU}_\w\circ\calT(s\overline{\calU}_\o)$, it is the differential of the operadic bar construction $\mathsf{B}(\overline{\calU}_\w,\calU_\o)$ with coefficients in the right module $\overline{\calU}_\w$,
\item[-] on the wheel part $\Cyc(s\partial(\overline{\calU}_\o))\circ\calT(s\overline{\calU}_\o)$, it is the sum of the differential of the cyclic complex $\Cyc(s\partial(\overline{\calU}_\o))$ of the twisted associative algebra $\partial(\overline{\calU}_\o)$, the differential of the operadic bar complex $\mathsf{B}(\Cyc(s\partial(\overline{\calU}_\o)),\calU_\o)$ with coefficients in the right module $\Cyc(s\partial(\overline{\calU}_\o))$, and the map to the cul-de-sac tree part that takes a wheel whose cycle is a loop and erases that loop, transforming the label of the corresponding vertex by the trace map. 
\end{itemize}
The signs in the formula for the differential come from the fact that the wheeled bar construction $\mathsf{B}^{\circlearrowleft}(\calU)$ is the cofree (conilpotent) wheeled cooperad cogenerated by $s\overline{\calU}_\o\oplus\overline{\calU}_\w$, and the requirement that the differential is the unique coderivation extending the edge contractions on trees with two vertices and the loop contraction on wheels with one vertex. (Derivations and coderivations for wheeled operads are defined by the usual compatibility with the structure operations; in particular, a derivation of a free object is uniquely determined by the restriction to generators, and a coderivation of a cofree conilpotent object is uniquely determined by the corestriction to cogenerators.)

A weight graded wheeled operad with all generators of weight one is said to be \emph{Koszul} if the homology of its wheeled bar construction is concentrated on the diagonal (weight equal to homological degree). To give some intuition for this notion, let us remark that, in the particular case where all generators of $\calU$ are operadic (that is, come from $\calU_\o$) and moreover are all given by binary operations, the Koszul property implies that all operadic relations are linear combinations of partial compositions of pairs of generators, and all trace relations are linear combinations of traces of generators.

\begin{remark}
As in the case of operads, the main reason to study the wheel bar construction and its homology comes from the fact that it is a basic invariant of the given wheeled operad in the homotopy category of differential graded wheeled operads, which one studies by combining the general philosophy of homotopical algebra \cite{MR0223432} with the usual methods one uses when working with algebras over coloured operads \cite{MR2342815,rezkthesis}, keeping in mind that it is convenient to think about groupoid-coloured operads \cite{MR3134040,MR4425832}. In short, similarly to what happens in the case of operads, the groupoid-coloured operad encoding wheeled operads is Koszul and Koszul self-dual (which one can easily establish using the methods of \cite{KWKoszul} or \cite{MR4425832}), and the bar construction gives one half of the bar-cobar adjunction between the categories of (reduced) augmented differential graded wheeled operads and conilpotent coaugmented differential graded wheeled cooperads; moreover, that adjunction is a Quillen equivalence, and, in particular, the wheeled cobar construction of the wheeled bar construction of a given wheeled operad $\calU$ is quasi-isomorphic to $\calU$, and moreover gives a functorial cofibrant replacement of $\calU$. Thus, the bar construction is the chain complex of indecomposables of a cofibrant replacement, and the homology of $\mathsf{B}^{\circlearrowleft}(\calU)$ is the Quillen homology of the wheeled operad $\calU$. 
\end{remark}

Let us discuss the bar construction in one very particular case which will be very informative for understanding the main results of this paper.

\begin{example}\label{ex:wheeledcomplalgbar}
Let $A$ be an augmented associative algebra with augmentation ideal $I$, and consider it as an operad that only has operations in arity one given by $A$. Then the zero completion of $A$ is just the algebra $A$ itself, and the bar construction $\mathsf{B}^{\circlearrowleft}(A)$ is $\mathsf{B}(A)\oplus\Cyc(sI)$. The differential of the bar construction is induced by the differential of the usual bar construction of $A$ on both terms, and $\Cyc(sI)$ computes (up to a degree shift by one) the cyclic homology of $I$, so its homology is $H_\bullet(\mathsf{B}(A))\oplus HC_{\bullet-1}(I)$. (Note that we use the convention of \cite{MR2954392} for which the bar construction of an augmented algebra $A$ is the cofree coalgebra on its augmentation ideal $I$.) 
\end{example}

\section{Stable homology and bar constructions of wheeled operads}\label{sec:stable-homology}

In this section, we shall prove the two theorems about stable homology of Lie algebras that we announced in the introduction. The proofs rely on the use of invariant theory of $\mathfrak{gl}(V)$ in the spirit of the classical Gelfand--Fuchs calculations \cite{MR0266195} which inspired a great number of calculations of homology of infinite-dimensional Lie algebras, see, for example, the papers \cite{MR0923136,MR0356082,MR0327856,MR2269758,MR3492045,MR1247289}. \\

The main tool for computing $\mathfrak{gl}(V)$-invariants is provided by the First Fundamental Theorem for $\mathfrak{gl}(V)$ that asserts that the subspace of $\mathfrak{gl}(V)$-invariants in $V^{\otimes r}\otimes (V^*)^{\otimes s}$ is nonzero only for $r=s$, and in that case, if we use the canonical identification of that space with its linear dual, is spanned by invariants of the form 
 \[
f_\sigma\colon v_1\otimes \cdots\otimes v_r\otimes \xi_1\otimes\xi_r\mapsto \xi_1(v_{\sigma(1)})\cdots \xi_r(v_{\sigma(r)}),\quad \sigma\in S_r.
 \]
Moreover, the Second Fundamental Theorem for $\mathfrak{gl}(V)$ asserts that those invariants are linearly independent as long as $\dim(V)\ge r$. Proofs of both of these theorems are contained in \cite{MR0000255}, and are at the core of the Schur--Weyl duality between representations of symmetric groups and general linear groups. For an exposition of these results that is very close in spirit to the present work, the reader is also invited to consult \cite{MR2501578}. 

\subsection{Stable homology of the Lie algebra \texorpdfstring{$\Der^+(\calO(V))$}{Der+O} with bivariant}

In this section, we shall relate the stable homology of Lie algebras $\Der^+(\calO(V))$ with bivariant coefficients to the homolog of the wheeled bar construction of $\calO$. It is one of the central results on this paper that allowed us to create a new powerful connection between two \emph{a priori} unrelated topics.  

The following result generalizes work of Fuchs \cite{MR0725416} and directly relates the Chevalley--Eilenberg complexes to the wheeled bar construction.

\begin{theorem}\label{th:graphcx1}
The $\mathfrak{gl}(V)$-invariant Chevalley--Eilenberg complexes 
 \[
C_\bullet(\Der^+(\calO(V)),\Hom(V^{\otimes \bullet_1},V^{\otimes \bullet_2}))^{\mathfrak{gl}(V)}
 \]
are stably isomorphic to the coPROP completion of the cobar construction $\mathsf{B}^{\circlearrowleft}(\calO)$. This isomorphism is natural with respect to operad morphisms.
\end{theorem}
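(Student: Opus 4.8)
The plan is to make the $\mathfrak{gl}(V)$-invariant Chevalley--Eilenberg complex completely explicit and then recognize it, by classical invariant theory, as the coPROP completion of $\mathsf{B}^{\circlearrowleft}(\calO)$. First I would use the canonical isomorphism $\Der^+(\calO(V))\cong\Hom(V,\overline{\calO}(V))\cong V^*\otimes\overline{\calO}(V)$, where $\overline{\calO}(V)=\bigoplus_n\overline{\calO}(n)\otimes_{\k S_n}V^{\otimes n}$ is the Schur functor of the augmentation ideal, together with $\Hom(V^{\otimes p},V^{\otimes q})\cong(V^*)^{\otimes p}\otimes V^{\otimes q}$, to rewrite each chain group $\Lambda^k\Der^+(\calO(V))\otimes\Hom(V^{\otimes p},V^{\otimes q})$ as a direct sum of $\mathfrak{gl}(V)$-modules of the shape $(V^*)^{\otimes r}\otimes V^{\otimes s}$ with multiplicity spaces assembled from $\overline{\calO}$. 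I would record here the key simplification that, since the coefficients are made into a $\Der^+(\calO(V))$-module through the augmentation $\Der(\calO(V))\twoheadrightarrow\mathfrak{gl}(V)$ whose kernel is exactly $\Der^+(\calO(V))$, the module is in fact trivial; consequently the Chevalley--Eilenberg differential reduces to its bracket part $\sum_{i<j}\pm[\xi_i,\xi_j]\wedge\cdots$, and the coefficient factor merely comes along as a multiplicity space.

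Next I would invoke the First and Second Fundamental Theorems for $\mathfrak{gl}(V)$. By the FFT the invariants vanish unless the numbers of $V$- and $V^*$-slots agree and are then spanned by the contraction operators $f_\sigma$; by the SFT these are linearly independent once $\dim(V)$ exceeds the number of slots, which, for a fixed homological degree, weight, and coefficient bidegree $(p,q)$, gives the asserted stabilization. A choice of $\sigma$ is exactly a perfect matching of the $V^*$-slots with the $V$-slots, and I would read such a matching as a directed graph: each wedge factor becomes a corolla whose single output edge is its $V^*$ and whose input edges are the $V$'s carried by its $\overline{\calO}(n)$-label, while the external $(V^*)^{\otimes p}$ and $V^{\otimes q}$ become free legs. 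Since every corolla has exactly one output, Lemma~\ref{lm:fuchs} shows that the connected components are only rooted trees and wheels (no cul-de-sac vertices arise, in agreement with the vanishing of the wheeled part of the trivial wheeled operad $\calO$). This identifies the underlying graded space with $\calP^c(\calT^{\circlearrowleft}(s\overline{\calO}))$, the coPROP completion of the underlying space of $\mathsf{B}^{\circlearrowleft}(\calO)$: the exterior power supplies precisely the Koszul signs of the suspension $s$, the trees labeled by $s\overline{\calO}$ together with the wheels labeled cyclically by $s\partial(\overline{\calO})$ reproduce the two parts of the wheeled bar construction, and the external legs coming from $\Hom(V^{\otimes p},V^{\otimes q})$ are exactly what promotes the wheeled cooperad to its coPROP completion.

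It then remains to match the differentials. Using the pre-Lie description $\imath_{D\triangleleft D'}(v)=D(\imath_{D'}(v))$, the bracket $[D,D']$ acts on a matched configuration by letting the output of one corolla enter an input of the other; after projecting onto a fixed invariant $f_\sigma$ only the term compatible with the matching survives, so the bracket differential becomes edge contraction followed by the operad composition on the labels. On the rooted-tree part this is precisely the differential of the operadic bar construction $\mathsf{B}(\calO)$; along a cycle it composes two adjacent vertices via the twisted associative product $\mu$ on $\partial(\overline{\calO})$, which is the internal cyclic differential of $\Cyc(s\partial(\overline{\calO}))$, together with the bar differential of the trees grafted onto the wheel. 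The loop-erasing trace term of the wheeled bar differential lands in the (vanishing) cul-de-sac part and so does not contribute here, consistently with $\calO$ being the trivial wheeled operad.

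The step I expect to be the main obstacle is this last identification of differentials with all signs in place: one must show that the single pre-Lie/bracket operation, once resolved against the invariant-theoretic matchings, distributes exactly as the sum of the tree-contraction, module, and cyclic differentials of $\mathsf{B}^{\circlearrowleft}(\calO)$, and that the Koszul signs produced by the suspension and by the exterior algebra coincide. Concretely this amounts to a careful term-by-term comparison in which the sum over ``which generator $D$ hits'' in $D\triangleleft D'$ is shown to localize onto the matched edge, with the cocycle identity of Proposition~\ref{prop:divcocycle} appearing as the infinitesimal shadow of the wheel (trace) compatibility. Once the isomorphism of complexes is established, naturality with respect to operad morphisms is automatic, since every identification above is built functorially from the structure maps of $\calO$.
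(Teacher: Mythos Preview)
Your proposal is essentially correct and follows the same route as the paper: expand the Chevalley--Eilenberg chains via $\Der^+(\calO(V))\cong V^*\otimes\overline{\calO}(V)$, apply the First and Second Fundamental Theorems to obtain decorated directed graphs in the stable range, use Lemma~\ref{lm:fuchs} to see that connected pieces are rooted trees or wheels (no cul-de-sac vertices since the trace is zero), and match the Lie bracket with edge contraction in $\mathsf{B}^{\circlearrowleft}(\calO)$, noting that loops contribute nothing on either side. The paper also records the explicit stable bound $\dim(V)>d+p$ and handles signs by passing to the dual (cohomological) side where the differential is a derivation and hence local, which is the standard trick you allude to.

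One small correction: your invocation of Proposition~\ref{prop:divcocycle} is out of place. That cocycle identity concerns the divergence and is the ingredient needed for Theorem~\ref{th:graphcx2}, where one works with $\calO^{\circlearrowleft}$ and must match the loop-contraction term with the internal differential of the dg Lie algebra $\Der^+(\calO(V))\ltimes_{\Div}\overline{|\partial(\calO)(V)|}$. In the present theorem $\calO$ is taken with the \emph{zero} trace, so the loop-erasing term of the wheeled bar differential vanishes identically, and correspondingly on the Chevalley--Eilenberg side a loop at a single corolla never arises from a Lie bracket (one cannot bracket an element with itself). No divergence argument is needed; you should simply observe that edge contractions along loops are absent on both sides for the same structural reason.
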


\begin{proof}
Note that
\[
\Der(\calO(V))\cong \bigoplus_{k\ge 1}\Hom(V,\calO(k)\otimes_{\k S_k}V^{\otimes k})\cong \bigoplus_{k\ge 1}V^*\otimes\calO(k)\otimes_{\k S_k}V^{\otimes k},
 \]
so that we have  
\begin{multline*}
C_\bullet(\Der^+(\calO(V)), \Hom(V^{\otimes p},V^{\otimes q}))\cong \\
S(s\Der^+(\calO(V)))\otimes\Hom(V^{\otimes p},V^{\otimes q})\cong\\  
\bigoplus_{\substack{k\ge 1,\\ n_1,\ldots,n_k\ge 1\\ p_1,\ldots,p_k\ge 0}}\bigotimes_{i=1}^kS^{p_i}(sV^*\otimes\overline{\calO}(n_i)\otimes_{\k S_{n_i}}V^{\otimes n_i})\otimes (V^*)^{\otimes p}\otimes V^{\otimes q}.   
\end{multline*}
Moreover, the summand 
 \[
\bigotimes_{i=1}^kS^{p_i}(sV^*\otimes\overline{\calO}(n_i)\otimes_{\k S_{n_i}}V^{\otimes n_i})\otimes (V^*)^{\otimes p}\otimes V^{\otimes q} 
 \]
is, as a $\mathfrak{gl}(V)$-module, a submodule of several copies of $V^{\otimes N}\otimes (V^*)^{\otimes M}$ where
 \[
N=\sum_{i=1}^kn_ip_i+q, \quad M=\sum_{i=1}^k p_i +p.
 \] 
We also note that our chain complex is bi-graded:
 \[
C^{(w)}_d(\Der^+(\calO(V)),\Hom(V^{\otimes p},V^{\otimes q}))^{\mathfrak{gl}(V)} 
 \]
consists of elements of weight $w$ (computed out of the weight on the Lie algebra $\Der^+(\calO(V))$) and of homological degree $d$. In the notation above, we have $w=\sum_{i=1}^k (n_i-1)p_i$ and $d=\sum_{i=1}^k p_i$, so in particular $w+d=\sum_{i=1}^k n_ip_i$. Thus, our above formulas can be written as 
 \[
N=w+d+q, M=d+p. 
 \] 
Instead of thinking of sums of tensor products of symmetric powers above, we shall visualize our chain complex as the vector space spanned by all linear combinations of sets of ``LEGO pieces'' of the following three types: 
\begin{enumerate}
\item a ``corolla'', that is a vertex $v$ with one incoming half-edge and $n_v\ge 2$ outgoing half-edges, where $v$ carries a label from $\overline{\calO}(\text{out}(v))$, the incoming half-edge is labelled by an element $\xi\in V^*$, and each outgoing half-edge $e$ is labelled by an element $u_e\in V$ (here we implicitly impose an equivalence relation saying that if we simultaneously act on the label of the vertex and on labels of outgoing half-edges by the same permutation, the corolla does not change);   
\item a ``source'', that is a vertex $v$ with one outgoing half-edge and no incoming half-edges, where $v$ uniquely corresponds to an element of $\{1,\ldots,q\}$, and the outgoing half-edge is labelled by an element $u\in V$;   
\item a ``sink'', that is a vertex $v$ with one incoming half-edge and no outgoing half-edges, where $v$ uniquely corresponds to an element of $\{1,\ldots,p\}$, and the incoming half-edge is labelled by an element $\xi\in V^*$.
\end{enumerate}
Corollas with $n_v$ outgoing half-edges are used to represent elements of the vector space
$sV^*\otimes\overline{\calO}(n_v)\otimes_{\k S_{n_v}}V^{\otimes n_v}$; they have homological degree $1$, and so reordering them creates a sign. The sources and the sinks appear in the natural order of their vertex labels, reproducing the vector space $(V^*)^{\otimes p}\otimes V^{\otimes q}$.

From the First and the Second Fundamental Theorems for $\mathfrak{gl}(V)$, it follows that to have non-zero invariants at all we must have $N=M$, and that for the ``stable range'' $\dim(V)\ge M=d+p$ the $\mathfrak{gl}(V)$-invariants in this module are isomorphic to a vector space with a basis of (equivalence classes of) graphs obtained by assembling the LEGO pieces together, that is directed decorated graphs $\Gamma$ obtained by matching all the incoming half-edges with all the outgoing half-edges. Specifically, we obtain that each vertex $v$ of $\Gamma$ is one of the following three types:
\begin{enumerate}
\item a corolla $v$ with one incoming half-edge and $n_v\ge 1$ outgoing half-edges labelled by an element from $\overline{\calO}(\text{out}(v))$;
\item a source with one outgoing half-edge uniquely corresponding to an element of $\{1,\ldots,q\}$;    
\item a sink with one incoming half-edge uniquely corresponding to an element of $\{1,\ldots,p\}$.  
\end{enumerate}
Two such graphs are equivalent if there is an isomorphism of them that agrees with all labels. As above, corollas have homological degree $1$, and so reordering them creates a sign. Such a graph corresponds to an invariant that pairs the copies of $V$ and $V^*$ according to the edges between the vertices in the graph. 

Similarly to Lemma \ref{lm:fuchs}, we see that connected graphs that appear as basis elements in the stable range are precisely the connected graphs appearing as basis elements of the wheeled bar construction $\mathsf{B}^{\circlearrowleft}(\calO)$. Indeed, since we consider $\calO$ as a wheeled operad with the zero trace map, graphs containing a cul-de-sac vertex make zero contribution to the result of applying the monad of rooted trees and wheels. Moreover, the labels of corollas belong to $\overline{\calO}$, and reordering corollas creates a sign, which is in agreement with the unordered tensor products in the definition of $\mathsf{B}^{\circlearrowleft}(\calO)=\mathsf{T}^{\circlearrowleft}(s\overline{\calO})$.

Finally, the differential 
\begin{multline*}
C^{(w)}_{d+1}(\Der^+(\calO(V)),\Hom(V^{\otimes p},V^{\otimes q}))^{\mathfrak{gl}(V)}\to \\ 
C^{(w)}_d(\Der^+(\calO(V)),\Hom(V^{\otimes p},V^{\otimes q}))^{\mathfrak{gl}(V)} 
\end{multline*}
admits an elegant formula in the stable range $\dim(V)>d+p$, where the strict inequality comes from the fact that we need to know basis elements in homological degree $d+1$ as well. First of all, note that since the action of $\Der(\calO(V))$ on $\mathfrak{gl}(V)$-modules is via the augmentation, the subalgebra $\Der^+(\calO(V))$ acts on all such modules trivially, and so the Chevalley--Eilenberg differential only uses the Lie bracket in $\Der^+(\calO(V))$. Next, we recall that the Lie algebra of derivations comes from a pre-Lie structure, and, if we think of derivations in terms of linear combinations of corollas representing them, the pre-Lie product $D\triangleleft D'$ is equal to the sum of elements corresponding to pairing one of the outputs of $D$ with the input of $D'$  and computing the corresponding contraction $V^*\otimes V\to \k$ simultaneously with the partial composition in $\calO$. For the complex of invariants, this means that the computation of each Lie bracket $[D,D']$ appearing in the standard formula for the Chevalley--Eilenberg differential will be made of two separate parts: the part $D\triangleleft D'$ via the edge contraction in the graphs where the input of the corolla corresponding to $D'$ is matched with one of the outputs of the corolla corresponding to $D$, and $D'\triangleleft D$ otherwise. Note that if an edge is a loop going from a vertex to itself, contracting that edge does not correspond to computing the Lie bracket and hence does not contribute to the differential. We see that the differential matches precisely the differential of $\mathsf{B}^{\circlearrowleft}(\calO)$: there the differential is also made of edge contractions, and contracting loops gives zero precisely because we consider $\calO$ as a wheeled operad with the zero trace map. (There are several standard ways to check that the signs match; the reader familiar with graph complexes already knows them, and the psychologically easiest way for other readers would be to dualize and consider the Chevalley--Eilenberg cohomological complex and the cobar construction; for each of them the differential is a derivation and hence is ``local'', that is applies to each corolla individually, and the fact that the two differentials agree on corollas is almost obvious.) 

Passing to disconnected graphs corresponds to computing tensor powers (for trees) and symmetric powers (for wheels) with respect to the Cauchy product, which is precisely the coPROP completion of the wheeled bar construction viewed as a wheeled cooperad. The statement on naturality is obvious.
\end{proof}

The theorem we just proved implies the following statement, which is the main result of this section.

\begin{theorem}\label{th:main1}
Let $\calO$ be an augmented operad, and $V$ a finite-dimensional vector space. The vector spaces 
 \[
H_\bullet(\Der^+(\calO(V)),\Hom(V^{\otimes \bullet_1},V^{\otimes \bullet_2}))^{\mathfrak{gl}(V)}
 \]
stabilize as $\dim(V)\to\infty$, with the stable value given by the coPROP completion of the wheeled cooperad $H_\bullet(\mathsf{B}^{\circlearrowleft}(\calO))$. This isomorphism is natural with respect to operad morphisms.
\end{theorem}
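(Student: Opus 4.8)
The plan is to obtain Theorem~\ref{th:main1} from the chain-level Theorem~\ref{th:graphcx1} by passing to homology; the only points requiring care are the interchange of homology with the functor of $\mathfrak{gl}(V)$-invariants and with the coPROP completion.

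First I would record that over a field of characteristic zero the functor of $\mathfrak{gl}(V)$-invariants is exact. Indeed, each chain group $C_d(\Der^+(\calO(V)),\Hom(V^{\otimes p},V^{\otimes q}))$ is, as established in the proof of Theorem~\ref{th:graphcx1}, a direct sum of subquotients of spaces of the form $V^{\otimes N}\otimes(V^*)^{\otimes M}$; these are rational $GL(V)$-representations, hence completely reducible, so passing to the trivial isotypic component is exact and commutes with direct sums. Consequently the invariants of the homology agree with the homology of the invariant complex,
\[
H_\bullet(\Der^+(\calO(V)),\Hom(V^{\otimes p},V^{\otimes q}))^{\mathfrak{gl}(V)}\cong H_\bullet\!\left(C_\bullet(\Der^+(\calO(V)),\Hom(V^{\otimes p},V^{\otimes q}))^{\mathfrak{gl}(V)}\right),
\]
so the left-hand side of the theorem is precisely the homology of the invariant Chevalley--Eilenberg complex appearing in Theorem~\ref{th:graphcx1}.

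Next I would fix a homological degree $d$, a weight $w$, and a bidegree $(p,q)$ and use that Theorem~\ref{th:graphcx1} is an isomorphism of complexes valid in a stable range. Computing $H^{(w)}_d$ involves only the three chain groups in degrees $d-1,d,d+1$ together with the two differentials between them, and the proof of Theorem~\ref{th:graphcx1} identifies both these groups and the differential as soon as $\dim(V)>d+p$ (the strict inequality being exactly what is needed to pin down degree $d+1$). Hence the homology in this tridegree is independent of $\dim(V)$ for all sufficiently large $V$ and equals the corresponding homology of the coPROP completion of $\mathsf{B}^{\circlearrowleft}(\calO)$; this proves stabilization and identifies the stable value with $H_\bullet(\calP^c(\mathsf{B}^{\circlearrowleft}(\calO)))$.

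Finally---and this is where the actual work lies---I would establish the interchange
\[
H_\bullet\!\left(\calP^c(\mathsf{B}^{\circlearrowleft}(\calO))\right)\cong \calP^c\!\left(H_\bullet(\mathsf{B}^{\circlearrowleft}(\calO))\right).
\]
In each fixed homological degree and bidegree the coPROP completion is a direct sum of Cauchy tensor powers of the rooted-tree part and Cauchy symmetric powers of the wheel part, and fixing the homological degree bounds the number of corollas, hence the number of Cauchy factors appearing in each summand is finite. Over a field of characteristic zero, homology commutes with direct sums and with finite tensor products by the Künneth theorem, while for the symmetric powers one uses $S^k(C)=(C^{\otimes k})_{S_k}$ together with the exactness of $S_k$-coinvariants in characteristic zero to get $H_\bullet(S^k(C))\cong S^k(H_\bullet(C))$. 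The main obstacle is then to check that these term-by-term Künneth isomorphisms are compatible with all the structure maps of the coPROP---the cooperadic cocomposition, the comodule structure on the wheel part, and the cotrace---so that they glue into an isomorphism of wheeled cooperads. Granting this, the stable value is identified with the coPROP completion of $H_\bullet(\mathsf{B}^{\circlearrowleft}(\calO))$, and naturality in $\calO$ is inherited directly from Theorem~\ref{th:graphcx1} together with the functoriality of the Künneth isomorphism.
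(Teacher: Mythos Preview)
Your proposal is correct and follows essentially the same route as the paper: commute homology with $\mathfrak{gl}(V)$-invariants by semisimplicity, use the stable range from Theorem~\ref{th:graphcx1} to identify the invariant complex degree by degree, then invoke K\"unneth to pass homology through the coPROP completion. The one unnecessary worry is your ``main obstacle'': the theorem only asserts an identification of the stable homology with the underlying bigraded vector space of $\calP^c(H_\bullet(\mathsf{B}^{\circlearrowleft}(\calO)))$, not an isomorphism of wheeled coPROPs, so no compatibility with cocomposition or cotrace needs to be checked---the termwise K\"unneth isomorphisms already suffice.
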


\begin{proof}
As we saw in the proof of Theorem \ref{th:graphcx1}, if we fix $w,d,p,q$, then as long as $\dim(V)>d+p$, the components of weight $w$ and homological degrees $d$ and $d+1$ of the complex $C_\bullet(\Der^+(\calO(V)),\Hom(V^{\otimes p},V^{\otimes q}))^{\mathfrak{gl}(V)}$ are nonzero only for $p=w+q$ and are isomorphic to the corresponding components of the wheeled bar construction $\mathsf{B}^{\circlearrowleft}(\calO)$; moreover, the differential of the Chevalley--Eilenberg complex is identified under that isomorphism with the differential of the wheeled bar construction. Since computing homology commutes with taking $\mathfrak{gl}(V)$-invariants, we conclude that for such $w,d,p,q$, we have 
 \[
H^{(w)}_d(\Der^+(\calO(V)),\Hom(V^{\otimes p},V^{\otimes q}))^{\mathfrak{gl}(V)}\cong H^{(w)}_d(\mathsf{B}^{\circlearrowleft}(\calO)).
 \] 
It remains to apply the K\"unneth formula to obtain the desired result for the coPROP completion.
\end{proof}

Let us show how the classical Loday--Quillen--Tsygan theorem arises in this context.

\begin{corollary}[Loday--Quillen \cite{MR0780077}, Tsygan \cite{MR0695483}]\label{cor:LQT}
Let $A$ be a unital $\k$-algebra. We have a Hopf algebra isomorphism
 \[
H_\bullet(\mathfrak{gl}(A),\k)\cong S^c(HC_{\bullet-1}(A)).     
 \]
\end{corollary}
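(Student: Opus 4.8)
The plan is to specialise Theorem~\ref{th:main1} to the operad $\calO$ concentrated in arity one with $\calO(1)=A$, as in Example~\ref{ex:divforalg}. I first treat an augmented algebra $A=\k\oplus I$ with augmentation ideal $I$, and indicate at the end how the general unital case reduces to it. For this $\calO$ and $V=\k^{n}$ one has $\Der(\calO(V))\cong\mathfrak{gl}_{n}(A)$; the augmentation $\Der(\calO(V))\twoheadrightarrow\mathfrak{gl}(V)$ becomes $\mathfrak{gl}_{n}(A)\twoheadrightarrow\mathfrak{gl}_{n}(\k)$ induced by $A\to\k$, and its kernel is $\Der^{+}(\calO(V))\cong\mathfrak{gl}_{n}(I)$. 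Passing to the limit $n\to\infty$, the homology $H_\bullet(\mathfrak{gl}(A),\k)$ is a connected graded Hopf algebra for the coproduct coming from block sum $\mathfrak{gl}_{m}\oplus\mathfrak{gl}_{n}\to\mathfrak{gl}_{m+n}$; as $\k$ has characteristic zero and the homology is graded-commutative and cocommutative, the Milnor--Moore structure theorem identifies it with the cofree cocommutative coalgebra $S^{c}(\mathrm{Prim})$ on its primitives. It therefore suffices to compute the primitives and to match the two Hopf structures, and the theorem will supply the primitives arising from the augmentation ideal.

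Next I apply Theorem~\ref{th:main1} to the summand $\bullet_{1}=\bullet_{2}=0$, i.e.\ with trivial coefficients, obtaining a stable isomorphism between $H_\bullet(\Der^{+}(\calO(V)),\k)^{\mathfrak{gl}(V)}$ and the $(0,0)$-component of $\calP^{c}(H_\bullet(\mathsf{B}^{\circlearrowleft}(\calO)))$. By Example~\ref{ex:wheeledcomplalgbar}, $\mathsf{B}^{\circlearrowleft}(\calO)=\mathsf{B}(A)\oplus\Cyc(sI)$, with homology $H_\bullet(\mathsf{B}(A))\oplus HC_{\bullet-1}(I)$. In the coPROP completion the $(0,0)$-component receives contributions only from closed (arity-zero) graphs: since the trace map is zero there are no cul-de-sac vertices, the tree part $H_\bullet(\mathsf{B}(A))$ lives in positive arity, and the only connected arity-zero graphs are wheels, contributing the wheel homology $HC_{\bullet-1}(I)$; disconnected arity-zero graphs are unordered collections of wheels, so the component is the free graded-commutative algebra on the wheel homology. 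Thus
\[
H_\bullet(\mathfrak{gl}(I),\k)^{\mathfrak{gl}(\k)}\cong S\big(HC_{\bullet-1}(I)\big).
\]

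Finally I recombine $\mathfrak{gl}(\k)$ with $\mathfrak{gl}(I)$. Since $\mathfrak{gl}(A)=\mathfrak{gl}(\k)\ltimes\mathfrak{gl}(I)$ with $\mathfrak{gl}(\k)$ reductive, and each $H_\bullet(\mathfrak{gl}(I),\k)$ is stably a semisimple polynomial $\mathfrak{gl}(V)$-module of vanishing central character, the Hochschild--Serre spectral sequence degenerates and Whitehead-type vanishing leaves only the $\mathfrak{gl}(\k)$-invariants, giving
\[
H_\bullet(\mathfrak{gl}(A),\k)\cong H_\bullet(\mathfrak{gl}(\k),\k)\otimes H_\bullet(\mathfrak{gl}(I),\k)^{\mathfrak{gl}(\k)}.
\]
The factor $H_\bullet(\mathfrak{gl}(\k),\k)=S(HC_{\bullet-1}(\k))$ is the classical $A=\k$ computation of the stable homology of $\mathfrak{gl}_\infty(\k)$; combining it with the previous paragraph and the splitting $HC_\bullet(A)\cong HC_\bullet(\k)\oplus HC_\bullet(I)$ for augmented algebras yields $H_\bullet(\mathfrak{gl}(A),\k)\cong S^{c}(HC_{\bullet-1}(A))$, the compatibility with both Hopf structures upgrading the linear isomorphism to a Hopf-algebra isomorphism; equivalently, the primitives are $HC_{\bullet-1}(\k)\oplus HC_{\bullet-1}(I)=HC_{\bullet-1}(A)$. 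For a general unital $A$ the same conclusion follows from the relative Hochschild--Serre argument for $\mathfrak{gl}(\k)\hookrightarrow\mathfrak{gl}(A)$, the point being that closing matrix entries into a cycle now produces decorations by all of $A$, so the wheel homology is $HC_{\bullet-1}(A)$ outright. I expect the recombination step to be the main obstacle: the theorem only sees $\Der^{+}$, i.e.\ the augmentation ideal, so one must separately supply the reductive factor $\mathfrak{gl}(\k)$ (the $A=\k$ base case) and verify that the passage from the cyclic homology of $I$ to that of $A$ respects both the product and the coproduct.
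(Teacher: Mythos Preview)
Your argument for augmented $A$ is correct, but it takes a detour that the paper avoids by a simple trick. The paper does \emph{not} apply Theorem~\ref{th:main1} to the operad with $\calO(1)=A$; instead it applies it to $\calO(1)=A_+$, the unitalisation of $A$ (adjoining a new unit even though $A$ already has one). Then $A_+$ is canonically augmented with $\overline{A_+}=A$, so $\Der^+(\calO(V))\cong\mathfrak{gl}_n(A)$ directly, with no auxiliary ideal $I$ in sight. Because the augmentation ideal $A$ is itself unital, the operadic part $\mathsf{B}(A_+)$ is acyclic and the wheel part already gives $HC_{\bullet-1}(A)$, so the coPROP $(0,0)$-component is $S^c(HC_{\bullet-1}(A))$ on the nose. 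Finally, since $\mathfrak{gl}_n(\k)\subset\mathfrak{gl}_n(A)$ and any Lie algebra acts trivially on its own homology, the $\mathfrak{gl}(V)$-invariants are the whole homology; no Hochschild--Serre argument, no separate $A=\k$ base case, and no splitting $HC_\bullet(A)\cong HC_\bullet(\k)\oplus HC_\bullet(I)$ are needed.

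Your handling of the general unital (non-augmented) case is where a genuine gap remains. Theorem~\ref{th:main1} requires an augmented operad, and you have not said which augmented operad you would use when $A$ has no augmentation; the sentence ``closing matrix entries into a cycle now produces decorations by all of $A$'' is suggestive but does not supply one. The $A_+$ device is precisely what fills this gap: it manufactures an augmentation with augmentation ideal equal to $A$ itself, so the theorem applies uniformly to every unital $A$. Your recombination step via Hochschild--Serre and the base case $H_\bullet(\mathfrak{gl}(\k),\k)=S(HC_{\bullet-1}(\k))$ is valid when $A$ is augmented, but it is strictly more work and does not by itself extend to the general case.
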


\begin{proof}
Let us consider the algebra $A_+$ which is obtained from $A$ by adjoining a unit (even though $A$ is already unital). This algebra has the canonical augmentation for which the augmentation ideal $\overline{A_+}$ is $A$. If we consider $A_+$ as an operad $\calO$ concentrated in arity one,  Example \ref{ex:wheeledcomplalgbar}, 
 \[
H_\bullet(\mathsf{B}^{\circlearrowleft}(\calO)_\o)=0, \quad H_\bullet(\mathsf{B}^{\circlearrowleft}(\calO)_\w)\cong
 HC_{\bullet-1}(\overline{A_+})\cong
 HC_{\bullet-1}(A),
 \]
since the augmentation ideal $\overline{\calO}=\overline{A_+}$ is our unital algebra $A$ (in particular, the operadic part of the bar construction is acyclic).
Moreover, $HC_{\bullet-1}(A)$ is the $(0,0)$-component of $H_\bullet\mathsf{B}^{\circlearrowleft}(\calO)_\w)$, and hence the coPROP completion of the homology of the bar construction is precisely $S^c(HC_{\bullet-1}(A))$ supported at the $(0,0)$-component. Theorem \ref{th:main1} now implies that the stable limit of
 \[
H_\bullet(\Der^+(\calO(V)),\k)^{\mathfrak{gl}(V)}
 \]
is isomorphic to $S^c(HC_{\bullet-1}(A))$. According to Example \ref{ex:divforalg}, in the case $V=\k^n$, we have $\Der^+(\calO(V))\cong\mathfrak{gl}_n(A)$, so  
 \[
\lim_{n\to\infty}(H_\bullet(\mathfrak{gl}_n(A),\k))^{\mathfrak{gl}_n(\k)}\cong S^c(HC_{\bullet-1}(A)).
 \]
Since the algebra $A$ is unital, the Lie algebra $\mathfrak{gl}_n(A)$ contains $\mathfrak{gl}_n(\k)$ as a Lie subalgebra. It is known \cite[Sec.~1.6.C]{MR0874337} that the action of any Lie algebra in its homology is trivial, so we have
 \[
\lim_{n\to\infty}H_\bullet(\mathfrak{gl}_n(A),\k)\cong\lim_{n\to\infty}(H_\bullet(\mathfrak{gl}_n(A),\k))^{\mathfrak{gl}_n(\k)}\cong S^c(HC_{\bullet-1}(A)).
 \]

This gives the coalgebra structure on $H_\bullet(\mathfrak{gl}(A),\k)$, and the Hopf algebra isomorphism is easy to obtain by recalling that the product on $H_\bullet(\mathfrak{gl}(A))$ comes from the canonical embeddings $\mathfrak{gl}_n(\k)\oplus\mathfrak{gl}_m(\k)\hookrightarrow\mathfrak{gl}_{n+m}(\k)$.  
\end{proof}

\begin{remark}
The last sentence of the above proof suggests that the coPROP structure on the stable homology computed in Theorem \ref{th:main1}  arises naturally if we work in the category $\calS(\k)$ of \cite{Powell21}; this observation will be explored elsewhere. 
\end{remark}

Let us also record a result relating the computation of Theorem \ref{th:main1} to calculation of the homology of the Lie algebra of all derivations with bivariant coefficients.

\begin{proposition}\label{prop:thmain1}
Let $\calO$ be an augmented operad, and $V$ a finite-dimensional vector space. The homology 
 \[
H_\bullet(\Der(\calO(V)),\Hom(V^{\otimes p},V^{\otimes q})) 
 \] 
is isomorphic to the cofree $H_\bullet(\mathfrak{gl}(\k))$-comodule cogenerated by 
 \[
H_\bullet(\Der^+(\calO(V)),\Hom(V^{\otimes p},V^{\otimes q}))^{\mathfrak{gl}(V)}. 
 \] 
The latter is isomorphic to  
 \[
(H_\bullet(\Der^+(\calO(V)),\k)\otimes\Hom(V^{\otimes p},V^{\otimes q}))^{\mathfrak{gl}(V)}.
 \] 
\end{proposition}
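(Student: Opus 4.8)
The plan is to exploit the semidirect product decomposition
\[
\Der(\calO(V))\cong \Der^+(\calO(V))\rtimes\mathfrak{gl}(V),
\]
which holds because $\mathfrak{gl}(V)$ is a Lie subalgebra of $\Der(\calO(V))$ mapping isomorphically onto the quotient $\Der(\calO(V))/\Der^+(\calO(V))$. First I would run the Hochschild--Serre spectral sequence of the ideal $\Der^+(\calO(V))$ with coefficients in $M:=\Hom(V^{\otimes p},V^{\otimes q})$:
\[
E^2_{s,t}=H_s(\mathfrak{gl}(V),H_t(\Der^+(\calO(V)),M))\Longrightarrow H_{s+t}(\Der(\calO(V)),M).
\]
Since the $\Der(\calO(V))$-action on any $\mathfrak{gl}(V)$-module is via the augmentation, the ideal $\Der^+(\calO(V))$ acts trivially on $M$, so its Chevalley--Eilenberg complex factors as $C_\bullet(\Der^+(\calO(V)),\k)\otimes M$ and hence $H_t(\Der^+(\calO(V)),M)\cong H_t(\Der^+(\calO(V)),\k)\otimes M$ as $\mathfrak{gl}(V)$-modules with the diagonal action. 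Taking $\mathfrak{gl}(V)$-invariants of this identity already yields the second isomorphism asserted in the proposition.

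Next I would use reductivity of $\mathfrak{gl}(V)$. Each weight-graded piece of $\Der^+(\calO(V))$ and of $M$ is a finite-dimensional rational $GL(V)$-representation, so $H_t(\Der^+(\calO(V)),M)$ is a (possibly infinite) direct sum of finite-dimensional irreducibles and is completely reducible over $\mathfrak{gl}(V)$. The vanishing lemma for reductive Lie algebras then gives, for any completely reducible $N$,
\[
H_s(\mathfrak{gl}(V),N)\cong H_s(\mathfrak{gl}(V),\k)\otimes N^{\mathfrak{gl}(V)},
\]
the point being that the Casimir of the semisimple part together with the central scalar force $H_s(\mathfrak{gl}(V),-)$ to vanish on every nontrivial isotypic component; this argument is termwise over arbitrary direct sums of irreducibles, so no finiteness of $N$ is needed. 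Applying it identifies the $E^2$-page as $H_s(\mathfrak{gl}(V),\k)\otimes H_t(\Der^+(\calO(V)),M)^{\mathfrak{gl}(V)}$.

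Finally I would prove degeneration at $E^2$, which is where I expect the real work to lie. The clean route uses the splitting: since $\mathfrak{gl}(V)$ is reductive and acts completely reducibly, taking invariants is exact, so $H_t(\Der^+(\calO(V)),M)^{\mathfrak{gl}(V)}$ is the relative Chevalley--Eilenberg homology $H_t(\Der(\calO(V)),\mathfrak{gl}(V);M)$, computed by the complex $(\Lambda^\bullet(\Der^+(\calO(V)))\otimes M)^{\mathfrak{gl}(V)}$. One then shows that the multiplication map
\[
C_\bullet(\mathfrak{gl}(V),\k)\otimes\bigl(\Lambda^\bullet(\Der^+(\calO(V)))\otimes M\bigr)^{\mathfrak{gl}(V)}\longrightarrow C_\bullet(\Der(\calO(V)),M)
\]
induced by the section is a quasi-isomorphism; equivalently, comparison with the visibly degenerate Hochschild--Serre spectral sequence of the split sub-extension defined by $\mathfrak{gl}(V)\hookrightarrow\Der(\calO(V))$ forces all higher differentials to vanish. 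Working over a field there are no extension problems, and the entire computation respects the weight grading with finite-dimensional graded pieces, so convergence is unproblematic. Reading off the result gives $H_\bullet(\Der(\calO(V)),M)\cong H_\bullet(\mathfrak{gl}(V),\k)\otimes H_\bullet(\Der^+(\calO(V)),M)^{\mathfrak{gl}(V)}$, and the comodule structure over the coalgebra $H_\bullet(\mathfrak{gl}(V),\k)$ is induced by naturality (the projection $\Der(\calO(V))\twoheadrightarrow\mathfrak{gl}(V)$ together with the Chevalley--Eilenberg diagonal), which exhibits the target as the cofree comodule cogenerated by the invariant homology. The main obstacle is precisely the verification that the cross terms of the differential coming from $[\mathfrak{gl}(V),\Der^+(\calO(V))]\subseteq\Der^+(\calO(V))$ contribute nothing after passing to $\mathfrak{gl}(V)$-invariants, i.e.\ the chain-level quasi-isomorphism and consequent degeneration; everything preceding it is essentially formal.
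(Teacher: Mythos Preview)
Your proposal is correct and follows essentially the same route as the paper: Hochschild--Serre for the ideal $\Der^+(\calO(V))$, triviality of its action on $M$, and reductivity of $\mathfrak{gl}(V)$ to identify $E^2$ as $H_\bullet(\mathfrak{gl}(V),\k)\otimes H_\bullet(\Der^+(\calO(V)),M)^{\mathfrak{gl}(V)}$. The only substantive difference is in how degeneration at $E^2$ is established. The paper makes the comodule structure explicit on the chain level (via the augmentation $\Der(\calO(V))\twoheadrightarrow\mathfrak{gl}(V)$ composed with the projection $C_\bullet(\mathfrak{gl}(V),\k)\to C_\bullet(\mathfrak{gl}(V),\k)^{\mathfrak{gl}(V)}\cong H_\bullet(\mathfrak{gl}(V),\k)$), observes that the spectral sequence is one of $H_\bullet(\mathfrak{gl}(V),\k)$-comodules, and then checks that the cogenerators $E^2_{0,\bullet}$ survive by exhibiting a concrete chain-level left inverse: the map that kills any wedge containing a factor from $\mathfrak{gl}(V)$ and projects what remains to $\mathfrak{gl}(V)$-invariants. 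Your alternative, showing that the multiplication map from $C_\bullet(\mathfrak{gl}(V),\k)\otimes(\Lambda^\bullet\Der^+(\calO(V))\otimes M)^{\mathfrak{gl}(V)}$ into the full complex is a quasi-isomorphism, is the other side of the same coin and equally valid; the paper's retraction is in fact precisely a one-sided inverse witnessing that quasi-isomorphism.
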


In the particular case $p=q=0$, our result concerns the homology of the Lie algebra $\Der(\calO(V))$ with trivial coefficients. Since the action of the subalgebra $\mathfrak{gl}(V)\subset\Der(\calO(V))$ on the homology is trivial, this homology is easily seen to be covered by the Loday--Quillen--Tsygan theorem in the case $A=\calO(1)$, as already remarked in~\cite{Mah03}. For other values of $p,q$, the situation becomes more complex, and, as a consequence, the Lie algebra $\Der^+(\calO(V))$ has many ``interesting'' homology classes even in the case of trivial coefficients, as we shall see below. An interesting natural question that will be addressed elsewhere is to compute the homology of the Lie algebra $\Der(\calO(V))$ for various non-augmented operads $\calO$; note that for $\calO=\uCom$, that homology can be identified with homology of a natural topological space, see \cite{MR0266195} for details. 

\begin{proof}
The argument is very similar to its version in the case $\calO=\Com$, where it essentially goes back to the work of Losik \cite{MR0312518} (see also \cite[Th.~2.2.8]{MR0874337}). Recall \cite[Th.~1.5.1]{MR0874337} that the Hochschild--Serre spectral sequence for a Lie algebra $\mathfrak{g}$, its Lie subalgebra $\mathfrak{h}$, and a $\mathfrak{g}$-module $M$ arises from the filtration $F_\bullet$ of the Chevalley--Eilenberg complex $C_\bullet(\mathfrak{g},M)=S(s\mathfrak{g})\otimes M$ for which 
 \[
F_mC_{m+n}(\mathfrak{g},M)= S^m(s\mathfrak{h})\otimes S^n(s\mathfrak{g})\otimes M,
 \]
or, in plain words, we consider linear combinations of chains involving at least $m$ factors from the subalgebra $\mathfrak{h}$. In general, only the first page
 \[
E^1_{m,n}=H_n(\mathfrak{h},S^m(s\mathfrak{g}/\mathfrak{h})\otimes M) 
 \]
is tractable, but in our case, there are several special features that make the computation possible. First of all, the Lie subalgebra $\mathfrak{h}=\Der^+(\calO(V))$ is an ideal of $\mathfrak{g}=\Der(\calO(V))$, and so it acts trivially on
 \[
\Der(\calO(V))/\Der^+(\calO(V))\cong\mathfrak{gl}(V).
 \] 
Moreover, $\Der^+(\calO(V))$ is the kernel of the augmentation map to $\mathfrak{gl}(V)$, and hence acts trivially on the module $M=\Hom(V^{\otimes p},V^{\otimes q})$. Thus we immediately obtain
 \[
E^2_{m,n}=H_m(\mathfrak{gl}(V),H_n(\Der^+(\calO(V)),\k)\otimes\Hom(V^{\otimes p},V^{\otimes q})).
 \]
Note that for any $\mathfrak{gl}(V)$-module $R$ that is a submodule of a direct sum of several copies of $V^{\otimes N}\otimes (V^*)^{\otimes M}$, \cite[Th.~2.1.2]{MR0874337} ensures that we have, identifying the $\mathfrak{gl}(V)$-invariants with the corresponding coinvariants,     
 \[
H_\bullet(\mathfrak{gl}(V),R)\cong H_\bullet(\mathfrak{gl}(V),\k)\otimes R^{\mathfrak{gl}(V)},
 \]
which shows that  
 \[
E^2_{m,n}\cong H_m(\mathfrak{gl}(V),\k)\otimes \left(H_n(\Der^+(\calO(V)),\k)\otimes\Hom(V^{\otimes p},V^{\otimes q})\right)^{\mathfrak{gl}(V)}.
 \]
Let us show that our spectral sequence abuts on the second page. Note that the $C_\bullet(\Der(\calO(V)),\k)$-comodule structure on the complex 
 \[
C_\bullet(\Der(\calO(V)),\Hom(V^{\otimes p},V^{\otimes q})),
 \]
combined with the augmentation $\Der(\calO(V))\twoheadrightarrow \mathfrak{gl}(V)$, makes that complex a $C_\bullet(\mathfrak{gl}(V),\k)$-comodule. We can further project to invariants 
 \[
C_\bullet(\mathfrak{gl}(V),\k)^{\mathfrak{gl}(V)}\cong H_\bullet(\mathfrak{gl}(V),\k)
 \] 
and consider the Chevalley--Eilenberg complex $C_\bullet(\Der(\calO(V)),\Hom(V^{\otimes p},V^{\otimes q}))$ as a $H_\bullet(\mathfrak{gl}(V),\k)$-comodule. %It is well known that the subcoalgebra of $C_\bullet(\mathfrak{gl}(V),\k)^{\mathfrak{gl}(V)}$ is cofree on the standard cycles of odd degrees in $C_\bullet(\mathfrak{gl}(V),\k)$
%dual to the trace functionals 
% \[
%A_1,\ldots, A_{2k+1}\mapsto \sum_{\sigma\in S_{2k+1}}(-1)^{\sigma}\tr(A_{\sigma(1)}\cdots A_{\sigma(2k+1)})
% \]
%in $C^\bullet(\mathfrak{gl}(V),\k)$. 
Moreover, since the Hochschild--Serre filtration is defined by bounding from below the number of factors from $\Der^+(\calO(V))$, the $H_\bullet(\mathfrak{gl}(V),\k)$-comodule structure is compatible with that filtration, making our spectral sequence a sequence of $H_\bullet(\mathfrak{gl}(V),\k)$-comodules and their maps. Moreover, our analysis of the $E^2$ term shows that it is cogenerated by 
 \[
E^2_{0,j}\cong \left(H_j(\Der^+(\calO(V)),\k)\otimes \Hom(V^{\otimes p},V^{\otimes q})\right)^{\mathfrak{gl}(V)},
 \] 
so the proof will be complete if we show that all these elements survive on all pages of our spectral sequence. This is equivalent to proving that the natural map
\begin{multline*}
\left(H_\bullet(\Der^+(\calO(V)),\k)\otimes \Hom(V^{\otimes p},V^{\otimes q})\right)^{\mathfrak{gl}(V)}\to\\ 
H_\bullet(\Der(\calO(V)),\Hom(V^{\otimes p},V^{\otimes q})) 
\end{multline*} 
is injective. To show that, we note that the map   
\begin{multline*} 
C_\bullet(\Der(\calO(V)),\Hom(V^{\otimes p},V^{\otimes q})) \to\\
\left(C_\bullet(\Der^+(\calO(V)),\k)\otimes \Hom(V^{\otimes p},V^{\otimes q})\right)^{\mathfrak{gl}(V)}
\end{multline*} 
that sends wedge products involving at least one element of $\mathfrak{gl}(V)$ to zero and projects to invariants is manifestly its left inverse on the level of homology. 
\end{proof}

\subsection{Wheeled bar homology of operads and cyclic homology}

To convince the reader that Theorem \ref{th:main1} is not a mere reformulation of a complicated problem, let us establish a new somewhat general result about the wheeled bar homology of an operad $\calO$. For that, let us denote
 \[
\partial(\calO)_0:=\partial(\calO)\circ_{\calO}\mathbbold{1},
 \]
where $\mathbbold{1}$ is equipped with the left $\calO$-module structure given by the augmentation. The meaning of this construction from the point of view of algebras over operads is as follows: $\partial(\calO)_0$ is the Schur functor of universal multiplicative enveloping algebras of trivial $\calO$-algebras (algebras on which the action of $\calO$ factors through the augmentation); in particular, it is a twisted associative algebra.

\begin{theorem} \label{th:calchom}
Let $\calO$ be an augmented operad for which $\partial(\calO)$ is free as a right $\calO$-module. We have 
 \[
H_\bullet(\mathsf{B}^{\circlearrowleft}(\calO)_\o)\cong H_\bullet(\mathsf{B}(\calO)),\quad H_\bullet(\mathsf{B}^{\circlearrowleft}(\calO)_\w)\cong HC_{\bullet-1}(\partial(\overline{\calO})_0).
 \]
\end{theorem}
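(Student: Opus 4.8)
The plan is to treat the two summands separately, exploiting the fact that for the trivial wheeled operad $\calO\oplus 0$ the wheeled bar complex $\mathsf{B}^{\circlearrowleft}(\calO)=\calT^{\circlearrowleft}(s\overline{\calO}\oplus 0)$ splits, as a complex, into its operadic part and its wheeled part. Since the cogenerators have vanishing $\w$-component, the cul-de-sac summand disappears, and the differential cannot carry anything from the $\w$-part back to the $\o$-part, so indeed $\mathsf{B}^{\circlearrowleft}(\calO)=\mathsf{B}^{\circlearrowleft}(\calO)_\o\oplus\mathsf{B}^{\circlearrowleft}(\calO)_\w$. The operadic part is, by the very definition of $\calT^{\circlearrowleft}$, the complex $\calT(s\overline{\calO})=\mathsf{B}(\calO)$ with the usual bar differential; this gives the first isomorphism immediately (in fact an equality of complexes), and uses no hypothesis on $\calO$. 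It remains to compute the homology of the wheel part, which by the description in Section~\ref{sec:wheeledbar} is
\[
\mathsf{B}^{\circlearrowleft}(\calO)_\w=\Cyc(s\partial(\overline{\calO}))\circ\calT(s\overline{\calO}),
\]
equipped with the total differential $d_{\mathrm{cyc}}+d_{\mathrm{bar}}$, where $d_{\mathrm{bar}}$ is the differential of the right $\calO$-module bar construction $\mathsf{B}(\Cyc(s\partial(\overline{\calO})),\calO)$ and $d_{\mathrm{cyc}}$ is the Connes differential on cyclic words; the trace component of the differential vanishes because the trace map of $\calO\oplus 0$ is zero. Since $\partial(\calO)$ is an $\Ass$--$\calO$-bimodule, the product forming the cyclic words commutes with the right $\calO$-action, so $d_{\mathrm{cyc}}$ is a map of right $\calO$-modules and $\mathsf{B}^{\circlearrowleft}(\calO)_\w$ is the total complex of a bicomplex.

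Next I would extract the consequences of the freeness hypothesis. The constant summand $\partial(\mathbbold{1})\cong\mathbf{1}$ of $\partial(\calO)$ splits off as a free right module, so $\partial(\overline{\calO})\cong X\circ\calO$ for some species $X$. Using that the induction functor $Y\mapsto Y\circ\calO$ is strong monoidal for the Cauchy product, every Cauchy power $(s\partial(\overline{\calO}))^{\otimes n}$ is again a free right $\calO$-module, and its cyclic coinvariants $\Cyc^n(s\partial(\overline{\calO}))$ is, in characteristic zero, a retract of a free module, hence flat. Therefore the bar construction $\mathsf{B}(\Cyc^n(s\partial(\overline{\calO})),\calO)$ is acyclic away from bar-degree zero, where it computes the indecomposables $\Cyc^n(s\partial(\overline{\calO}))\circ_{\calO}\mathbbold{1}$. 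Using strong monoidality of $-\circ_{\calO}\mathbbold{1}$ on free modules, together with the fact that $-\circ_{\calO}\mathbbold{1}$ commutes with cyclic coinvariants (an exact operation in characteristic zero), I identify this with $\Cyc^n(s\partial(\overline{\calO})_0)$. Here it is again essential that the $\mu$-algebra structure and the $\rho$-module structure on $\partial(\calO)$ commute, so that $\partial(\overline{\calO})_0=\partial(\overline{\calO})\circ_{\calO}\mathbbold{1}$ retains a twisted associative algebra structure and $d_{\mathrm{cyc}}$ descends to the Connes differential of that algebra.

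With these inputs the computation is a collapsing spectral sequence. I would filter the bicomplex so that the differential on the $E^0$-page is $d_{\mathrm{bar}}$. By the flatness just established, the $E^1$-page is concentrated in bar-degree zero and equals the cyclic complex $\Cyc(s\partial(\overline{\calO})_0)$ with the remaining differential $d^1=d_{\mathrm{cyc}}$. Hence $E^2=H_\bullet(\Cyc(s\partial(\overline{\calO})_0),d_{\mathrm{cyc}})$; the spectral sequence is concentrated in a single filtration degree from $E^1$ on, so it degenerates and
\[
H_\bullet(\mathsf{B}^{\circlearrowleft}(\calO)_\w)\cong H_\bullet(\Cyc(s\partial(\overline{\calO})_0))\cong HC_{\bullet-1}(\partial(\overline{\calO})_0),
\]
where the last isomorphism is the definition $HC_\bullet(A)=H_\bullet(s^{-1}\Cyc(sA))$ applied to the twisted associative algebra $A=\partial(\overline{\calO})_0$. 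As a sanity check, when $\calO$ is an augmented algebra concentrated in arity one this recovers Example~\ref{ex:wheeledcomplalgbar}: there $\partial(\overline{\calO})$ carries the trivial $\rho$-action, so $\partial(\overline{\calO})_0=\partial(\overline{\calO})$ is the augmentation ideal itself and the tree part is trivial.

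The main obstacle I anticipate is the second paragraph: carefully establishing that the Cauchy product interacts with freeness and with the indecomposables functor as claimed, namely that Cauchy powers of a free right $\calO$-module are flat and that $(s\partial(\overline{\calO}))^{\otimes n}\circ_{\calO}\mathbbold{1}\cong(s\partial(\overline{\calO})_0)^{\otimes n}$. This is precisely where freeness of $\partial(\calO)$ is used — it is exactly what forces the bar-direction of the bicomplex to collapse — and it demands scrupulous bookkeeping of the distinction between the two commuting structures on $\partial(\calO)$, the $\rho$-module structure governing the bar resolution and the $\mu$-algebra structure governing the cyclic differential, so that the surviving $E^2$-term is genuinely the cyclic homology of the twisted associative algebra $\partial(\overline{\calO})_0$.
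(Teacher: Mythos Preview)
Your proposal is correct and follows essentially the same approach as the paper: both set up the bicomplex on the wheel part with the anticommuting differentials $d_{\mathrm{cyc}}$ and $d_{\mathrm{bar}}$, filter so that the bar differential appears first, use freeness of $\partial(\calO)$ (hence of $\partial(\overline{\calO})$ and its Cauchy powers) to collapse that direction to $\Cyc(s\partial(\overline{\calO})_0)$, and then identify the surviving differential with that of the cyclic complex. The paper phrases the filtration as ``by the number of edges involved in the wheel'' and invokes the homological criterion of freeness from \cite{MR4300233} in place of your explicit flatness-of-retracts argument, but the content is identical.
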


\begin{proof}
Since we have $\mathsf{B}^{\circlearrowleft}(\calO)_\o=\mathsf{B}(\calO)$, the first claim is tautological. Let us consider $\mathsf{B}^{\circlearrowleft}(\calO)_\w$. As explained in Section \ref{sec:wheeledbar}, the underlying species of this chain complex is 
 \[
\Cyc(s\partial(\overline{\calO}))\circ \calT(s\calO)
 \]
with the differential that is the sum of the differential of $\Cyc(s\partial(\overline{\calO}))$ and the differential of $\mathsf{B}(\Cyc(s\partial(\overline{\calO})),\calO)$; the last part of the differential is zero since all trace maps are zero in our case. These two differentials anti-commute (this reflects the fact that the structures of a twisted associative algebra and a right $\calO$-module on $\partial(\calO)$ commute). To prove the statement of the theorem, let us consider the increasing filtration of our complex by the number of edges involved in the wheel. The first differential decreases this number, and the second differential preserves it, so in the associated graded complex, only the second differential survives, making our complex isomorphic to 
 \[
\Cyc(s\mathsf{B}(\partial(\overline{\calO}),\calO)),
 \]
where the differential is that of the operadic bar construction with coefficients in the right module $\partial(\overline{\calO})$.

Note that the species $\partial(\calO)_0$ is isomorphic to the minimal species of generators of $\partial(\calO)$ as a right $\calO$-module, so our assumption implies that we have a right module isomorphism 
 \[
\partial(\calO)\cong \partial(\calO)_0\circ \calO.
 \] 
Moreover, $\partial(\mathbbold{1})$ has no slots for the right module action, so we have a splitting of right $\calO$-modules
 \[
\partial(\calO)\cong\partial(\mathbbold{1})\oplus\partial(\overline{\calO})\cong\partial(\mathbbold{1})\oplus\partial(\overline{\calO}_0)\circ \calO.
 \]
In particular, the homological criterion of freeness \cite{MR4300233} shows that the homology of $\mathsf{B}(\partial(\overline{\calO}),\calO)$ is concentrated in degree zero and identifies with $\partial(\overline{\calO})_0$. As a consequence, the spectral sequence abuts on the following page, where one has to compute the homology of the differential induced by the first differential above, that is the differential of the cyclic complex of the twisted associative algebra $\partial(\overline{\calO})_0$, completing the proof.
\end{proof}

The key aspect on which Theorem \ref{th:calchom} relies is the freeness property of $\partial(\calO)$ as a right $\calO$-module; this property previously appeared in work of Khoroshkin~\cite{MR4381941} who used the result of the author of the present paper and Tamaroff \cite{MR4300233} to show that this condition is necessary and sufficient for the functorial PBW property of multiplicative universal enveloping algebras of $\calO$-algebras. Note that in \cite{MR4381941} it is shown that the operad of Poisson algebras does not have the PBW property, and the same operad exhibits nontrivial cycles in the wheeled completion of its bar construction, which is an unpublished result obtained independently by Bruinsma (in a bachelor thesis at the University of Amsterdam in 2010, see \cite{bruinsma2023cohomology}) and Khoroshkin (private communication). This surprising coincidence of counterexamples is precisely what made us discover Theorem \ref{th:calchom}.\\

Let us give several examples of how  Theorem \ref{th:calchom} can be applied. 

\begin{example}
Suppose that the operad $\calO$ only has unary operations, meaning that it is simply an augmented associative algebra. In this case, we have no slots for the right $\calO$-action on $\partial(\calO)$, so that action factors through the augmentation, and $\partial(\calO)_0\cong\partial(\calO)$, so we recover the result of Example \ref{ex:wheeledcomplalgbar}.
\end{example}

We proceed with a result which is essentially the wheeled Koszul property of $\Com$ considered as a Koszul operad, first proved in \cite{MR2483835}.

\begin{corollary}\label{cor:barcom}
We have 
 \[
H_\bullet(\mathsf{B}^{\circlearrowleft}(\Com)_\o)\cong\Com^{\ac}, \quad H_\bullet(\mathsf{B}^{\circlearrowleft}(\Com)_\w)\cong\Cyc(s\mathbbold{1}). 
 \]
\end{corollary}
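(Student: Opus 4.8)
The plan is to apply Theorem~\ref{th:calchom} directly to the operad $\Com$, so the main task is to verify its hypothesis and then identify the two resulting invariants explicitly. First I would check that $\partial(\Com)$ is free as a right $\Com$-module. Recall that $\partial(\Com)(I)=\Com(I\sqcup\{\star\})$, and since $\Com(J)=\k$ for every nonempty $J$, the derivative $\partial(\Com)$ is the one-dimensional species in each arity (including arity zero, coming from the corolla with only the $\star$-leaf). The universal multiplicative enveloping algebra of a commutative algebra $A$ is $A$ itself acting by multiplication, which operadically reflects the fact that $\partial(\Com)\cong\uCom$ as a right $\Com$-module, visibly free on a single generator sitting in the top arity; this is exactly the freeness condition needed, and it is the same computation that underlies the classical PBW property for commutative algebras.

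Granting freeness, Theorem~\ref{th:calchom} gives $H_\bullet(\mathsf{B}^{\circlearrowleft}(\Com)_\o)\cong H_\bullet(\mathsf{B}(\Com))$ and $H_\bullet(\mathsf{B}^{\circlearrowleft}(\Com)_\w)\cong HC_{\bullet-1}(\partial(\overline{\Com})_0)$, so the two claims reduce to (i) identifying the operadic bar homology of $\Com$ and (ii) computing the cyclic homology of the twisted associative algebra $\partial(\overline{\Com})_0$. For (i), I would invoke the classical Koszulness of $\Com$, whose Koszul dual cooperad is $\Com^{\ac}$ (the cocommutative cooperad, up to the standard operadic suspension/Koszul-dual conventions): Koszulness means precisely that $H_\bullet(\mathsf{B}(\Com))$ is concentrated on the diagonal and equals $\Com^{\ac}$, giving the first displayed isomorphism.

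For (ii), the key observation is that $\partial(\overline{\Com})_0=\partial(\overline{\Com})\circ_{\Com}\mathbbold{1}$ is the space of right-module indecomposables of $\partial(\overline{\Com})$, which by the freeness isomorphism $\partial(\overline{\Com})\cong\partial(\overline{\Com})_0\circ\Com$ is the single generator, i.e.\ the one-dimensional twisted associative algebra $s\mathbbold{1}$ (the unit species placed in the appropriate arity). As a twisted associative algebra concentrated in a single ``letter,'' its cyclic homology is computed by the cyclic words $\Cyc$ on it: since the underlying algebra is the trivial one-generated algebra, its cyclic complex has no higher differentials and $HC_{\bullet-1}$ collapses to $\Cyc(s\mathbbold{1})$. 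This matches the shape already seen in Example~\ref{ex:wheeledcomplalgbar}, where the wheel part always produces a $\Cyc$ of the generators.

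The step I expect to be the main obstacle is pinning down (ii) precisely rather than (i): one must be careful that $\partial(\overline{\Com})_0$ is genuinely the \emph{reduced} indecomposables and that its twisted-associative-algebra structure is the trivial (square-zero) one, so that the cyclic complex degenerates and yields exactly $\Cyc(s\mathbbold{1})$ with the correct degree shift. The bookkeeping of the suspension $s$ and the arity grading — making sure the answer lands as $\Cyc(s\mathbbold{1})$ and not some reindexed variant — is where sign and degree conventions from Section~\ref{sec:wheeledbar} must be tracked carefully; the freeness check in (i) and the Koszulness input are comparatively routine once the conventions are fixed.
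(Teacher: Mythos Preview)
Your approach is essentially the same as the paper's: apply Theorem~\ref{th:calchom} after checking freeness via $\partial(\Com)\cong\uCom$, invoke Koszulness of $\Com$ for the operadic part, and compute the cyclic homology of $\partial(\overline{\Com})_0$ for the wheeled part. One slip to fix: $\partial(\overline{\Com})_0$ is $\mathbbold{1}$, not $s\mathbbold{1}$ --- there is no suspension on the algebra itself (the $s$ in $\Cyc(s\mathbbold{1})$ comes from the definition of the cyclic complex $s^{-1}\Cyc(sA)$, and the further shift in $HC_{\bullet-1}$ cancels the $s^{-1}$); with your identification the cyclic complex would land as $\Cyc(s^2\mathbbold{1})$, which is off by a degree.
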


\begin{proof}
Indeed, we have the twisted associative algebra isomorphism $\partial(\Com)\cong\uCom$, so $\partial(\overline{\Com})\cong\Com$ and $\partial(\overline{\Com})_0\cong\mathbbold{1}$ with zero multiplication, so its cyclic homology as a twisted associative algebra is the cyclic complex $s^{-1}\Cyc(s\mathbbold{1})$. 
\end{proof}

The following result is essentially a new proof of \cite[Th.~6.1.1]{MR2483835}, which shows that the operad $\Ass$ is not Koszul as a wheeled operad, but the homology of its bar construction is ``almost'' concentrated on the diagonal.
\begin{corollary}\label{cor:barass}
We have 
\begin{gather*}
H_\bullet(\mathsf{B}^{\circlearrowleft}(\Ass)_\o)\cong\Ass^{\ac},\\
H_\bullet(\mathsf{B}^{\circlearrowleft}(\Ass)_\w)\cong\Cyc(s\mathbbold{1})\oplus \Cyc(s\mathbbold{1})\oplus(\k s^{-1}\oplus \k)\otimes\Cyc(s\mathbbold{1})\otimes\Cyc(s\mathbbold{1}). 
\end{gather*}
\end{corollary}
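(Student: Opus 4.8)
The plan is to deduce both isomorphisms from Theorem~\ref{th:calchom}, so the first task is to verify its hypothesis: that $\partial(\Ass)$ is free as a right $\Ass$-module. This is exactly the PBW property of the multiplicative universal envelope of associative algebras, and it can be read off the identification $\partial(\Ass)\cong\uAss\otimes\uAss$ recorded earlier: displaying the two factors as left and right multiplications exhibits $\partial(\Ass)$ as the free right $\Ass$-module on its minimal generators $\partial(\Ass)_0$. Granting this, the first isomorphism $H_\bullet(\mathsf{B}^{\circlearrowleft}(\Ass)_\o)\cong\Ass^{\ac}$ is immediate: by Theorem~\ref{th:calchom} the operadic part computes $H_\bullet(\mathsf{B}(\Ass))$, and $\Ass$ is a Koszul operad whose bar homology is its Koszul dual cooperad $\Ass^{\ac}$.

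The content of the corollary is therefore the wheeled part, which by Theorem~\ref{th:calchom} equals $HC_{\bullet-1}(\partial(\overline{\Ass})_0)$, so the second step is to identify the twisted associative algebra $\partial(\overline{\Ass})_0$ explicitly. Recall that $\partial(\calO)_0$ is the Schur functor of the universal multiplicative envelope of the trivial $\calO$-algebra. For $\Ass$, the trivial algebra on $V$ has $V\cdot V=0$, so this envelope is spanned by left and right multiplications $l_v,r_w$ with $l_vl_{v'}=r_wr_{w'}=0$, $l_vr_w=r_wl_v$, and all triple products zero. In species terms this says $\partial(\Ass)_0\cong\calD\otimes\calD$, the Cauchy square of the twisted dual numbers $\calD=\mathbf{1}\oplus\k\epsilon$ (one generator $\epsilon$ placed in arity one, $\epsilon^2=0$, Schur functor $V\mapsto\k\oplus V$), the two tensor factors recording left and right multiplications. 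Passing to augmentation ideals, $\partial(\overline{\Ass})_0\cong\overline{\calD\otimes\calD}$, which is spanned by beads of three types $l$, $r$, $lr$, with the only nonzero products $l\cdot r=r\cdot l=lr$.

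It remains to compute $HC_{\bullet-1}(\overline{\calD\otimes\calD})$. As a building block, note that $\overline{\calD}$ is square-zero, so its cyclic complex carries the zero differential and $HC_{\bullet-1}(\calD)\cong\Cyc(s\mathbbold{1})$, exactly as in the $\Com$ computation of Corollary~\ref{cor:barcom}. I would then analyse the cyclic complex $\Cyc(s\overline{\calD\otimes\calD})$ of necklaces in the beads $l,r,lr$, whose differential merges an adjacent $l,r$ pair (in either order) into an $lr$ bead and annihilates every other adjacency, an $lr$ bead never being mergeable further. The differential preserves the weight $m+n+2p$ and the charge $m-n$ (with $p,m,n$ the numbers of $lr$-, $l$- and $r$-beads), so the complex splits into finite pieces. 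The necklaces consisting only of $l$'s, and those consisting only of $r$'s, form subcomplexes with zero differential onto which nothing maps, contributing the two copies of $\Cyc(s\mathbbold{1})$; the remaining, genuinely mixed, necklaces form a complementary subcomplex whose homology is the cross term $(\k s^{-1}\oplus\k)\otimes\Cyc(s\mathbbold{1})\otimes\Cyc(s\mathbbold{1})$. Alternatively one can organise the same computation through the Künneth theorem for the cyclic homology of a tensor product of augmented algebras in characteristic zero, feeding in $HC_{\bullet-1}(\calD)\cong\Cyc(s\mathbbold{1})$ and reading the $\k s^{-1}\oplus\k$ off the $\mathrm{Tor}$-type cross term.

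The main obstacle is precisely this cross term: separating the pure-left and pure-right contributions is immediate, but pinning down the mixed part requires controlling the interaction of the cyclic symmetrisation with the Koszul signs carried by the degree-one beads, and in particular producing the exact graded multiplicity $\k s^{-1}\oplus\k$, which reflects that the mixed complex attached to each pair of a left and a right cyclic word has homology concentrated in two adjacent degrees. This sign-and-symmetry bookkeeping, rather than any conceptual difficulty, is where the real work lies; everything else is a direct application of Theorem~\ref{th:calchom} together with the Koszulness of $\Ass$.
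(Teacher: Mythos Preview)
Your approach is essentially the paper's: apply Theorem~\ref{th:calchom}, identify $\partial(\Ass)_0$ with the Cauchy square of the twisted dual numbers (the paper writes this as $(\partial(\Com)_0)^{\otimes 2}$, which is the same thing), and then compute the cyclic homology of that tensor square. The only difference is in the last step. What you flag as the ``alternative'' route---the K\"unneth formula for cyclic homology of a tensor product---is exactly what the paper does: it invokes the analogue for twisted associative algebras of \cite[Prop.~4.4.8]{MR1600246}, feeding in $HC_\bullet(\partial(\Com)_0)\cong\k[u]\oplus s^{-1}\Cyc(s\mathbbold{1})$ and $HH_\bullet(\partial(\Com)_0)\cong\k\oplus(\k\oplus\k s)\otimes s^{-1}\Cyc(s\mathbbold{1})$ (the latter read off the reduced Hochschild complex of the dual numbers), which produces the cross term $(\k s^{-1}\oplus\k)\otimes\Cyc(s\mathbbold{1})\otimes\Cyc(s\mathbbold{1})$ without any bead-by-bead bookkeeping. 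Your primary route via necklaces in $l,r,lr$ beads would also work, but the Loday formula sidesteps precisely the sign-and-symmetry obstacle you identify, so you may as well lead with it.
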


\begin{proof}
Indeed, we know that $\partial(\Ass)\cong\uAss\otimes\uAss$, and from that it is easy to see that we have 
 \[
\partial(\Ass)_0=(\partial(\Com)_0)^{\otimes2}.
 \] 
Now, we have, in the standard cyclic homology notations, 
 \[
HC_{\bullet}(\partial(\Com)_0)\cong\k[u]\oplus s^{-1}\Cyc(s\mathbbold{1}),  
 \]
so by an analogue of \cite[Prop.~4.4.8]{MR1600246} for twisted associative algebras, we have 
\begin{multline*}
HC_{\bullet}(\partial(\Com)_0^{\otimes2})\cong HC_{\bullet}(\partial(\Com)_0)\oplus HH_{\bullet}(\partial(\Com)_0)\otimes s^{-1}\Cyc(s\mathbbold{1})
\cong\\ \left(HC_{\bullet}(\k)\oplus s^{-1}\Cyc(s\mathbbold{1})\right)\oplus \left(\k\oplus(\k\oplus \k s)\otimes s^{-1}\Cyc(s\mathbbold{1})\right)\otimes s^{-1}\Cyc(s\mathbbold{1}),
\end{multline*}
where the last equality is immediate if one uses the reduced Hochschild complex to compute the homology, so
 \[
HC_{\bullet-1}(\partial(\overline{\Ass})_0)\cong \Cyc(s\mathbbold{1})\oplus \Cyc(s\mathbbold{1})\oplus (\k s^{-1}\oplus \k)\otimes(\Cyc(s\mathbbold{1})\otimes \Cyc(s\mathbbold{1})).  
 \]
\end{proof}

The following result is, to the best of our knowledge, new. It is the first natural step towards computing the minimal model of $\Lie$ as a wheeled operad; we shall address this question elsewhere.

\begin{corollary}\label{cor:barlie}
We have 
 \[
H_\bullet(\mathsf{B}^{\circlearrowleft}(\Lie)_\o)\cong\Lie^{\ac},\quad H_\bullet(\mathsf{B}^{\circlearrowleft}(\Lie)_\w)\cong\mathrm{Hook}, 
 \]
where $\mathrm{Hook}$ is the linear species with the homological degree $d$ part of $\mathrm{Hook}(n)$ being the $S_n$-module corresponding to the Young diagram $(n-d+1,1^{d-1})$, $d\ge 1$. 
\end{corollary}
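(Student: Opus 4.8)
The plan is to apply Theorem~\ref{th:calchom}, whose single hypothesis is that $\partial(\Lie)$ be free as a right $\Lie$-module. This is precisely the functorial PBW property for multiplicative universal envelopes of $\Lie$-algebras, which holds because the universal enveloping algebra of a free Lie algebra is the free associative algebra; concretely one has the isomorphism of twisted associative algebras $\partial(\Lie)\cong\uAss$ recalled in Example~\ref{ex:ULie}, and the corresponding freeness is the content of~\cite{MR4381941}. Granting this, Theorem~\ref{th:calchom} immediately yields $H_\bullet(\mathsf{B}^{\circlearrowleft}(\Lie)_\o)\cong H_\bullet(\mathsf{B}(\Lie))$, which equals $\Lie^{\ac}$ since $\Lie$ is a Koszul operad; this disposes of the first isomorphism. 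The same theorem reduces the second isomorphism to the computation of $HC_{\bullet-1}(\partial(\overline{\Lie})_0)$.

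The next step is to identify the twisted associative algebra $\partial(\overline{\Lie})_0$. By definition $\partial(\Lie)_0=\partial(\Lie)\circ_{\Lie}\mathbbold{1}$ is the Schur functor sending $V$ to the universal multiplicative envelope of the trivial (abelian) $\Lie$-algebra on $V$; since the enveloping algebra of an abelian Lie algebra is its symmetric algebra, the concatenation product of $\partial(\Lie)\cong\uAss$ descends to the symmetric product, giving $\partial(\Lie)_0\cong\uCom$ as twisted associative algebras. Under this identification the unit summand $\partial(\mathbbold{1})$ corresponds to the degree-zero part $S^0=\mathbbold{1}$, so the reduced indecomposables are $\partial(\overline{\Lie})_0\cong\Com$, i.e. the twisted commutative algebra $V\mapsto\overline{S}(V)=\bigoplus_{m\ge1}S^m(V)$ with the symmetric product. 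Everything thus comes down to computing the cyclic homology of the reduced symmetric algebra, read functorially in $V$ as the cyclic homology of a twisted associative algebra allows.

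For this I would invoke the Hochschild--Kostant--Rosenberg theorem together with the $\lambda$-decomposition of cyclic homology of smooth commutative algebras~\cite{MR1600246}, namely $HC_n\cong\Omega^n/d\Omega^{n-1}\oplus\bigoplus_{i\ge1}H^{n-2i}_{\mathrm{dR}}$. The symmetric algebra is smooth, and the Poincar\'e lemma for affine space makes its de Rham cohomology $\k$ in degree zero and trivial otherwise; since that single class sits in arity zero, the positive-arity part of $HC_n(S(V))$ — which is $HC_n(\overline{S}(V))$ — reduces to the truncated de Rham term $\Omega^n/d\Omega^{n-1}$. In arity $m$ one has $\Omega^n_m\cong S^{m-n}(V)\otimes\Lambda^n(V)$, and the Pieri rule gives the two-hook decomposition
 \[
S^{m-n}(V)\otimes\Lambda^n(V)\cong\mathbb{S}_{(m-n+1,1^{n-1})}(V)\oplus\mathbb{S}_{(m-n,1^n)}(V),
 \]
where $\mathbb{S}_\lambda$ denotes the Schur functor (equivalently the $S_m$-irreducible) attached to the Young diagram $\lambda$. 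Consecutive terms $\Omega^{n-1}_m$ and $\Omega^n_m$ share exactly one hook, so exactness of the de Rham complex in positive arity forces each shared hook to cancel by an isomorphism, and the cokernel of $d\colon\Omega^{n-1}_m\to\Omega^n_m$ is the single remaining hook $\mathbb{S}_{(m-n,1^n)}(V)$. Hence $HC_n(\overline{S}(V))$ in arity $m$ is $\mathbb{S}_{(m-n,1^n)}(V)$, and after the shift $HC_{\bullet-1}$ the degree-$d$ part in arity $m$ is $\mathbb{S}_{(m-d+1,1^{d-1})}(V)$, which is exactly $\mathrm{Hook}$.

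The main obstacle is the last step: one must run the cyclic homology computation \emph{functorially} in $V$ (rather than for a fixed polynomial ring), keep careful track of the reduced-versus-unital normalization so that the arity-zero de Rham contributions are correctly discarded, and verify that the Pieri bookkeeping together with exactness of the de Rham complex really collapses the truncated de Rham term to a \emph{single} hook in each homological degree instead of an alternating sum of several. Everything else is a direct application of Theorem~\ref{th:calchom} and of the identification of $\partial(\overline{\Lie})_0$ with the reduced symmetric algebra.
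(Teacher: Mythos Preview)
Your proposal is correct and follows essentially the same route as the paper: apply Theorem~\ref{th:calchom} using the PBW isomorphism $\partial(\Lie)\cong\uCom\circ\Lie$, identify $\partial(\overline{\Lie})_0$ with the reduced symmetric algebra, compute its cyclic homology via the truncated de Rham complex, and use the Pieri rule to extract the single hook $(m-d+1,1^{d-1})$ in each bidegree. The only cosmetic difference is that the paper writes the answer directly as $\Com(\mathbbold{1}\oplus s\mathbbold{1})/\mathrm{Im}(d_{dR})$ rather than invoking HKR and the $\lambda$-decomposition, but this is the same computation.
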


\begin{proof}
As indicated in Example \ref{ex:ULie}, we have an isomorphism of twisted associative algebras $\partial(\Lie)\cong\uAss$; moreover, there is a PBW isomorphism of right $\Lie$-modules $\partial(\Lie)\cong\uCom\circ\Lie$. Consequently, we have an isomorphism of twisted associative algebras $\partial(\Lie)_0\cong\uCom$, and computing the cyclic homology is essentially computing the cyclic homology of polynomial algebras. The answer (for the reduced homology) is given by $\Com(\mathbbold{1}\oplus s\mathbbold{1})/\mathrm{Im}(d_{dR})$, where $d_{dR}$ is the unique derivation extending $s\colon \mathbbold{1}\to s\mathbbold{1}$. Note that we can explicitly compute the Cauchy product 
 \[
S^p(\mathbbold{1})\otimes S^{q}(s\mathbbold{1})\cong S^p\otimes S^{1^q}\cong S^{p,1^q}\oplus S^{p+1,1^{q-1}}
 \]
using the Pieri rule for Schur functions \cite{MR1354144}, and it is almost obvious that $d_{dR}$ sends $S^{p,1^q}\subset S^p(\mathbbold{1})\otimes S^{q}(s\mathbbold{1})$ to the same submodule of $S^{p-1}(\mathbbold{1})\otimes S^{q+1}(s\mathbbold{1})$. Thus, as a linear species, we have
 \[
HC_d^{(w)}(\partial(\overline{\Lie})_0)\cong S^{w-d,1^d}.
 \] 
Since we need $HC_{\bullet-1}$, shifting $d$ by one completes the proof. 
\end{proof}

Let us also record a new result of the same flavour concerning the operad of pre-Lie algebras.

\begin{corollary}\label{cor:barpl}
We have
\begin{gather*}
H_\bullet(\mathsf{B}^{\circlearrowleft}(\PL)_\o)\cong\PL^{\ac},\\ 
H_\bullet(\mathsf{B}^{\circlearrowleft}(\PL)_\w)\cong\mathrm{Hook}\oplus \uCom(\mathbbold{1}\oplus s\mathbbold{1})\otimes \Cyc(\mathbbold{1}). 
\end{gather*}
 \end{corollary}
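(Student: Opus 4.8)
The plan is to run $\calO=\PL$ through Theorem~\ref{th:calchom}, exactly as in the proof of Corollary~\ref{cor:barlie}. Two hypotheses must be checked. First, that $\PL$ is Koszul: this is the theorem of Chapoton and Livernet, and it identifies $H_\bullet(\mathsf{B}(\PL))$ with the suspended Koszul dual cooperad, giving the operadic part $H_\bullet(\mathsf{B}^{\circlearrowleft}(\PL)_\o)\cong\PL^{\ac}$. Second, that $\partial(\PL)$ is free as a right $\PL$-module, i.e.\ that $\PL$ has the PBW property for multiplicative enveloping algebras in the sense of \cite{MR4381941}; I would verify this by exhibiting the rooted-tree PBW basis, equivalently by the dimension bookkeeping of $\dim\PL(n+1)=(n+1)^n$ against $\dim(\partial(\PL)_0\circ\PL)(n)$, which already forces freeness in low arities. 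Granting both, Theorem~\ref{th:calchom} reduces the wheeled part to the cyclic homology $HC_{\bullet-1}(\partial(\overline{\PL})_0)$.

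The core of the argument is to identify the twisted associative algebra $\partial(\PL)_0$ and compute its cyclic homology. Since $\partial(\PL)_0$ is the Schur functor of the universal multiplicative envelope of the trivial pre-Lie algebra, I would compute that envelope from the right-symmetry relation: writing $\lambda_v$ and $\rho_v$ for the operators of left and right multiplication, one finds $\rho_u\rho_v=\rho_v\rho_u$ together with the cross relation $[\lambda_u,\rho_v]=\lambda_u\lambda_v$, the $\lambda_v$ remaining otherwise free. A normal-form argument (pushing all $\rho$'s to the right) then gives a vector-space (species) isomorphism $\partial(\PL)_0\cong\uCom\otimes\uAss$, the commuting $\rho$'s contributing the symmetric factor $\uCom$ and the free $\lambda$'s the tensor factor $\uAss$; this matches the multilinear dimensions $1,2,5,16,\dots$. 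Filtering by the number of $\rho$-letters turns the cross relation into a genuine commutation relation in the associated graded, so that $\operatorname{gr}\partial(\PL)_0\cong\uCom\otimes\uAss$ as a twisted associative algebra.

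I would then compute cyclic homology via the Künneth-type formula for the cyclic homology of a Cauchy product of twisted associative algebras already invoked in Corollary~\ref{cor:barass}. Because the factor $\uAss$ is free, hence smooth, with cyclic homology concentrated in degree zero and equal to the cyclic words $\Cyc(\mathbbold{1})$, that formula collapses to a direct sum of $HC_\bullet(\uCom)$ and a single cross term $HH_\bullet(\uCom)\otimes\Cyc(\mathbbold{1})$. The remaining inputs are standard and available from the earlier corollaries: the reduced cyclic homology of the polynomial twisted algebra is the hook species, computed by the de Rham/HKR description exactly as in the proof of Corollary~\ref{cor:barlie}, and its Hochschild homology is $HH_\bullet(\uCom)\cong\uCom(\mathbbold{1}\oplus s\mathbbold{1})$. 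Substituting and shifting by one to pass to $HC_{\bullet-1}$ yields precisely $\mathrm{Hook}\oplus\uCom(\mathbbold{1}\oplus s\mathbbold{1})\otimes\Cyc(\mathbbold{1})$.

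The main obstacle I expect is the passage from $\operatorname{gr}\partial(\PL)_0$ back to $\partial(\PL)_0$ itself: one must show that the spectral sequence of the $\rho$-filtration on the cyclic complex degenerates, so that the cyclic homology of the genuinely non-split algebra agrees with that of the tensor product. I anticipate this follows from an internal-weight argument---the deformation term strictly lowers the $\rho$-filtration while preserving arity weight, and the $E_1$-page (given by the hook and cross-term species above) is too sparse in the relevant bidegrees to carry higher differentials---but turning this into a clean vanishing statement, with the symmetric-group representations tracked via the Pieri rule as in Corollary~\ref{cor:barlie}, is the delicate point. The two more routine tasks are recording the correct twisted analogue of the cyclic Künneth formula and confirming the freeness of $\partial(\PL)$ over $\PL$.
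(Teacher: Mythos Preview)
Your strategy matches the paper's exactly: apply Theorem~\ref{th:calchom} and compute $HC_{\bullet-1}(\partial(\overline{\PL})_0)$ via the K\"unneth-type formula \cite[Prop.~4.4.8]{MR1600246}, using the same inputs $HC_\bullet(\uCom)$, $HH_\bullet(\uCom)\cong\uCom(\mathbbold{1}\oplus s\mathbbold{1})$, and $\overline{HC}_0(\uAss)\cong\Cyc(\mathbbold{1})$ as in Corollaries~\ref{cor:barass} and~\ref{cor:barlie}.

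The one place where you diverge from the paper is in identifying $\partial(\PL)_0$. You attempt to compute the universal multiplicative envelope of the trivial pre-Lie algebra by hand from the right-symmetry relation, obtain a cross relation that makes the algebra look like a filtered deformation of $\uCom\otimes\uAss$, and then worry about degeneration of the resulting spectral sequence on the cyclic complex. The paper avoids this entirely: it cites \cite{MR4381941,MR2078718} for the fact that $\partial(\PL)$ is free as a right $\PL$-module with generating species $\uCom\otimes\uAss$, and that the induced twisted associative algebra structure on $\partial(\PL)_0$ is genuinely the Cauchy tensor product $\uCom\otimes\uAss$, not merely a deformation of it. With that in hand there is no filtration and no spectral sequence to degenerate; one computes $HC_\bullet(\uCom\otimes\uAss)$ directly.

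So your ``main obstacle'' is an artefact of rederiving a known structural result rather than citing it. If you do want to proceed by hand, you should revisit your cross relation (its precise form depends on conventions, and the claim $[\lambda_u,\rho_v]=\lambda_u\lambda_v$ should be checked carefully); but the cleanest fix is simply to invoke the references for the honest algebra isomorphism $\partial(\PL)_0\cong\uCom\otimes\uAss$, after which your computation and the paper's coincide.
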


\begin{proof}
It is known \cite{MR4381941,MR2078718} that $\partial(\PL)$ is a free right $\PL$-module with the species of generators $\uCom\otimes\uAss$, and 
 \[
HC_\bullet(\uAss)=\k[u]\otimes\k\oplus\Cyc(\mathbbold{1}),
 \]
so by an analogue of \cite[Prop.~4.4.8]{MR1600246}, we have 
\begin{multline*}
HC_{\bullet}(\uCom\otimes\uAss)\cong HC_{\bullet}(\uCom)\otimes\k\oplus HH_{\bullet}(\uCom)\otimes \Cyc(\mathbbold{1})
\cong\\ HC_{\bullet}(\uCom)\oplus \uCom(\mathbbold{1}\oplus s\mathbbold{1})\otimes \Cyc(\mathbbold{1}),
\end{multline*} 
so 
 \[
HC_{\bullet-1}(\partial(\overline{\PL})_0)\cong \mathrm{Hook}\oplus \uCom(\mathbbold{1}\oplus s\mathbbold{1})\otimes \Cyc(\mathbbold{1}),
 \]
as required. 
\end{proof}

\subsection{Applications to the homology of \texorpdfstring{$\Der^+(\calO(V))$}{Der+O} with constant coefficients} In this section, we shall make use of Theorem \ref{th:main1} to obtain some concrete information about the homology of the Lie algebra $\Der^+(\calO(V))$ with constant coefficients. That is particularly important for estimates of homology in degrees $1$ and $2$, for the following reason. If the operad~$\calO$ has an extra weight grading for which the augmentation ideal consists of elements of strictly positive weight (this happens, for example, if $\calO$ has no unary operations other than multiples of the identity), one can use that grading on the algebra $\Der^+(\calO(V))$ to give the homology an additional weight grading, and use that grading to identify the first homology with the minimal space of generators of that Lie algebra and the second homology with the minimal space of relations between those generators \cite[Prop.~A.9]{MR0874337}.

The main idea that we use is that, for a representation $M$ of $\mathfrak{gl}(V)$ that is contained in a finite sum of tensor products of copies of $V$ and $V^*$, one can completely determine the structure of $M$ from the spaces of ``matrix elements'' 
 \[
(M\otimes (V^*)^{\otimes p}\otimes V^{\otimes q})^{\mathfrak{gl}(V)}
 \]
for various $p,q$. In particular, as we saw above, each weight graded component $H_k^{(w)}(\Der^+(\calO(V)),\k)$ has a natural $\mathfrak{gl}(V)$-module structure satisfying this condition. Thus, the $\mathfrak{gl}(V)$-module structure is fully captured by the space of the matrix elements
 \[ 
(H_\bullet(\Der^+(\calO(V)),\k)\otimes\Hom(V^{\otimes p},V^{\otimes q}))^{\mathfrak{gl}(V)}
 \] 
for various values of $p$ and $q$. To give an illustration of how this works, we shall now show how to use Theorem \ref{th:main1} to recover the Fuchs stability theorem for the Lie algebra $L_1(n)$ of polynomial vector fields that vanish twice at the origin. (We have already seen that Theorem \ref{th:main1} easily implies the Loday--Quillen--Tsygan theorem.)

\begin{corollary}[Fuchs \cite{MR0725416}]\label{cor:newFuchs}
Suppose that $n>r+2d$. Then 
 \[
H^{(r)}_d(L_1(n),\k)=0 \text{ for } r\ne d.
 \]
\end{corollary}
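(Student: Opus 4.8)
The plan is to recognise $L_1(n)$ operadically and then feed it into the machinery assembled above. The polynomial algebra with no constant term is the free algebra $\Com(V)$ over the non-unital commutative operad $\Com$ with $V=\k^n$, and a derivation lies in $\Der^+(\Com(V))$ precisely when the image of each generator has vanishing linear part, i.e. is a polynomial vanishing to order two at the origin. Thus $L_1(n)\cong\Der^+(\Com(V))$, and the weight grading on $\Der^+(\Com(V))$ (a derivation of weight $w$ raises polynomial degree by $w$) matches the grading on $L_1(n)$, so that $H^{(r)}_d(L_1(n),\k)\cong H^{(r)}_d(\Der^+(\Com(V)),\k)$. I therefore set $M:=H^{(r)}_d(\Der^+(\Com(V)),\k)$ and aim to show $M=0$ for $r\ne d$ in the stated range.

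The crucial structural input, read off from the proof of Theorem~\ref{th:graphcx1} with $p=q=0$, is that $M$ is a $\mathfrak{gl}(V)$-module contained in several copies of $V^{\otimes(r+d)}\otimes(V^*)^{\otimes d}$ (here $N=w+d=r+d$ and $M'=d$). Since we work in characteristic zero, rational representations of $\mathfrak{gl}(V)$ are completely reducible, so $M$ is a summand and $M^*$ is a summand of $(V^*)^{\otimes(r+d)}\otimes V^{\otimes d}$. Consequently $M=0$ as soon as its top matrix element $(M\otimes(V^*)^{\otimes(r+d)}\otimes V^{\otimes d})^{\mathfrak{gl}(V)}$ vanishes: were $M$ nonzero, its identity endomorphism would produce a nonzero invariant there. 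Thus everything reduces to computing this single matrix element.

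Next I would compute that matrix element stably. By Proposition~\ref{prop:thmain1} it equals $H^{(r)}_d(\Der^+(\Com(V)),\Hom(V^{\otimes(r+d)},V^{\otimes d}))^{\mathfrak{gl}(V)}$, and by Theorem~\ref{th:main1} its stable value is the corresponding $(r,d;\,r+d,d)$ component of the coPROP completion of the wheeled cooperad $H_\bullet(\mathsf{B}^{\circlearrowleft}(\Com))$. Corollary~\ref{cor:barcom} gives $H_\bullet(\mathsf{B}^{\circlearrowleft}(\Com))_\o\cong\Com^{\ac}$ and $H_\bullet(\mathsf{B}^{\circlearrowleft}(\Com))_\w\cong\Cyc(s\mathbbold{1})$. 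The first is concentrated on the diagonal weight $=$ homological degree because $\Com$ is Koszul; the second is likewise diagonal, since a cyclic word of length $d$ carries homological degree $d$ and weight $d$ (each letter, coming from the single weight-one generator of $\partial(\overline{\Com})_0\cong\mathbbold{1}$, contributes weight one). Because the coPROP completion is built from Cauchy tensor products of $\Com^{\ac}$ and symmetric powers of $\Cyc(s\mathbbold{1})$, under which both weight and homological degree add, it too is concentrated on the diagonal. Hence for $r\ne d$ the stable matrix element vanishes, forcing $M=0$.

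Finally I would pin down the stable range, which is where the only real care is needed. Theorem~\ref{th:graphcx1} identifies the invariant Chevalley–Eilenberg complex with the wheeled bar construction in homological degrees $d$ and $d+1$ only when $\dim(V)>d+p$; for the matrix element used above $p=r+d$, yielding exactly $n>d+(r+d)=r+2d$. Within this range the identification of complexes, together with the fact that homology commutes with taking $\mathfrak{gl}(V)$-invariants, shows the matrix element has already reached its diagonal-concentrated stable value and so vanishes for $r\ne d$. The main obstacle is precisely this bookkeeping: one must match the homological degree in which stability is required (degree $d+1$, needed to read off $H_d$) against the number of auxiliary contravariant/covariant slots ($p=r+d$ and $q=d$) required to detect the whole of $M$, and verify that both constraints are subsumed by the single inequality $n>r+2d$.
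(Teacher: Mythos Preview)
Your proposal is correct and follows essentially the same route as the paper: identify $L_1(n)$ with $\Der^+(\Com(V))$, reduce vanishing of $H^{(r)}_d$ to vanishing of the single matrix element with $p=r+d$, $q=d$ (using that $M$ sits inside copies of $V^{\otimes(r+d)}\otimes(V^*)^{\otimes d}$), compute that matrix element via Theorem~\ref{th:main1} and Corollary~\ref{cor:barcom} as a diagonal-concentrated object, and read off the stable range $n>r+2d$. Your explicit appeal to Proposition~\ref{prop:thmain1} to pass from $(H_\bullet(\cdot,\k)\otimes\Hom)^{\mathfrak{gl}(V)}$ to $H_\bullet(\cdot,\Hom)^{\mathfrak{gl}(V)}$ is a point the paper's proof leaves implicit, so in that respect your write-up is slightly cleaner.
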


\begin{proof}
First, let us note that we manifestly have $L_1(n)=\Der^+(\Com(V))$ for $V=\k^n$. Taking matrix elements of representations immediately implies that the vector space $H^{(r)}_d(\Der^+(\Com(V)),\k)$ vanishes if and only if
 \[
\left(H^{(r)}_d(\Der^+(\Com(V)),\k)\otimes\Hom(V^{\otimes p},V^{\otimes q}))\right)^{\mathfrak{gl}(V)}=0 \text{ for all } p,q. 
 \] 
This vector space is, according to Theorem \ref{th:main1}, stably isomorphic to the $(p,q)$th component to the coPROP completion of $H_\bullet(\mathsf{B}^{\circlearrowleft}(\Com))$.
There are two steps that remain. First, we saw in the proof of Theorem \ref{th:main1} that $H^{(r)}_d(\Der^+(\Com(V)),\k)$ is contained in the direct sum of several copies of 
$\Hom(V^{\otimes d},V^{\otimes (r+d)})$, so it is enough to show that 
\[
\left(H^{(r)}_d(\Der^+(\Com(V)),\k)\otimes \Hom(V^{\otimes (r+d)},V^{\otimes d})\right)^{\mathfrak{gl}(V)}=0.
 \]
For these parameters, the stable range condition is given exactly by $n>r+2d$. Second, in the case of the operad $\Com$ a very particular phenomenon presents itself: if we consider it as a wheeled operad, it has just one wheeled relation stating that the universal trace of the operator of multiplication by $a$ from $\partial(\Com)$ vanishes, and this wheeled operad is Koszul (see \cite{MR2483835} or Corollary \ref{cor:barcom}), so the homology $\mathsf{B}^{\circlearrowleft}(\Com)$, as well as the coPROP completion of that homology, is concentrated on the diagonal $r=d$. 
\end{proof}

As we mentioned above, in \cite{MR0725416} the chain complexes of Theorem \ref{th:graphcx1} in the case $\calO=\Com$ are considered, and it is shown that the dimensions of homology of subcomplexes of connected graphs are given by factorials, see \cite[Prop.~1.5]{MR0725416} in the case of a tree and \cite[Prop.~1.6]{MR0725416} in the case of a wheel. This is in perfect agreement with the fact that, according to \cite[Ex.~5.2.5]{MR2483835}, the Koszul dual wheeled operad of $\Com$ is $\Lie^{\circlearrowleft}=\Lie\oplus\Cyc(\mathbbold{1})$; moreover, this latter isomorphism clarifies the difference between the two linear species involved (whose components happen to have the same dimensions).
It is also worth noting that the Koszul property of $\Com$ as a wheeled operad, which is the key ingredient of the proof of Corollary~\ref{cor:newFuchs}, is a very rare coincidence. Indeed, in the case of $\Com$, vanishing of all traces follows from vanishing of traces of all generators, a result which is certainly false for most operads. A class of operads where this statement is true includes, in particular, all operads obtained from weight graded commutative associative algebras \cite{MR3084563,khoroshkin2005}, and it would be interesting to compute the  homology of wheeled bar constructions of some operads of that class, generalizing \cite[Th.~5.2, Th.~5.3]{MR3084563}. \\

From the proof of Corollary \ref{cor:newFuchs}, one can see that we actually proved more: from vanishing of the homology of $\mathsf{B}^{\circlearrowleft}(\calO)$ in a certain range, one can deduce vanishing of $H_\bullet(\Der^+(\calO(V)),\k)$ in a certain range. In particular, the K\"unneth formula implies that to compute $H_1(\Der^+(\calO(V)),\k)$ we only need the first homology of $\mathsf{B}^{\circlearrowleft}(\calO)$, and to compute $H_2(\Der^+(\calO(V)),\k)$, we only need the first and the second homology of $\mathsf{B}^{\circlearrowleft}(\calO)$. Once that computation is done, one may try to convert the information obtained into information for a given number of generators. For example, in the case $\calO=\Com$, Feigin and Fuchs \cite{MR0757265} proved that the Lie algebra 
 \[
L_1(n)=\Der^+(\Com(V))
 \]
is generated by elements of weight $1$ for $\dim(V)>1$, and that the relations between these elements are all of weight $2$ for $\dim(V)>2$. However, their argument is highly specific to the case of the operad of associative commutative algebras. In fact, the following examples of the associative operad and of the operad of Lie algebras show that in general the situation is much more complicated, but our methods can be useful.

\begin{example}
Applying Corollary \ref{cor:barass} for $d=1$, we obtain 
 \[
H_1^{(r)}(\mathsf{B}^{\circlearrowleft}(\Ass))(n)\cong
\begin{cases}
\qquad\qquad \Ass^{\ac}(2),\qquad\qquad\,\,\,\, n=2, r=1,\\
\Cyc(s\mathbbold{1})(1)\oplus \Cyc(s\mathbbold{1})(1), \quad n=1, r=1,\\
s^{-1}(\Cyc(s\mathbbold{1})\otimes\Cyc(s\mathbbold{1}))(2), n=2, r=2,\\
\qquad\qquad\qquad0 \qquad\qquad\qquad\text{otherwise}.
\end{cases}
 \]
Here, in terms of graphs, the first line corresponds to the binary corolla, the next line corresponds to the two different ways to create a loop connecting a leaf of a binary corolla with the root, and the third line corresponds to the loop connecting the middle leaf of a ternary corolla with the root. This fully agrees with the known result \cite[Th.~1.2]{MR3047471} stating that
 \[
H_1^{(r)}(\Der^+(\Ass(V)))\cong \left(V^*\otimes V^{\otimes 2}\right)\oplus V^{\otimes 2}, 
 \]
for $r<\dim(V)$, where the first summand consists of all derivations of degree one, and the second summand is spanned by all derivations induced by the maps 
 \[
l_{v_1}r_{v_2}\in\Hom_{\k}(V,\Ass(V)) \text{ with } v_1,v_2\in V 
 \]
found in \cite[Sec.~6]{MR2508214}. Indeed, the first summand is ``detected'' by 
 \[
\Ass^{\ac}(2)\oplus\Cyc(s\mathbbold{1})(1)\oplus \Cyc(s\mathbbold{1})(1),
 \] 
and the second one by $s^{-1}(\Cyc(s\mathbbold{1})\otimes\Cyc(s\mathbbold{1}))(2)$. 
\end{example}

\begin{example}\label{ex:nonkoszul}
Applying Corollary \ref{cor:barlie} for $d=1$, we obtain
 \[
H_1^{(r)}(\mathsf{B}^{\circlearrowleft}(\Lie))(n)\cong
\begin{cases}
\,\,\Lie^{\ac}(2),\quad\! n=2, r=1,\\
\mathrm{Hook}_1(n), n\ge 1, r=n, \\ 
\quad\quad0 \quad\quad\text{ otherwise},
\end{cases}
 \]
where for each $n$ the module $\mathrm{Hook}_1(n)$ is the trivial one-dimensional $S_n$-module. This fully agrees with the known result \cite[Th.~1.4.11]{kassabov2003automorphism} stating that 
 \[
H_1^{(r)}(\Der^+(\Lie(V)))\cong \left(V^*\otimes \Lambda^2(V)\right)\oplus\bigoplus_{n\ge 2} S^n(V) 
 \]
for $r<\dim(V)(\dim(V)-1)$, where the first summand consists of all derivations of degree one, and the second summand is spanned by all derivations induced by the maps 
 \[
(\mathrm{ad}_v)^n\in\Hom_{\k}(V,\Lie(V)) \text{ with } v\in V. 
 \]
Indeed, the first summand is detected by 
 \[
\Lie^{\ac}(2)\oplus \mathrm{Hook}_1(1),
 \]
and each summand $S^n(V)$ is detected by $\mathrm{Hook}_1(n)$.
\end{example}

\subsection{Stable homology for the Lie algebras of derivations of zero divergence}

In this section, we shall prove our second main result concerning Lie algebras of derivations of zero divergence. 

Let us start with the following general observation. Suppose that $\mathfrak{g}$ is a Lie algebra, $M$ is a $\mathfrak{g}$-module. Let $\phi$ be a $1$-cocycle of $\mathfrak{g}$ with values in $M$. We shall use this data to define a dg Lie algebra 
 \[
\mathfrak{g}\ltimes_\phi M:=(\mathfrak{g}\oplus s^{-1}M,[-,-],d)
 \]
with the Lie bracket given by 
 \[
[g_1+s^{-1}m_1,g_2+s^{-1}m_2]=[g_1,g_2]+s^{-1}(g_1(m_2)-g_2(m_1))
 \] 
and the differential given by 
 \[
d(g+s^{-1}m)=s^{-1}\phi(g).
 \] 
It follows from the $1$-cocycle property that $d$ is a derivation of the bracket, so $\mathfrak{g}\ltimes_\phi M$ is indeed a dg Lie algebra.

Let us note that for an augmented operad the $\Der(\calO(V))$-module $|\partial(\calO)(V)|$ splits as $\k\oplus \overline{|\partial(\calO)(V)|}$, and it follows from Proposition \ref{prop:divcocycle} that we may view the divergence as a $1$-cocycle of $\Der^+(\calO(V))$ with values in $\overline{|\partial(\calO)(V)|}$. Therefore, we may form the dg Lie algebra $\Der^+(\calO(V))\ltimes_{\Div} \overline{|\partial(\calO)(V)|}$.

We shall now prove the following analogue of Theorem \ref{th:graphcx1}.

\begin{theorem}\label{th:graphcx2}
The $\mathfrak{gl}(V)$-invariant Chevalley--Eilenberg complexes 
 \[
C_\bullet(\Der^+(\calO(V))\ltimes_{\Div} \overline{|\partial(\calO)(V)|}, \Hom(V^{\otimes \bullet_1},V^{\otimes \bullet_2}))^{\mathfrak{gl}(V)}
 \]
are stably isomorphic to the coPROP completion of the cobar construction $\mathsf{B}^{\circlearrowleft}(\calO^{\circlearrowleft})$. This isomorphism is natural with respect to operad morphisms.
\end{theorem}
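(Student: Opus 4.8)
The plan is to adapt the proof of Theorem~\ref{th:graphcx1} by enlarging the ``LEGO pieces'' so as to account for the extra generators $s^{-1}\overline{|\partial(\calO)(V)|}$ and the differential coming from the divergence cocycle. Recall that in Theorem~\ref{th:graphcx1} the Chevalley--Eilenberg complex of $\Der^+(\calO(V))$ with bivariant coefficients, after passing to $\mathfrak{gl}(V)$-invariants in the stable range, becomes the wheeled bar construction $\mathsf{B}^{\circlearrowleft}(\calO)$, where $\calO$ is the \emph{trivial} wheeled operad with zero trace. The only structural difference now is that we have thrown in the abelian ideal $s^{-1}\overline{|\partial(\calO)(V)|}$, with the bracket $[g, s^{-1}m] = s^{-1}g(m)$ and the differential $d(g) = s^{-1}\Div(g)$. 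So the first step is to identify, as a $\mathfrak{gl}(V)$-representation in the stable range, what the new generators contribute: since $\overline{|\partial(\calO)(V)|} = \overline{|\partial(\calO)(V)|}$ is the commutator quotient of the universal multiplicative envelope, its matrix elements will produce precisely cul-de-sac vertices labelled by $\partial(\overline{\calO})$ modulo commutators, which is exactly $\calU_\w = |\partial(\calO)|$ in the wheeled completion $\calO^{\circlearrowleft}$.

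\emph{Second}, I would set up the graphical calculus exactly as before, but now with a new type of LEGO piece corresponding to the elements of $s^{-1}\overline{|\partial(\calO)(V)|}$: a ``cul-de-sac corolla'' with one distinguished incoming half-edge (playing the role of the $\star$ slot of the derivative) together with the remaining leaves, carrying a label from $\partial(\overline{\calO})$ taken modulo graded commutators. These pieces sit in homological degree one (because of the shift $s^{-1}$ combined with the degree-one corollas of the ordinary bar construction), and assembling them with the existing corollas, sources, and sinks by matching half-edges produces exactly the connected graphs of the three types allowed by $\calT^{\circlearrowleft}$: rooted trees, cul-de-sac trees, and wheels. The cul-de-sac trees are produced by grafting ordinary corollas onto a single cul-de-sac vertex, matching the term $\overline{\calU}_\w\circ\calT(s\overline{\calU}_\o)$ in the explicit description of $\calT^{\circlearrowleft}(s\overline{\calU}_\o\oplus\overline{\calU}_\w)$ from Section~\ref{sec:wheeledbar}.

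\emph{Third}, and this is where the divergence differential enters, I would verify that the Chevalley--Eilenberg differential matches the differential of $\mathsf{B}^{\circlearrowleft}(\calO^{\circlearrowleft})$ on all three parts. On the rooted-tree and wheel parts the computation is identical to Theorem~\ref{th:graphcx1}, \emph{except} that now contracting a loop is no longer zero: the wheeled completion $\calO^{\circlearrowleft}$ has the nontrivial trace map $\tr\colon\partial(\calO)\to|\partial(\calO)|$ given by the canonical projection. The crucial claim to check is that the component of the Chevalley--Eilenberg differential coming from $d(g)=s^{-1}\Div(g)$ is precisely the loop-contraction-followed-by-trace map that sends a wheel whose cycle is a single loop to the corresponding cul-de-sac vertex. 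This is exactly the content of the definition of $\Div$ as the universal trace of $\jmath(\mathrm{d}\circ\imath_D)$, composed with the projection to $|\partial(\calO)(V)|$; so the divergence cocycle is \emph{designed} to be the trace map of $\calO^{\circlearrowleft}$. The bracket $[g,s^{-1}m]=s^{-1}g(m)$ accounts, under matrix elements, for the differential of the operadic bar construction $\mathsf{B}(\overline{\calU}_\w,\calU_\o)$ with coefficients in the right module, i.e.\ the action of corollas on the cul-de-sac label by grafting and partial composition.

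\emph{The main obstacle} I expect is twofold and both parts concern bookkeeping that is easy to get wrong. First, one must check carefully that vanishing on commutators---built into the definition of the trace map of $\calO^{\circlearrowleft}$, and matching the fact that $|\partial(\calO)(V)|$ is a commutator quotient---is exactly what makes the cul-de-sac labels well defined and makes the symmetry condition (``zero projection on co-commutators'' in the cooperad language) hold; Proposition~\ref{prop:divcocycle} and the graded-commutator convention of Section~\ref{sec:wheeledop} must be invoked to align signs. Second, the sign matching for the differential, particularly the interplay between the suspension $s$ on operadic generators, the desuspension $s^{-1}$ on the abelian ideal, and the Koszul signs from reordering degree-one corollas, is delicate; as in the proof of Theorem~\ref{th:graphcx1}, the cleanest route is to dualize to the Chevalley--Eilenberg cohomological complex and the (wheeled) cobar construction, where both differentials are derivations and hence local, so that agreement need only be checked on single corollas and single loops. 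Once the chain-level isomorphism is established, passing from connected graphs to all graphs gives tensor powers on trees and symmetric powers on wheels with respect to the Cauchy product, yielding the coPROP completion of $\mathsf{B}^{\circlearrowleft}(\calO^{\circlearrowleft})$, and naturality is immediate.
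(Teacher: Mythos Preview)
Your overall strategy is correct and matches the paper's, but your description of the new LEGO piece contains two concrete errors that would make the matching with $\mathsf{B}^{\circlearrowleft}(\calO^{\circlearrowleft})$ fail.

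First, the cul-de-sac piece has \emph{no} incoming half-edge at all. An element of $\overline{|\partial(\calO)(V)|}$ is a class in $\bigoplus_m \overline{|\partial(\calO)|}(m)\otimes_{\k S_m}V^{\otimes m}$; it carries only copies of $V$, never of $V^*$. Thus the vertex has $n_v$ outgoing half-edges (each labelled by an element of $V$) and nothing to receive an edge. The $\star$ slot of $\partial(\calO)$ does not survive as a half-edge: passing to $|\partial(\calO)|$ and evaluating on $V$ has already closed it up. If you retained a distinguished incoming half-edge, you would be assembling the wrong graphs after matching with $V^*$-labels of other pieces.

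Second, the cul-de-sac pieces sit in homological degree~$0$, not~$1$. In the Chevalley--Eilenberg complex one takes $S(s\mathfrak{g})$; since the abelian ideal enters as $s^{-1}\overline{|\partial(\calO)(V)|}$, the suspension $s$ cancels the desuspension $s^{-1}$. This is exactly what matches the wheeled bar construction $\calT^{\circlearrowleft}(s\overline{\calU}_\o\oplus\overline{\calU}_\w)$, where the wheeled generators are \emph{not} suspended. The consequence is essential: reordering cul-de-sac vertices creates no sign, so that in the passage to disconnected graphs one obtains \emph{symmetric} powers of the wheeled part, precisely as required by the coPROP completion. With degree~$1$ you would get exterior powers and the isomorphism would fail.

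Once these two points are corrected, your identification of the three pieces of the differential (edge contraction from brackets in $\Der^+$, right-module action from $[g,s^{-1}m]$, and loop contraction from the divergence cocycle via the universal trace) is exactly right and is the paper's argument.
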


\begin{proof}
As in the proof of Theorem \ref{th:graphcx1}, we note that
\begin{multline*}
C_\bullet(\Der^+(\calO(V))\ltimes_{\Div} \overline{|\partial(\calO)(V)|},\Hom(V^{\otimes p},V^{\otimes q}))\cong\\
S(s\Der^+(\calO(V))\oplus\overline{|\partial(\calO)(V)|})\otimes\Hom(V^{\otimes p},V^{\otimes q})\cong\\  
\bigoplus_{\substack{k\ge 1,\\ n_1,\ldots,n_k,m_1,\ldots,m_l\ge 1\\ p_1,\ldots,p_k,q_1,\ldots,q_l\ge 0}}
\bigotimes_{i=1}^kS^{p_i}(sV^*\otimes\overline{\calO}(n_i)\otimes_{\k S_{n_i}}V^{\otimes n_i})\\
\otimes
\bigotimes_{j=1}^lS^{q_j}(\overline{|\partial(\calO)|}(m_j)\otimes_{\k S_{m_j}}V^{\otimes m_j})
\otimes (V^*)^{\otimes p}\otimes V^{\otimes q}.   
\end{multline*}
Moreover, each summand in that sum is, as a $\mathfrak{gl}(V)$-module, a submodule of several copies of $V^{\otimes N}\otimes (V^*)^{\otimes M}$ where
 \[
N=\sum_{i=1}^kn_ip_i+\sum_{j=1}^lm_jq_j+q, \quad M=\sum_{i=1}^k p_i +p.
 \] 
We also note that our chain complex is bi-graded:
 \[
C^{(r)}_d(\Der^+(\calO(V))\ltimes_{\Div} \overline{|\partial(\calO)(V)|},\Hom(V^{\otimes p},V^{\otimes q}))^{\mathfrak{gl}(V)} 
 \]
consists of elements of weight $r$ (computed out of the weight grading on the Lie algebra $\Der^+(\calO(V))$ and on $\overline{|\partial(\calO)|(V)}$) and of homological degree $d$. We have $r=\sum_{i=1}^k (n_i-1)p_i+\sum_{j=1}^l m_jq_j$ and $d=\sum_{i=1}^k p_i$, so in particular $r+d=\sum_{i=1}^k n_ip_i+\sum_{j=1}^l m_jq_j$. Thus, our above formulas can be written as 
 \[
N=r+d+q, M=d+p. 
 \]
Instead of thinking of sums of tensor products of symmetric powers above, we shall visualize our complex as the vector space spanned by all linear combinations of sets of ``LEGO pieces'' of the following four types: 
\begin{enumerate}
\item a corolla whose vertex carries a label from $\overline{\calO}(\text{out}(v))$, the incoming half-edge is labelled by an element $\xi\in V^*$, and each outgoing half-edge $e$ is labelled by an element $u_e\in V$ (here we implicitly impose an equivalence relation saying that if we we simultaneously act on the label of the vertex and on labels of outgoing half-edges by the same permutation, the corolla does not change); 
\item a ``cul-de-sac vertex'', that is a vertex $v$ with $n_v\ge 2$ outgoing half-edges and no incoming half-edges, where $v$ carries a label from $\overline{|\partial(\calO)|}(\text{out}(v))=\overline{\calO^{\circlearrowleft}_\w}(\text{out}(v))$, and each outgoing half-edge $e$ is labelled by an element $u_e\in V$ (here we implicitly impose an equivalence relation saying that if we we simultaneously act on the label of the vertex and on labels of outgoing half-edges by the same permutation, the corolla does not change); 
\item a source whose vertex uniquely corresponds to an element of $\{1,\ldots,q\}$, and the outgoing half-edge is labelled by an element $u\in V$;   
\item a sink whose vertex $v$ uniquely corresponds to an element of $\{1,\ldots,p\}$, and the incoming half-edge is labelled by an element $\xi\in V^*$.
\end{enumerate}
Corollas with $n_v$ outgoing half-edges represent elements of the vector spaces
$sV^*\otimes\calO(n_v)\otimes_{\k S_{n_v}}V^{\otimes n_v}$; they have homological degree $1$, and so reordering them creates a sign. Cul-de-sac vertices with $n_v$ outgoing half-edges represent elements of the vector spaces $\overline{|\partial(\calO)|}(n_v)\otimes_{\k S_{n_v}}V^{\otimes n_v}$; they have homological degree $0$, and so reordering them does not create any signs. The sources and the sinks appear in the natural order of their vertex labels, reproducing the vector space $(V^*)^{\otimes p}\otimes V^{\otimes q}$.
 
From the First and the Second Fundamental Theorems for $\mathfrak{gl}(V)$, it follows that to have non-zero invariants at all we must have $N=M$, and that for the ``stable range'' $\dim(V)\ge M=d+p$ the $\mathfrak{gl}(V)$-invariants in this module are isomorphic to a vector space with a basis of (equivalence classes of) graphs obtained by assembling the LEGO pieces together, that is directed decorated graphs $\Gamma$ obtained by matching all the incoming half-edges with all the outgoing half-edges. Specifically, we obtain that each vertex $v$ of $\Gamma$ is one of the following three types:
\begin{enumerate}
\item a corolla with one incoming half-edge and $n_v\ge 1$ outgoing half-edges labelled by an element from $\overline{\calO}(\text{out}(v))$;   
\item a cul-de-sac vertex with $n_v\ge 1$ outgoing half-edges labelled by an element from $\overline{\calO^{\circlearrowleft}_\w}(\text{out}(v))$;
\item a source with one outgoing half-edge uniquely corresponding to an element of $\{1,\ldots,q\}$;    
\item a sink with one incoming half-edge uniquely corresponding to an element of $\{1,\ldots,p\}$.  
\end{enumerate}
Two such graphs are equivalent if there is an isomorphism of them that agrees with all labels. As above, corollas have homological degree $1$, and so reordering them creates a sign, and cul-de-sac vertices have homological degree zero, and so reordering them does not create any signs. Such a graph corresponds to an invariant that pairs the copies of $V$ and $V^*$ according to the edges between the vertices in the graph. 

Similarly to Lemma \ref{lm:fuchs}, we see that connected graphs that appear as basis elements in the stable range are precisely the connected graphs appearing as basis elements of the wheeled bar construction $\mathsf{B}^{\circlearrowleft}(\calO^{\circlearrowleft})$. Indeed, our corollas and cul-de-sac vertices are precisely the building blocks of the monad of rooted trees and wheels. Moreover, the labels of the corollas are taken from $\overline{\calO}=\overline{\calO^{\circlearrowleft}_\o}$, the labels of cul-de-sac vertices are taken from $\overline{|\partial(\calO)|}=\overline{\calO^{\circlearrowleft}_\w}$, and the signs arising from reordering them are in agreement with those coming from unordered tensor products in the definition of $\mathsf{B}^{\circlearrowleft}(\calO^{\circlearrowleft})=\mathsf{T}^{\circlearrowleft}(s\overline{\calO^{\circlearrowleft}_\o}\oplus \overline{\calO^{\circlearrowleft}_\w})$.

Finally, the differential 
\begin{multline*}
C^{(w)}_{d+1}(\Der^+(\calO(V))\ltimes_{\Div}\overline{|\partial(\calO)|(V)},\Hom(V^{\otimes p},V^{\otimes q}))^{\mathfrak{gl}(V)}\to \\ 
C^{(w)}_d(\Der^+(\calO(V))\ltimes_{\Div}\overline{|\partial(\calO)|(V)},\Hom(V^{\otimes p},V^{\otimes q}))^{\mathfrak{gl}(V)} 
\end{multline*}
admits an elegant formula in the stable range $\dim(V)>d+p$, where the strict inequality comes from the fact that we need to know basis elements in homological degree $d+1$ as well. As before, we note that since the action of $\Der(\calO(V))$ on $\mathfrak{gl}(V)$-modules is via the augmentation, the subalgebra $\Der^+(\calO(V))$ acts on all such modules trivially, and so the Chevalley--Eilenberg differential only uses the Lie bracket and the differential in $\Der^+(\calO(V))\ltimes_{\Div}\overline{|\partial(\calO)|(V)}$. Similarly to how the Lie algebra of derivations comes from a pre-Lie structure, the action of derivations on $|\partial(\calO)|(V)$ can be obtained by simultaneously using the right action of $\calO$ on $|\partial(\calO)|$ and computing the pairing of $V^*$ with $V$. Also, by the definition of divergence, it is computed by simultaneously projecting onto the commutator quotient and computing the pairing of $V^*$ with $V$. As in the proof of Theorem \ref{th:graphcx1}, for the complex of invariants, this means that the computation of each Lie bracket in $\Der^+(\calO(V))\ltimes_{\Div}\overline{|\partial(\calO)|(V)}$ contracts appropriate non-loop edges of graphs, and the computation of the differential contracts loops in all possible ways. (As before, 
one way to check that the signs match would be to dualize and consider the Chevalley--Eilenberg cohomological complex and the cobar construction; for each of them the differential is a derivation and hence is ``local'', that is applies to each corolla individually, and the fact that the two differentials agree on corollas is almost obvious.) 

Passing to disconnected graphs corresponds to computing tensor and symmetric powers with respect to the Cauchy product, which is precisely the coPROP completion of the wheeled bar construction viewed as a wheeled cooperad. The statement on naturality is obvious.
\end{proof}

We shall now explain how to use that result for the homology of the Lie algebra of divergence-free derivations. 

\begin{theorem}\label{th:main2}
Let $\calO$ be an augmented operad, and $V$ a finite-dimensional vector space. The vector spaces
 \[
H_\bullet(\SDer^+(\calO(V)),\Hom(V^{\otimes \bullet_1},V^{\otimes \bullet_2}))^{\mathfrak{gl}(V)}
 \] 
stabilize as $\dim(V)\to\infty$, with the stable value given by the coPROP completion of the wheeled cooperad $H_\bullet(\mathsf{B}^{\circlearrowleft}(\calO^{\circlearrowleft}))$. This isomorphism is natural with respect to operad morphisms.
\end{theorem}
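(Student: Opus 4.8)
The plan is to deduce Theorem~\ref{th:main2} from Theorem~\ref{th:graphcx2} in exactly the way Theorem~\ref{th:main1} was deduced from Theorem~\ref{th:graphcx1}, once the Chevalley--Eilenberg homology of the auxiliary dg Lie algebra $L:=\Der^+(\calO(V))\ltimes_{\Div}\overline{|\partial(\calO)(V)|}$ has been identified with that of $\SDer^+(\calO(V))$. The reason $L$ is introduced at all is that its Chevalley--Eilenberg complex is graphically transparent (corollas together with cul-de-sac vertices), which is why Theorem~\ref{th:graphcx2} can match it with the wheeled bar construction of $\calO^{\circlearrowleft}$; by contrast $\SDer^+(\calO(V))$, being cut out of $\Der^+(\calO(V))$ as the kernel of divergence, admits no such direct description. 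So the first task is to exhibit $L$ as a resolution of $\SDer^+(\calO(V))$.

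First I would analyze the underlying complex of $L$, which is concentrated in homological degrees $0$ and $-1$ with $L_0=\Der^+(\calO(V))$, $L_{-1}=s^{-1}\overline{|\partial(\calO)(V)|}$, and differential $\Div$. Hence $H_0(L)=\ker(\Div)=\SDer^+(\calO(V))$ and $H_{-1}(L)=\operatorname{coker}(\Div)$, so the canonical inclusion $\SDer^+(\calO(V))\hookrightarrow L$ of the degree-zero cycles is a quasi-isomorphism of dg Lie algebras precisely when the divergence map $\Der^+(\calO(V))\to\overline{|\partial(\calO)(V)|}$ is surjective. I would establish this surjectivity in the stable range by an explicit integration argument: a generator of $\overline{|\partial(\calO)(V)|}$ is represented by a class $p(u_1,\ldots,\star,\ldots,u_m)$ with $p\in\overline{\calO}$ and $u_i\in V$, and once $\dim(V)$ exceeds the arity of $p$ (exactly the stable regime) one may choose a basis vector $x_j$ of $V$ not occurring among the $u_i$; the elementary derivation sending $x_j$ to $p(u_1,\ldots,x_j,\ldots,u_m)$ and all other generators to zero then has divergence equal to the prescribed class, since $x_j$ occurs only in the distinguished slot. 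Thus $\operatorname{coker}(\Div)$ vanishes on every fixed weight/arity piece for $\dim(V)$ large.

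Granting the quasi-isomorphism $\SDer^+(\calO(V))\simeq L$, the module $M=\Hom(V^{\otimes p},V^{\otimes q})$ is trivial over both Lie algebras (the $\Der(\calO(V))$-action factors through the augmentation, which annihilates $\Der^+(\calO(V))$, hence $\SDer^+(\calO(V))$ and the abelian summand of $L$), so the coefficients split off the Chevalley--Eilenberg complex, and since in characteristic zero symmetric powers preserve quasi-isomorphisms we obtain a $\mathfrak{gl}(V)$-equivariant isomorphism $H_\bullet(\SDer^+(\calO(V)),M)\cong H_\bullet(L,M)$. It then remains to repeat the argument of Theorem~\ref{th:main1}: Theorem~\ref{th:graphcx2} identifies the invariant complexes $C_\bullet(L,\Hom(V^{\otimes\bullet_1},V^{\otimes\bullet_2}))^{\mathfrak{gl}(V)}$ stably with the coPROP completion of $\mathsf{B}^{\circlearrowleft}(\calO^{\circlearrowleft})$ compatibly with the differentials, and since computing homology commutes with taking $\mathfrak{gl}(V)$-invariants while the coPROP completion is assembled componentwise by the Künneth formula, the stable value of $H_\bullet(L,M)^{\mathfrak{gl}(V)}$ is the coPROP completion of $H_\bullet(\mathsf{B}^{\circlearrowleft}(\calO^{\circlearrowleft}))$. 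Combining the two identifications, and noting that taking $\mathfrak{gl}(V)$-invariants is exact in characteristic zero, yields the theorem, with naturality being immediate.

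The step I expect to be the main obstacle is the surjectivity of divergence (equivalently, the stable acyclicity of $\operatorname{coker}(\Div)$), since this is the one genuinely new input beyond the formal wheeled bar-construction machinery. Here one must be careful that the fresh-variable construction really lands in $\Der^+(\calO(V))$ (so that the arity of $p$ is at least two, with the weight-zero contributions arising for non-connected $\calO$ treated separately), and that surjectivity, although it may fail on the top-arity pieces for a fixed $V$, nevertheless holds on each fixed graded piece once $\dim(V)$ is taken large enough, which is all that the stable statement requires.
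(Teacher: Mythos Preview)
Your proposal is correct and follows essentially the same strategy as the paper. The paper likewise reduces to Theorem~\ref{th:graphcx2} by showing that $\SDer^+(\calO(V))\hookrightarrow\Der^+(\calO(V))\ltimes_{\Div}\overline{|\partial(\calO)(V)|}$ induces an isomorphism on Chevalley--Eilenberg homology (there stated as a general lemma about $\mathfrak{g}\ltimes_\phi M$ for a surjective $1$-cocycle $\phi$, proved by a Koszul-type spectral sequence rather than by invoking that symmetric powers preserve quasi-isomorphisms, though the paper explicitly notes that weak equivalence is the underlying heuristic), and likewise establishes stable surjectivity of divergence by the same fresh-variable integration argument you sketch, obtaining surjectivity on elements of weight strictly less than $\dim(V)$.
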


\begin{proof}
Let us briefly go back to the general case of the Lie algebra $\mathfrak{g}\ltimes_\phi M$. From the $1$-cocycle equation, it follows that $\mathfrak{g}^\phi:=\ker\phi$ is a Lie subalgebra of $\mathfrak{g}$. In our next result, homology of that Lie subalgebra appears (heuristically, this result corresponds to the fact that $\mathfrak{g}^\phi\hookrightarrow \mathfrak{g}\ltimes_\phi M$ is a weak equivalence of dg Lie algebras, but we spell out an elementary proof for the convenience of the reader). 

\begin{lemma}\label{lm:derived-div}
Suppose that $\phi$ is a surjective map $\mathfrak{g}\to M$. We have
 \[
H_\bullet(\mathfrak{g}\ltimes_\phi M,\k)\cong H_\bullet(\mathfrak{g}^\phi,\k).
 \]
\end{lemma}

\begin{proof}
The Chevalley--Eilenberg differential of $C_\bullet(\mathfrak{g}\ltimes_\phi M,\k)$ splits as a sum of the ``horizontal part''
 \[
d_h\colon C_i(\mathfrak{g}\ltimes_\phi M,\k)\to C_{i-1}(\mathfrak{g}\ltimes_\phi M,\k)
 \] 
using the Lie algebra structure only and the ``vertical part''
 \[
d_v\colon C_i(\mathfrak{g}\ltimes_\phi M,\k)\to C_{i-1}(\mathfrak{g}\ltimes_\phi M,\k)
 \] 
using the differential only. Those maps are anticommuting differentials; moreover, if one places $S^i(s\mathfrak{g})\otimes S^j(M)\subset S(s(g\oplus s^{-1}M))=C_\bullet(\mathfrak{g}\ltimes_\phi M,\k)$ in bi-degree $(i+j,-j)$, these maps make $C_\bullet(\mathfrak{g}\ltimes_\phi M,\k)$ into a homological double complex $C_{\bullet,\bullet}$ concentrated in the fourth quadrant. For such a double complex, the column filtration on the direct sum total complex is bounded below and exhaustive, so by the Classical Convergence Theorem \cite[Th.~5.5.1]{MR1269324} the corresponding spectral sequence $\{{}^IE^r_{p,q}\}$ converges to $H_\bullet(\mathrm{Tot}^{\oplus}C_{\bullet,\bullet})$, which is precisely the homology of the Chevalley--Eilenberg complex we are interested in.    

By choosing a section of $\phi$ (and thus identifying $\mathfrak{g}$ with $\mathfrak{g}^\phi\oplus M$ as vector spaces), we may identify the vertical differential $d_h$ with $\id\otimes d_K$ on $S(s\mathfrak{g}^\phi)\otimes S(sM\oplus M)$ with $d_K$ being the standard Koszul differential on $S(sM\oplus M)$, so 
we have 
 \[
{}^IE^1_{p,q}=
\begin{cases}
C_p(\mathfrak{g}^\phi,\k), \quad q=0,\\
\qquad 0, \qquad  \,\,\, q\ne 0,
\end{cases}
 \]
which implies that our spectral sequence abuts on the second page, giving us the isomorphism
$H_\bullet(\mathfrak{g}\ltimes_\phi M,\k)\cong H_\bullet(\mathfrak{g}^\phi,\k)$ 
that we wish to establish. 
\end{proof} 

We wish to apply this result to the case of $\mathfrak{g}=\Der^+(\calO(V))$, $M=\overline{|\partial(\calO)(V)|}$, the augmentation ideal part of the commutator quotient, and $\phi$ the divergence. On elements of $\mathfrak{g}$, the divergence does indeed take values in $M$, and moreover we clearly have $\mathfrak{g}^\phi=\SDer^+(\calO(V))$. However, at a first glance Lemma \ref{lm:derived-div} is not directly applicable since the divergence is not in general surjective. Nevertheless (and that is sufficient for our purposes), it is stably surjective \cite[Prop.~8.20]{Powell21}. For convenience, we prove the following version of stable surjectivity.

\begin{lemma}
The divergence $\Div\colon \Der^+(\calO(V))\to \overline{|\partial(\calO)(V)|}$ is surjective on elements of weight strictly less than $\dim(V)$.
\end{lemma}

\begin{proof}
Suppose that $\dim(V)=n$, and choose a basis $x_1,\ldots,x_n$ of $V$. Let $u\in \overline{|\partial(\calO)(V)|}$ be an element of weight $w<n$. Note that we can write $u=u^{(1)}+\cdots+u^{(n)}$, where $u^{(i)}$ does not depend on $x_i$. To represent the element $u^{(i)}$ as the divergence of some derivation, we lift it to an element $f^{(i)}$ of the same weight in $\overline{\partial(\calO)(V)}$ arbitrarily, and note that $f^{(i)}$ can be written in a nearly tautological way as $\frac{\partial g^{(i)}}{\partial x_i}$ for an element $g^{(i)}\in\overline{\calO}(V)$ of degree one in $x_i$. It follows that the derivation $D_i$ of $\calO(V)$ that sends $x_i$ to $g^{(i)}$ and all other generators to zero satisfies $\Div(D_i)=u^{(i)}$, and so we have $\Div(D_1+\cdots+D_n)=u$, as required.  
\end{proof}

Thus, we have the following isomorphisms of stable limits:
\begin{multline*}
\lim_{\dim(V)\to\infty}\left(H_\bullet(\SDer^+(\calO(V)),\Hom(V^{\otimes p},V^{\otimes q}))\right)^{\mathfrak{gl}(V)}\cong\\
\lim_{\dim(V)\to\infty}\left(H_\bullet(\SDer^+(\calO(V)),\k)\otimes \Hom(V^{\otimes p},V^{\otimes q})\right)^{\mathfrak{gl}(V)}\cong\\
\lim_{\dim(V)\to\infty}\left(H_\bullet(\Der^+(\calO(V))\ltimes_{\Div} \overline{|\partial(\calO)(V)|},\k)\otimes \Hom(V^{\otimes p},V^{\otimes q})\right)^{\mathfrak{gl}(V)}\cong\\
\lim_{\dim(V)\to\infty}\left(H_\bullet(\Der^+(\calO(V))\ltimes_{\Div} \overline{|\partial(\calO)(V)|}, \Hom(V^{\otimes p},V^{\otimes q}))\right)^{\mathfrak{gl}(V)}.
\end{multline*}
Here the first and the last isomophism come from the fact that the action of our algebras on the $\mathfrak{gl}(V)$-module $\Hom(V^{\otimes p},V^{\otimes q})$ is trivial, and the middle isomorphism follows from Lemma \ref{lm:derived-div} and the stable surjectivity of divergence (for each fixed weight $w$ of the homology, Lemma \ref{lm:derived-div} requires surjectivity of divergence on elements of weight at most $w$). 

To complete the proof, we argue as in the proof of Theorem~\ref{th:main1}: for each $w,d,p,q$, then as long as $\dim(V)>d+p$, the components of weight $w$ and homological degrees $d$ and $d+1$ of the complex 
 \[
C_\bullet(\Der^+(\calO(V))\ltimes_{\Div} \overline{|\partial(\calO)(V)|}, \Hom(V^{\otimes \bullet_1},V^{\otimes \bullet_2}))^{\mathfrak{gl}(V)}
 \] 
are nonzero only for $p=w+q$ and are isomorphic to the corresponding components of the wheeled bar construction $\mathsf{B}^{\circlearrowleft}(\calO^{\circlearrowleft})$; moreover, the differential of the Chevalley--Eilenberg complex is identified under that isomorphism with the differential of the wheeled bar construction. Since computing homology commutes with taking $\mathfrak{gl}(V)$-invariants, we conclude that for such $w,d,p,q$, we have 
 \[
H^{(w)}_d(\Der^+(\calO(V))\ltimes_{\Div} \overline{|\partial(\calO)(V)|}, \Hom(V^{\otimes \bullet_1},V^{\otimes \bullet_2}))^{\mathfrak{gl}(V)}\cong H^{(w)}_d(\mathsf{B}^{\circlearrowleft}(\calO^{\circlearrowleft})).
 \] 
It remains to apply the K\"unneth formula to obtain the desired result for the coPROP completion.
\end{proof}

Let us also record an analogue of Proposition \ref{prop:thmain1}. In order to state a result using invariants of $\mathfrak{gl}(V)$, we shall use an auxiliary Lie algebra $\SDer^\wedge(\calO(V))$ of derivations with constant divergence; that Lie algebra included in the exact sequence
 \[
0\to\SDer^+(\calO(V))\to \SDer^\wedge(\calO(V))\to \mathfrak{gl}(V)\to 0.
 \]

\begin{proposition}\label{prop:thmain2}
Let $\calO$ be an augmented operad, and $V$ a finite-dimensional vector space. The homology 
 \[
H_\bullet(\SDer^\wedge(\calO(V)),\Hom(V^{\otimes p},V^{\otimes q}))
 \] 
is isomorphic to the cofree $H_\bullet(\mathfrak{gl}(\k))$-comodule cogenerated by 
 \[
H_\bullet(\SDer^+(\calO(V)),\Hom(V^{\otimes p},V^{\otimes q}))^{\mathfrak{gl}(V)}. 
 \] 
The latter is isomorphic to  
 \[
(H_\bullet(\SDer^+(\calO(V)),\k)\otimes\Hom(V^{\otimes p},V^{\otimes q}))^{\mathfrak{gl}(V)}.
 \] 
\end{proposition}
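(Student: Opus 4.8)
The plan is to follow the proof of Proposition~\ref{prop:thmain1} essentially verbatim, with the pair $(\Der(\calO(V)),\Der^+(\calO(V)))$ replaced by $(\SDer^\wedge(\calO(V)),\SDer^+(\calO(V)))$. First I would run the Hochschild--Serre spectral sequence of the short exact sequence
\[
0\to\SDer^+(\calO(V))\to \SDer^\wedge(\calO(V))\to \mathfrak{gl}(V)\to 0,
\]
that is, for the ideal $\mathfrak{h}=\SDer^+(\calO(V))$ inside $\mathfrak{g}=\SDer^\wedge(\calO(V))$ with coefficients in $M=\Hom(V^{\otimes p},V^{\otimes q})$. The three structural facts that powered the argument for all derivations persist: $\mathfrak{h}$ is an ideal (the kernel of the augmentation), it acts trivially on $\mathfrak{g}/\mathfrak{h}\cong\mathfrak{gl}(V)$, and it acts trivially on $M$ because the action of $\SDer^\wedge(\calO(V))$ on a $\mathfrak{gl}(V)$-module factors through the augmentation. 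Hence the first page trivialises the $\mathfrak{h}$-action on the coefficients, giving
\[
E^2_{m,n}=H_m(\mathfrak{gl}(V),H_n(\SDer^+(\calO(V)),\k)\otimes\Hom(V^{\otimes p},V^{\otimes q})).
\]

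Next I would apply the splitting isomorphism \cite[Th.~2.1.2]{MR0874337}, which requires that $H_n(\SDer^+(\calO(V)),\k)$ be, in each fixed weight, a submodule of a finite direct sum of copies of $V^{\otimes N}\otimes (V^*)^{\otimes M}$. This holds because $\SDer^+(\calO(V))$ is a $\mathfrak{gl}(V)$-submodule of $\Der^+(\calO(V))$, which is assembled from copies of $V^*\otimes\overline{\calO}(k)\otimes_{\k S_k}V^{\otimes k}$; its Chevalley--Eilenberg complex, and hence each homology group, is therefore a subquotient of finite sums of tensor products of copies of $V$ and $V^*$, and in characteristic zero complete reducibility of rational $\mathfrak{gl}(V)$-modules turns such a subquotient into a submodule. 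The theorem then yields
\[
E^2_{m,n}\cong H_m(\mathfrak{gl}(V),\k)\otimes \bigl(H_n(\SDer^+(\calO(V)),\k)\otimes\Hom(V^{\otimes p},V^{\otimes q})\bigr)^{\mathfrak{gl}(V)}.
\]

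To see that the spectral sequence abuts on the second page I would reproduce the comodule argument of Proposition~\ref{prop:thmain1}: the $C_\bullet(\mathfrak{gl}(V),\k)$-comodule structure on the Chevalley--Eilenberg complex, followed by projection to invariants $C_\bullet(\mathfrak{gl}(V),\k)^{\mathfrak{gl}(V)}\cong H_\bullet(\mathfrak{gl}(V),\k)$, makes the complex an $H_\bullet(\mathfrak{gl}(V),\k)$-comodule compatible with the Hochschild--Serre filtration. The cogenerators are exactly $E^2_{0,j}\cong\bigl(H_j(\SDer^+(\calO(V)),\k)\otimes M\bigr)^{\mathfrak{gl}(V)}$, and I would show they survive on every page by exhibiting the same explicit left inverse as before, namely the map that kills any chain containing a factor from $\mathfrak{gl}(V)$ and then projects to invariants. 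This establishes the first isomorphism. The second isomorphism is immediate: since $\SDer^+(\calO(V))$ acts trivially on $M$, one has $C_\bullet(\SDer^+(\calO(V)),M)=C_\bullet(\SDer^+(\calO(V)),\k)\otimes M$ with $M$ carrying the zero differential, so the K\"unneth formula gives a $\mathfrak{gl}(V)$-equivariant isomorphism $H_\bullet(\SDer^+(\calO(V)),M)\cong H_\bullet(\SDer^+(\calO(V)),\k)\otimes M$, and taking invariants commutes with it.

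I expect the only point genuinely requiring care, and hence the main obstacle, to be the verification that passing from $\Der^+(\calO(V))$ to the divergence-free subalgebra $\SDer^+(\calO(V))$ preserves the polynomial nature of the $\mathfrak{gl}(V)$-module structure on homology that is needed to apply \cite[Th.~2.1.2]{MR0874337}. Since $\SDer^+(\calO(V))$ is cut out inside $\Der^+(\calO(V))$ by the $\mathfrak{gl}(V)$-equivariant divergence cocycle, it remains a rational $\mathfrak{gl}(V)$-representation of the required type, so the argument carries over without change.
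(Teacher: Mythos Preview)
Your proposal is correct and is exactly the approach the paper intends: the paper's own ``proof'' of this proposition consists of the single sentence that it is proved in the exact same way as Proposition~\ref{prop:thmain1}, and your write-up carries out precisely that verbatim transposition, including the identification of the one point (polynomiality of the $\mathfrak{gl}(V)$-module structure on $H_\bullet(\SDer^+(\calO(V)),\k)$) that genuinely needs to be checked in the new setting.
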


This result is proved in the exact same way as Proposition \ref{prop:thmain1}, so we omit the proof. % Note that the difference between the Lie algebras $\SDer(\calO(V))$ and $\SDer^\wedge(\calO(V))$ is not visible in their ``positive'' parts: those are given by the same Lie algebra $\SDer^+(\calO(V))$.
\\

The following result is well known, see, for example \cite[Exercice 10.2.1]{MR1600246}. It was one of the inspirations for Theorem \ref{th:main2}, but we can also use that theorem to re-prove it.

\begin{corollary}
Let $A$ be a unital $\k$-algebra, and let $\mathfrak{sl}_n(A)$ to be the kernel of the map $\mathfrak{gl}_n(A)\to A/[A,A]$ sending $f$ to the image of $\tr(f)$ in the commutator quotient. If one defines $\mathfrak{sl}(A)=\varinjlim\mathfrak{sl}_n(A)$, we have a Hopf algebra isomorphism
 \[
H_\bullet(\mathfrak{sl}(A))\cong S^c(\overline{HC}_{\bullet-1}(A)),     
 \]
where $\overline{HC}$ is the reduced cyclic homology (without $HC_0(A)=A/[A,A]$). 
\end{corollary}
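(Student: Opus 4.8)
The plan is to mimic the proof of Corollary~\ref{cor:LQT}, replacing the trivial wheeled operad by the wheeled completion so that Theorem~\ref{th:main2} applies. As there, I would adjoin a formal unit to the (already unital) algebra $A$, obtaining $A_+$ with augmentation ideal $\overline{A_+}\cong A$, and regard $\calO:=A_+$ as an augmented operad concentrated in arity one. By Example~\ref{ex:divforalg} the divergence of a derivation of $\calO(\k^n)$ is the composite $\mathfrak{gl}_n(A_+)\xrightarrow{\tr}A_+\to|A_+|$, and restricting to $\Der^+(\calO(\k^n))\cong\mathfrak{gl}_n(A)$ identifies it with $\mathfrak{gl}_n(A)\xrightarrow{\tr}A\to A/[A,A]$. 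Hence $\SDer^+(\calO(\k^n))$ is exactly $\mathfrak{sl}_n(A)$, which is what makes the $\calO^{\circlearrowleft}$-version of the theorem produce $\mathfrak{sl}$ rather than $\mathfrak{gl}$.

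The heart of the argument is the computation of $H_\bullet(\mathsf{B}^{\circlearrowleft}(\calO^{\circlearrowleft}))$. Since $\calO$ sits in arity one, $\partial(\overline{\calO})$ and $\overline{\calO^{\circlearrowleft}_\w}=\overline{|\partial(\calO)|}\cong|A|$ are both supported on the empty set, so all corollas are univalent and all cul-de-sac vertices are isolated. Thus, in the notation of Section~\ref{sec:wheeledbar}, the construction $\mathsf{B}^{\circlearrowleft}(\calO^{\circlearrowleft})$ decomposes into an operadic part $\mathsf{B}(A_+)$ of linear chains, which is acyclic because $A$ is unital and hence $A_+\cong A\times\k$ with $\k$ a projective direct factor, and a wheeled part whose underlying complex is $\Cyc(sA)\oplus|A|$, with the cul-de-sac summand $|A|$ placed in homological degree $0$. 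The differential on the wheeled part is the cyclic differential on $\Cyc(sA)$ together with the single loop-erasing trace term $\Cyc^1(sA)=sA\to|A|$, namely $s^{-1}$ followed by the projection $A\twoheadrightarrow A/[A,A]$.

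The remaining computation is then elementary: the map $sA\to|A|$ is surjective and its kernel $s[A,A]$ coincides with the image of the cyclic differential from $\Cyc^2(sA)$, so both $H_0$ and $H_1$ of the wheeled part vanish, while in degrees $\ge 2$ the cul-de-sac summand is invisible and one recovers $H_k(\Cyc(sA))=HC_{k-1}(A)$. In other words, the nonzero trace of the wheeled completion cancels precisely the $HC_0(A)=A/[A,A]$ class that the Loday--Quillen--Tsygan computation produced, leaving
\[
H_\bullet(\mathsf{B}^{\circlearrowleft}(\calO^{\circlearrowleft})_\o)=0,\qquad H_\bullet(\mathsf{B}^{\circlearrowleft}(\calO^{\circlearrowleft})_\w)\cong\overline{HC}_{\bullet-1}(A).
\]
This is the step I expect to be the main obstacle: one must set up the wheeled bar complex of the wheeled completion with some care (the degrees of the cul-de-sac generators, the fact that only the one-vertex loop contributes a trace term, and the matching of signs) and then check that the trace indeed kills the $HC_0$ contribution. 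Everything else is formal.

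It remains to assemble the conclusion exactly as in Corollary~\ref{cor:LQT}. The homology $\overline{HC}_{\bullet-1}(A)$ is concentrated in the $(0,0)$-component, so its coPROP completion is the cofree cocommutative coalgebra $S^c(\overline{HC}_{\bullet-1}(A))$, and Theorem~\ref{th:main2} then gives $\lim_{n\to\infty}H_\bullet(\mathfrak{sl}_n(A),\k)^{\mathfrak{gl}_n(\k)}\cong S^c(\overline{HC}_{\bullet-1}(A))$. Since $A$ is unital, $\mathfrak{sl}_n(\k)\subset\mathfrak{sl}_n(A)$ acts by inner derivations and the scalar matrices act centrally, so the $\mathfrak{gl}_n(\k)$-action on $H_\bullet(\mathfrak{sl}_n(A),\k)$ is trivial and the invariants agree with the full homology in the stable limit. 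Finally, the block-diagonal inclusions $\mathfrak{sl}_n(A)\oplus\mathfrak{sl}_m(A)\hookrightarrow\mathfrak{sl}_{n+m}(A)$ upgrade the coalgebra isomorphism to the desired Hopf algebra isomorphism, exactly as in the Loday--Quillen--Tsygan case.
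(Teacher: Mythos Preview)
Your proposal is correct and follows essentially the same approach as the paper: adjoin a unit, identify $\SDer^+(\calO(\k^n))$ with $\mathfrak{sl}_n(A)$ via Example~\ref{ex:divforalg}, compute $H_\bullet(\mathsf{B}^{\circlearrowleft}(\calO^{\circlearrowleft}))$, and conclude via Theorem~\ref{th:main2} exactly as in Corollary~\ref{cor:LQT}. Your treatment of the wheeled bar homology is in fact more explicit than the paper's, which simply asserts that ``the wheeled part is the loop-less part of the cyclic bar construction, which precisely computes the (shifted) reduced cyclic homology''; your direct verification that the trace term $sA\to|A|$ kills exactly $HC_0(A)$ spells out what the paper leaves implicit.
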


\begin{proof}
Like in Corollary \ref{cor:LQT}, we consider the algebra $A_+$ which is obtained from~$A$ by adjoining a unit (even though $A$ is already unital), and view $A_+$ as an operad~$\calO$ concentrated in arity one. According to Example \ref{ex:divforalg}, for a vector space $V$ of dimension $n$, the Lie algebra $\mathfrak{sl}_n(A)$ is exactly $\SDer^+(\calO(V))$, so 
\begin{multline*}
(C_\bullet(\SDer^+(\calO(V)),\k)\otimes\Hom(V^{\otimes p},V^{\otimes q}))^{\mathfrak{gl}(V)}\cong\\ (C_\bullet(\mathfrak{sl}_n(A),\k)\otimes\Hom(V^{\otimes p},V^{\otimes q}))^{\mathfrak{gl}(V)}.
\end{multline*}
Theorem \ref{th:main2} implies that the stable limit of 
 \[
(C_\bullet(\mathfrak{sl}_n(A),\k)\otimes\Hom(V^{\otimes p},V^{\otimes q}))^{\mathfrak{gl}(V)}
 \]
for $\dim(V)\to\infty$ is the coPROP completion of $\mathsf{B}^{\circlearrowleft}((A_+)^{\circlearrowleft})$.
As we saw in the proof of Theorem \ref{th:main2}, the operadic part of the latter complex is the bar construction $\mathsf{B}(A_+)$, which is acyclic since the augmentation ideal of $A_+$ is a unital algebra, and the wheeled part is the loop-less part of the cyclic bar construction, which precisely computes the (shifted) reduced cyclic homology. Since once again the homology is concentrated in the $(0,0)$-component of our wheeled cooperad, the coPROP completion is just the cofree cocommutative coalgebra $S^c(\overline{HC}_{\bullet-1}(A))$, and the proof is completed in the same way as for Corollary \ref{cor:LQT}.
\end{proof}

Note that the results of Powell \cite[Th.~12.1]{Powell21} can be used to deduce that the stable homology of $\SDer^+(\calO(V))$ in degree one is concentrated in weight one for any binary operad $\calO$. 
Theorem \ref{th:main2} allows us to prove one general statement for which one generally has no hope if considering all derivations, see Example~\ref{ex:nonkoszul}. To state it, recall that a weight graded Lie algebra $\mathfrak{g}$ is said to be Koszul in weight $w$ if its homology in weight $w$ is concentrated in homological degree $w$. 

\begin{corollary}\label{cor:Koszul}
The wheeled operad $\calO^{\circlearrowleft}$ is Koszul in weights $\le r_0$ if and only the Lie algebra $\SDer^+(\calO(V))$ is Koszul in all weights $r<\min(r_0,\frac13\dim(V))$ for every finite-dimensional vector space $V$.

In particular, the wheeled operad $\calO^{\circlearrowleft}$ is Koszul if and only if the Lie algebra $\SDer^+(\calO(V))$ is Koszul in all weights $r<\frac13\dim(V)$ for every finite-dimensional vector space $V$.
\end{corollary}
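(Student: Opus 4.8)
The plan is to read everything off Theorem~\ref{th:main2}, which identifies the stable value of the matrix elements $(H_\bullet(\SDer^+(\calO(V)),\k)\otimes\Hom(V^{\otimes p},V^{\otimes q}))^{\mathfrak{gl}(V)}$ with the coPROP completion of the wheeled cooperad $\calW:=H_\bullet(\mathsf{B}^{\circlearrowleft}(\calO^{\circlearrowleft}))$, supplemented by two elementary bookkeeping facts. The first is a \emph{degree bound}: in weight $r$, both $\calW$ and its coPROP completion are concentrated in homological degrees $d\le r$. Indeed, in the graph description of the proof of Theorem~\ref{th:graphcx2} the homological degree equals the number of corollas; each corolla is labelled by the augmentation ideal $\overline{\calO}$ and so has weight at least one, while each cul-de-sac vertex has nonnegative weight and homological degree zero, so summing over a (possibly disconnected) graph gives $r\ge d$. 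Since weight and homological degree are both additive under the Cauchy products defining the coPROP completion, the bound persists after completion. The second fact is that the coPROP completion is concentrated on the diagonal (weight equal to degree) if and only if $\calW$ itself is: the connected part of the completion is exactly $\calW$, and a Cauchy product of diagonal classes is again diagonal.

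Next I would pin down the stable range. The module $H_d^{(r)}(\SDer^+(\calO(V)),\k)$ sits inside a sum of copies of $V^{\otimes(r+d)}\otimes(V^*)^{\otimes d}$, so by semisimplicity of rational $\mathfrak{gl}(V)$-representations it vanishes if and only if its \emph{dual} matrix element $(H_d^{(r)}(\SDer^+(\calO(V)),\k)\otimes\Hom(V^{\otimes(r+d)},V^{\otimes d}))^{\mathfrak{gl}(V)}$ does, the latter containing the identity endomorphism of the module. For this matrix element the stable range from the proof of Theorem~\ref{th:graphcx2} is $\dim(V)>d+(r+d)=r+2d$, and combining it with the degree bound $d\le r$ shows that as soon as $\dim(V)>3r$, equivalently $r<\frac13\dim(V)$, every homological degree $d$ occurring in weight $r$ lies in the stable range, where by Theorem~\ref{th:main2} the matrix element coincides with the corresponding component of the coPROP completion of $\calW$.

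With these preparations both implications are immediate. For the forward direction, Koszulness of $\calO^{\circlearrowleft}$ in weights $\le r_0$ means $\calW$ is on the diagonal in those weights, hence so is its coPROP completion; for $r<\min(r_0,\frac13\dim(V))$ the detecting matrix elements of $H_\bullet(\SDer^+(\calO(V)),\k)$ then agree with diagonal classes, forcing $H_d^{(r)}(\SDer^+(\calO(V)),\k)=0$ for $d\ne r$, which is Koszulness of the Lie algebra in weight $r$. For the converse, fix $r\le r_0$ and $d\ne r$ (so $d<r$ by the degree bound) and choose any $V$ with $\dim(V)>3r$; this places the bidegree $(r,d)$ in the stable range and makes the hypothesis applicable, and Koszulness of $\SDer^+(\calO(V))$ in weight $r$ makes the detecting matrix element vanish, whence the $(r,d)$ component of the coPROP completion, and therefore of its connected summand $\calW$, is zero. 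As $r\le r_0$ and $d$ were arbitrary, $\calW$ is on the diagonal in weights $\le r_0$, i.e.\ $\calO^{\circlearrowleft}$ is Koszul there. The ``in particular'' statement is the case $r_0=\infty$.

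I would expect the only delicate point to be the bookkeeping that produces exactly the constant $\frac13$. It rests on combining two ingredients: in a fixed weight $r$ the homological degree satisfies $d\le r$ (each corolla contributes weight at least one and degree one), and the matrix element that reliably detects a class of bidegree $(r,d)$—pairing with the dual shape $\Hom(V^{\otimes(r+d)},V^{\otimes d})$, rather than with a smaller but non-detecting tensor power—has stable-range threshold $r+2d$, which is $\le 3r$ precisely because $d\le r$. The remaining steps, namely semisimplicity-based detection, additivity of the bigrading under the Cauchy products of the coPROP completion, and the identification of $\calW$ with the connected part of its completion, are routine, and the matching of differentials is already supplied by Theorem~\ref{th:graphcx2}.
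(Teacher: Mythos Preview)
Your proposal is correct and follows essentially the same route as the paper's proof: both deduce everything from Theorem~\ref{th:main2} (via the graph-complex description of Theorem~\ref{th:graphcx2}), use the detecting matrix element $\Hom(V^{\otimes(r+d)},V^{\otimes d})$ with stable threshold $r+2d$, and combine this with the degree bound $d\le r$ to obtain the constant $\tfrac13$. Your write-up is in fact more explicit than the paper's terse version, which defers to the proof of Corollary~\ref{cor:newFuchs} for the same bookkeeping; in particular, you spell out the passage between diagonality of $\calW$ and of its coPROP completion, which the paper leaves implicit.
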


\begin{proof}
Like in the proof of Corollary \ref{cor:newFuchs}, one sees that the dimension range $\dim(V)>r+2d$ is where we can use the coPROP completion of the bar construction to infer information about the weight $r$ part of the degree $d$ homology of the Lie algebra $\SDer^+(\calO(V))$. Moreover, the argument of that proof shows that if $\calO^{\circlearrowleft}$ is Koszul in weights $\le r_0$, then, in that dimension range, we have 
 \[
H^{(r)}_d(\SDer^+(\calO(V)),\k)=0 \text{ for } d\ne r\le r_0.
 \] 
Conversely, if $H^{(r)}_d(\SDer^+(\calO(V)),\k)=0$ for $d\ne r\le r_0$ and $\dim(V)>r+2d$, then, in particular, the homology of $\mathsf{B}^{\circlearrowleft}(\calO^{\circlearrowleft})$ vanishes for $d\ne r\le r_0$, since those weight graded components appear in the coPROP completion from Theorem~\ref{th:main2}. Finally, we always have $r\ge d$, so if $\dim(V)> 3r$, we automatically have $\dim(V)>r+2d$, completing the proof of the first statement. The second statement is a trivial consequence of the first one, since a wheeled operad is Koszul if and only if it is Koszul in weights $\le r_0$ for all $r_0$.
\end{proof}

Let us recall some examples where Corollary \ref{cor:Koszul} applies. 

\begin{example}\leavevmode 
\begin{enumerate}
\item For $\calO=\Com$, we have \cite[Th.~7.1.2]{MR2483835}
 \[
H_\bullet(\mathsf{B}^{\circlearrowleft}(\Com^{\circlearrowleft})_\o)\cong \Com^{\ac},\quad H_\bullet(\mathsf{B}^{\circlearrowleft}(\Com^{\circlearrowleft})_\w)\cong\overline{\Cyc}(s\mathbbold{1}),
 \]
where $\overline{\Cyc}$ corresponds to removing elements of degree one,
\item for $\calO=\Ass$, we have \cite[Th.~A]{MR2483835}
 \[
H_\bullet(\mathsf{B}^{\circlearrowleft}(\Ass^{\circlearrowleft})_\o)\cong \Ass^{\ac},\quad H_\bullet(\mathsf{B}^{\circlearrowleft}(\Ass^{\circlearrowleft})_\w)\cong\Cyc(s\mathbbold{1})\otimes\Cyc(s\mathbbold{1}),
 \]
\item for $\calO=\Lie$, we have \cite[Th.~4.1.1]{MR2641194} 
 \[
H_\bullet(\mathsf{B}^{\circlearrowleft}(\Lie^{\circlearrowleft})_\o)\cong \Lie^{\ac},\quad H_\bullet(\mathsf{B}^{\circlearrowleft}(\Lie^{\circlearrowleft})_\w)=0.
 \]
\end{enumerate}
Note that the first statement bears a striking resemblance with Corollary \ref{cor:barcom}, the second is somewhat related to Corollary \ref{cor:barass}, while the third one is drastically different from what Corollary \ref{cor:barlie} may suggest. It is not entirely clear how to interpret this, though one natural speculation is to examine the role of the operadic module of K\"ahler differentials, and not just the module $\partial(\calO)$ corresponding to universal multiplicative enveloping algebras. 
\end{example}

\begin{corollary}
For $\calO=\Com, \Ass, \Lie$, the Lie algebra $\SDer^+(\calO(V))$ is stably Koszul.
\end{corollary}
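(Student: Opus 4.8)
The plan is to obtain the statement as an immediate consequence of Corollary~\ref{cor:Koszul}. That corollary tells us that the Lie algebra $\SDer^+(\calO(V))$ is Koszul in all weights $r<\frac13\dim(V)$ for every finite-dimensional $V$ precisely when the wheeled operad $\calO^{\circlearrowleft}$ is Koszul; letting $\dim(V)\to\infty$, Koszulness of $\calO^{\circlearrowleft}$ is exactly what it means for $\SDer^+(\calO(V))$ to be stably Koszul. Thus the whole problem reduces to checking that $\calO^{\circlearrowleft}$ is a Koszul wheeled operad for each of $\calO=\Com,\Ass,\Lie$, that is, that the homology of its wheeled bar construction $\mathsf{B}^{\circlearrowleft}(\calO^{\circlearrowleft})$ is concentrated on the diagonal.

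For this I would read off the homology from the computations recalled in the Example just above, treating the operadic and wheeled parts separately. The operadic parts are $\Com^{\ac}$, $\Ass^{\ac}$, and $\Lie^{\ac}$, and they are concentrated on the diagonal because $\Com$, $\Ass$, and $\Lie$ are Koszul operads, so that the homology of their ordinary bar constructions lies on the diagonal. The wheeled parts are $\overline{\Cyc}(s\mathbbold{1})$, $\Cyc(s\mathbbold{1})\otimes\Cyc(s\mathbbold{1})$, and $0$ respectively. The case of $\Lie$ is immediate since the wheeled part vanishes, and the remaining two cases reduce to the single observation that the cyclic-word species $\Cyc(s\mathbbold{1})$ is concentrated on the diagonal.

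This last observation is the only point that needs a direct check, and it is short: a cyclic word of length $n$ in the generator $s\mathbbold{1}$ sits in homological degree $n$, one unit per suspended letter, and in weight $n$ for the weight grading induced on the wheeled bar construction, again one unit per letter, so it lies exactly on the diagonal $r=d$. Diagonality then propagates to the two species actually occurring: removing the length-one (equivalently, homological-degree-one) elements to form $\overline{\Cyc}(s\mathbbold{1})$ only discards diagonal classes and so keeps the rest on the diagonal, while the Cauchy product $\Cyc(s\mathbbold{1})\otimes\Cyc(s\mathbbold{1})$ stays diagonal because both weight and homological degree are additive under the Cauchy product. With both parts of $H_\bullet(\mathsf{B}^{\circlearrowleft}(\calO^{\circlearrowleft}))$ on the diagonal for $\calO=\Com,\Ass,\Lie$, these wheeled operads are Koszul, and Corollary~\ref{cor:Koszul} delivers the stable Koszulness of $\SDer^+(\calO(V))$.

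I do not expect a serious obstacle here, since the substantial content has already been placed in Corollary~\ref{cor:Koszul} and in the homology computations of the Example (taken from \cite{MR2483835} and \cite{MR2641194}); the only care needed is the bookkeeping that matches the weight grading on $\calO^{\circlearrowleft}$ with the length grading on cyclic words, which is exactly what makes the wheeled parts diagonal. Alternatively, one could bypass the diagonality bookkeeping entirely and simply cite that $\Com^{\circlearrowleft}$, $\Ass^{\circlearrowleft}$, and $\Lie^{\circlearrowleft}$ are Koszul wheeled operads from those same references, but deriving it from the Example keeps the argument self-contained within the present framework.
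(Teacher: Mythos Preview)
Your proposal is correct and matches the paper's approach: the paper states the corollary without proof, treating it as immediate from Corollary~\ref{cor:Koszul} combined with the homology computations in the preceding Example (which in turn cite \cite{MR2483835} and \cite{MR2641194}). Your extra bookkeeping verifying that $\Cyc(s\mathbbold{1})$ sits on the diagonal is a reasonable sanity check, though as you note one could simply cite the Koszulness of $\Com^{\circlearrowleft}$, $\Ass^{\circlearrowleft}$, $\Lie^{\circlearrowleft}$ directly from those references.
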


We should warn the reader that it is not enough to require that an operad $\calO$ is Koszul to conclude that the wheeled operad $\calO^{\circlearrowleft}$ is Koszul: the above-mentioned result of Bruinsma and Khoroshkin implies that it is not true for the operad of Poisson algebras. (An intriguing question that we do not consider in this paper is to apply our results to compute homology of Lie algebras of derivations of free algebras over operads of topological origin, such as the operad of Gerstenhaber algebras.) \\

A very interesting but probably highly nontrivial question is to determine, for the given fixed (sufficiently large) value of $\dim(V)$, the first and the second homology of the Lie algebra $\SDer^+(\calO(V))$, that is, the minimal sets of generators and relations of those algebras. It is known \cite{MR0607583} that the Lie algebra $\SDer^+(\Com(V))$ is always generated by elements of degree one. Let us give two simple examples showing that this is way too much to hope for in general. 

\begin{example}
Let us consider the free Lie algebra $\Lie(V)$ with $V=\mathrm{Vect}(x,y)$. Let us note that each derivation of degree one of that Lie algebra sends both $x$ and~$y$ to a multiple of $[x,y]$, and a direct calculation shows that a nonzero derivation of that form cannot have zero divergence. Thus, to show that the Lie algebra $\SDer^+(\Lie(V))$ is not generated by elements of degree one, it is enough to show that derivations with zero divergence exist. For that, we consider the inner derivation $D=\mathrm{ad}_{[x,y]}$. We have 
\begin{gather*}
D(x)=[[x,y],x]=[x,y]x-x(xy-yx)=([x,y]+xy)x-x^2y,\\
D(y)=[[x,y],y]=[x,y]y-y[x,y]=y^2x+([x,y]-yx)y,
\end{gather*}
and therefore
 \[
\Div(D)=[x,y]+xy+[x,y]-yx=3[x,y]
 \]
vanishes in the commutator quotient of the universal enveloping algebra. It is also possible to show, through a slightly more involved argument, that $D=\mathrm{ad}_{[x,y]}$ is not a commutator of elements of degree one in the free Lie algebra $\Lie(V)$ with $V=\mathrm{Vect}(x,y,z)$.
\end{example}

\begin{example}
As we saw in Example \ref{ex:bergman}, if we consider the free associative algebra $\Ass(V)$ with $V=\mathrm{Vect}(x,y)$, the derivation $D\in \Der(\Ass(V))$ that sends the generator $x$ to $[x,y]^2$ and the generator $y$ to zero has zero divergence. A direct calculation shows that for a basis of the vector space of elements of degree one of the Lie algebra $\SDer^+(\Ass(V))$ one can take
 \[
x^2\partial_y, y^2\partial_x, x^2\partial_x-(xy+yx)\partial_y, y^2\partial_y-(xy+yx)\partial_x
 \] 
Now we note that each of these derivations belongs to the Lie algebra of symplectic derivations \cite{MR1247289}, that is, they all annihilate the element $[x,y]$. However, the element $D$ sends $[x,y]$ to $[[x,y]^2,y]\ne 0$. Thus, the Lie algebra $\SDer^+(\Ass(V))$ is not generated by elements of degree one.
\end{example}

%Let us remark that in this paper, we did not consider a very intriguing question of computing homology of Lie algebras of derivations of free algebras over operads of topological origin, such as the Gerstenhaber and the Batalin--Vilkovisky operad. (On the wheeled operad side, a natural question arising in this context is to obtain a new interpretation of the homology of the wheeled bar construction of the nontrivial wheeled extension of the Batalin--Vilkovisky operad considered in \cite{MR3019721}.) \\

\section{Discussion of results and perspectives}\label{sec:perspectives}

\subsection{Mixed representation stability}

The notion of mixed representation stability \cite{MR3084430} goes back to the work of Brylinski \cite{MR1035220}, Feigin and Tsygan \cite[Chapter~4]{MR0923136} and Hanlon \cite{MR0946439}. Let us recall the necessary basics that apply in our context. Consider a sequence of $\mathfrak{gl}_n$-modules $V_n$ equipped with maps $V_n\to V_{n+1}$ that are injective for large $n$ and consistent with the embeddings $\mathfrak{gl}_n\hookrightarrow\mathfrak{gl}_{n+1}$. We further assume that the image of $V_n$ in $V_{n+1}$ generates all of $V_{n+1}$ under the action of $\mathfrak{gl}_{n+1}$ for large enough $n$. This sequence is said to be \emph{mixed representation stable} if for all partitions $\alpha,\beta$, the multiplicity of the irreducible $\mathfrak{gl}_n$-module $V(\alpha,\beta)$ with highest weight 
 \[
\sum_i\alpha_ie_i-\sum_j\beta_je_{n+1-j} 
 \]
in $V_n$ is eventually constant. This sequence is said to be \emph{uniformly mixed representation stable} if all multiplicities become eventually constant simultaneously: there is some $N$ such that for all partitions $\alpha,\beta$, the above multiplicity does not depend on $n$ for $n\ge N$. 

Feigin and Tsygan and, independently, Hanlon proved that for a (possibly non-unital) associative algebra $A$ and for each $i\ge 0$, the homology group $H_i(\mathfrak{gl}_n(A),\k)$ is uniformly mixed representation stable, and expressed the stable multiplicity of  $V(\alpha,\beta)$
in $H_\bullet(\mathfrak{gl}_n(A),\k)$ in terms of the cyclic homology and the bar homology of $A$. Our work leads to analogous results for the 
$\mathfrak{gl}_n$-module structures on $H_i(\Der^+(\calO(x_1,\ldots,x_n)),\k)$ and $H_i(\SDer^+(\calO(x_1,\ldots,x_n)),\k)$. Moreover, the language of wheeled operads and PROPs allows us to write a very short and compact formula for multiplicities of irreducible modules as follows.

\begin{theorem}\label{th:repstab}
For each $i\ge 0$, the homology groups $H_i(\Der^+(\calO(x_1,\ldots,x_n)),\k)$ and $H_i(\SDer^+(\calO(x_1,\ldots,x_n)),\k)$ are uniformly mixed representation stable. More precisely, for $n\to\infty$, and for any two partitions $\alpha\vdash p,\beta\vdash q$:
\begin{itemize}
\item the multiplicity of $V(\alpha,\beta)$ in $H_\bullet(\Der^+(\calO(x_1,\ldots,x_n)),\k)$  stabilizes to 
 \[
\left(\calP^c(\mathsf{B}^\circlearrowleft(\calO))(q,p)\otimes S^\alpha\otimes S^\beta\right)_{S_q\times S_p}
 \]
\item the multiplicity of $V(\alpha,\beta)$ in $H_\bullet(\SDer^+(\calO(x_1,\ldots,x_n)),\k)$ stabilizes to 
 \[
\left(\calP^c(\mathsf{B}^\circlearrowleft(\calO^\circlearrowleft))(q,p)\otimes S^\alpha\otimes S^\beta\right)_{S_q\times S_p}
 \]
\end{itemize}
\end{theorem}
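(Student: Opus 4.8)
The plan is to reduce the statement to the stable matrix-element computations already in hand, and then to invert the resulting data by invariant theory. By Propositions~\ref{prop:thmain1} and~\ref{prop:thmain2}, for all $p,q$ the $\mathfrak{gl}(V)$-invariant matrix elements
\[
\left(H_\bullet(\Der^+(\calO(V)),\k)\otimes\Hom(V^{\otimes p},V^{\otimes q})\right)^{\mathfrak{gl}(V)}
\]
and their $\SDer^+$-counterparts carry the same information as the bivariant homology, and by Theorems~\ref{th:main1} and~\ref{th:main2} their stable values are the components $\calP^c(\mathsf{B}^\circlearrowleft(\calO))(q,p)$ and $\calP^c(\mathsf{B}^\circlearrowleft(\calO^\circlearrowleft))(q,p)$. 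These spaces carry a residual action of $S_q\times S_p$ permuting the $q$ copies of $V$ and the $p$ copies of $V^*$ in $\Hom(V^{\otimes p},V^{\otimes q})$, coming from the bi-species structure of the coPROP completion. The first task is to read off the $\mathfrak{gl}_n$-isotypic decomposition of $H_\bullet$ from this family of $S_q\times S_p$-modules.

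For that I would invoke rational Schur--Weyl duality. Let $M$ denote a fixed homology group, a rational representation contained in a finite sum of mixed tensor powers. For $\alpha\vdash p$ and $\beta\vdash q$ the multiplicity of $V(\alpha,\beta)$ in $M$ equals $\dim(M\otimes V(\beta,\alpha))^{\mathfrak{gl}(V)}$, since $V(\alpha,\beta)^\ast\cong V(\beta,\alpha)$. In the stable range, the First and Second Fundamental Theorems identify the ``uncontracted'' part of $\Hom(V^{\otimes p},V^{\otimes q})=(V^\ast)^{\otimes p}\otimes V^{\otimes q}$, as a $\mathfrak{gl}(V)\times(S_q\times S_p)$-module, with $V(\beta,\alpha)\otimes(S^\beta\boxtimes S^\alpha)$, the remaining summands arising from contractions and involving only partitions with strictly fewer boxes. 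Extracting the $(S^\beta\boxtimes S^\alpha)$-isotypic part of the matrix element and removing the contraction terms by a downward induction on $p+q$ then gives
\[
\dim(M\otimes V(\beta,\alpha))^{\mathfrak{gl}(V)}=\dim\left(\calP^c(\mathsf{B}^\circlearrowleft(\calO))(q,p)\otimes S^\alpha\otimes S^\beta\right)_{S_q\times S_p};
\]
in characteristic zero invariants and coinvariants coincide and $S^\alpha,S^\beta$ are self-dual, which is why the answer may be written with coinvariants. Running the same computation with $\calO^\circlearrowleft$ in place of $\calO$ yields the formula for $\SDer^+$.

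The genuinely delicate point, and the one I expect to be the main obstacle, is the word \emph{uniformly}: one must exhibit a single $N=N(i)$, independent of $\alpha$ and $\beta$, beyond which all of the above multiplicities are constant. This does not follow from the componentwise computation alone, whose stable bound grows with $\ell(\alpha)+\ell(\beta)$, and infinitely many pairs $(\alpha,\beta)$ genuinely contribute in a fixed homological degree: already for $\calO=\Lie$ the family $\bigoplus_{n}S^n(V)$ occurs in $H_1(\Der^+(\Lie(V)))$, see Example~\ref{ex:nonkoszul}. The plan is to upgrade the stable value, in each fixed homological degree $i$, to a bona fide object of the category of algebraic bivariant representations of $GL_\infty$ and to identify it with a finitely generated module over a Noetherian twisted commutative algebra built from $\calO$; the Noetherianity theorems underlying mixed representation stability then give a uniform bound, the passage from the $GL_n$-decomposition to the stable labels being controlled by the Littlewood modification rules. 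Combined with the explicit estimates of the form $\dim(V)>r+2d$ extracted in the proofs of Theorems~\ref{th:graphcx1} and~\ref{th:graphcx2} (cf.\ Corollary~\ref{cor:newFuchs}), this should force a bound depending on $i$ alone and yield uniformity in the sense of~\cite{MR3084430,MR1035220,MR0946439,MR0923136}. I expect the cleanest home for this finiteness input to be the category $\calS(\k)$ of Powell~\cite{Powell21}, whose systematic use the paper defers; making the finite generation and the resulting uniform bound explicit there is where the real work lies.
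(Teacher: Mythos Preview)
The paper does not supply a proof of Theorem~\ref{th:repstab}: the statement appears in Section~\ref{sec:perspectives} and is followed only by a remark about unwrapping the coPROP-completion formula into coinvariants for set-partition stabilizers, with no argument offered. The intended reading is evidently that the multiplicity formula is a direct consequence of Theorems~\ref{th:main1} and~\ref{th:main2} (via Propositions~\ref{prop:thmain1} and~\ref{prop:thmain2}) combined with mixed Schur--Weyl duality, which is precisely the line you sketch in your first two paragraphs; there is therefore nothing in the paper to compare your argument against beyond this.

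Your concern about the word \emph{uniformly} is well taken and is not addressed anywhere in the paper. The stable range extracted from the proofs of Theorems~\ref{th:graphcx1} and~\ref{th:graphcx2} is $\dim(V)>d+p$, equivalently $\dim(V)>w+2d$ once one restricts to weight $w$; since for a fixed homological degree $i$ the weight is unbounded (your example of $\bigoplus_{n\ge 2} S^n(V)\subset H_1(\Der^+(\Lie(V)))$ from Example~\ref{ex:nonkoszul} is exactly to the point), this bound is not uniform in $(\alpha,\beta)$. Whether uniformity in the sense recalled from~\cite{MR3084430} genuinely holds for $H_i$, or whether the statement is meant to be read weight by weight (in which case uniformity is immediate from the explicit bound $\dim(V)>w+2i$), the paper does not say. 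Your proposed route via finite generation in a Noetherian functor category such as $\calS(\k)$ is a reasonable strategy for producing a bound depending on $i$ alone, but there is no counterpart in the text to compare it to; this is a point on which your proposal is more careful than the paper itself.
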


Unwrapping the formulas for the Cauchy products involved in the definition of the coPROP completion, one can express the above via the coinvariants of the action on sums of tensor and symmetric products of components of the bar constructions of subgroups $S_{\mathbf{m},\mathbf{k}}\subset S_{m_1+\cdots+m_q+k_1+\cdots+k_r}$, each such subgroup being the symmetry group of the set partition of a set of $m_1+\cdots+m_q+k_1+\cdots+k_r$ elements into blocks of sizes $m_1$,\ldots, $m_q$, $k_1$, \ldots, $k_r$, where the first $q$ blocks are additionally labelled $1,\ldots,q$ in some order. However, that latter formula is much harder to digest than the formula we chose to present.

\subsection{Stable cohomology of automorphism groups of free groups with bivariant coefficients}

Recall that wheeled operads appeared in recent work on the homology of automorphism groups of free groups, specifically, in the computation of the stable limit of  
 \[
H^\bullet(\Aut(F_n),\Hom_{\mathbb{Q}}(H(n)^{\otimes p},H(n)^{\otimes q})),
 \]
where $H(n)=H_1(F_n,\mathbb{Q})$ is the standard representation of $\Aut(F_n)$; in this case, homological stability follows from general results of Randal-Williams and Wahl~\cite{MR3689750} and Randal-Williams~\cite{MR3782426}. A formula for that stable limit was conjectured by Djament \cite{MR3993804} and proved by Lindell \cite{Lin22}. 

Importantly for us, Kawazumi and Vespa \cite{MR4647673} reformulated the conjecture of Djament using wheeled operads; combined with a theorem of Gran\aa{}ker~\cite{Gran08}, their formulation expresses the stable limit as the PROP completion of the linear dual of $H_\bullet(\mathsf{B}^{\circlearrowleft}(\ULie))$, the homology of the wheeled bar construction of the wheeled operad of unimodular Lie algebras. The similarity of this statement and our main results makes one wonder if the two can be related to each other, perhaps using the Johnson homomorphism \cite{MR3497296} that offers a bridge between automorphisms of free groups and derivations of free Lie algebras; however, \emph{unimodular} Lie algebras do not naturally appear in our context, so this question is very far from obvious.  

It is also worth noting that the limit of $H^\bullet(\Aut(F_n),\Hom_{\mathbb{Q}}(H(n)^{\otimes p},H(n)^{\otimes q}))$ can be used to obtain some information on the much more complicated stable cohomology of the so called IA-automorphism groups~\cite{habiro2023stable} ( incidentally, it is possible that conjectures of Katada \cite{katada2022stable} on the Albanese quotient of the homology of IA-automorphism groups may also benefit from being considered from the point of view of wheeled operads). Our strategy for studying the homology groups $H_i(\Der^+(\calO(x_1,\ldots,x_n)),\k)$ and $H_i(\SDer^+(\calO(x_1,\ldots,x_n)),\k)$ follows the very same logic. 

\subsection{Divergence of derivations and tame automorphisms}

Let us mention that the notion of divergence of a derivation of a free algebra has been implicit in the literature on automorphisms of free algebras for several decades. This goes back to the remarkable work of Bryant and Drensky \cite{MR1242836}, see also \cite{MR1356889,MR1242837}. The central notion motivating that work is that of a tame automorphism. Recall that an automorphism of a free algebra $\calO(x_1,\ldots,x_n)$ is called tame if it is obtained by compositions of automorphisms from $GL_n(\k)$ and obvious ``triangular'' automorphisms that send one of the generators $x_i$ to an element of the subalgebra generated by $x_1$,\ldots,$x_{i-1}$,$x_{i+1}$,\ldots,$x_n$ and fixes all those other generators. Already in the case of free commutative associative algebras and free associative algebras non-tame automorphisms exist \cite{MR2015334,MR2338130}, which is one of the factors which make studying automorphisms of free algebras hard. However, for commutative associative algebras, it was proved independently by Shafarevich~\cite{MR0607583} and Anick \cite{MR0704764} that non-tame automorphisms can be topologically approximated by tame ones; in fact, Shafarevich deduces this from the fact that the Lie algebra of the group of all automorphisms consists of all derivations of constant divergence. For some operads, it is possible to show that all tangent derivations of automorphisms must have constant divergence, which identifies a nontrivial class of situations where an analogue of the Anick--Shafarevich theorem may hold. It is interesting to note that groups of tame automorphisms of free operadic algebras emerged in recent work of Bartholdi and Kassabov \cite{bartholdi2023property} who used them to construct groups with Kazhdan's property (T) that map onto large powers of alternating groups. In further work, we intend to explore implications of our results for automorphisms and derivations of free Lie algebras and free associative algebras (where only partial results are presently available, see \cite{MR2508214,MR3047471}), as well as for an analogue of the Anick--Shafarevich approximation theorem in the case of free associative algebras.

\subsection{K-theory of operadic algebras}

A somewhat more conceptual relationship of our work to the study of automorphism groups of free algebras is as follows. Primitive elements of the canonical Hopf algebra structure on the homology of the Lie algebra $\mathfrak{gl}(A)$, given by the cyclic homology of $A$, should be thought of as the ``additive $K$-theory'' of the algebra~$A$, an approximation to the actual $K$-theory of $A$, which, over $\mathbb{Q}$, can be identified with the primitive elements for the canonical Hopf algebra structure on the homology of the group $GL(A)=\varinjlim GL_n(A)$. Similarly, our results suggest that the wheeled bar construction of an operad is an object of additive $K$-theoretic nature that approximates the actual $K$-theory of $\calO$, the primitive elements for the canonical Hopf algebra structure on the homology of the direct limit of $\Aut(\calO(V))$ as $\dim(V)$ goes to infinity. Though these $K$-groups can be described in the general context of algebraic $K$-theory \cite{MR0249491}, automorphism groups of free algebras are considerably more complicated than general linear groups, so this kind of $K$-theory does not seem to have been studied much. As one notable exception, one should mention the work of Connell concerning the $K$-theory of projective commutative associative algebras \cite{MR0384803,MR0409475,MR0556157} which is as exciting as it is demoralizing, for one quickly realizes that studying this subject instantly brings one to notoriously difficult open questions, such as the Zariski Cancellation Problem~\cite{MR0041480} and the Jacobian Conjecture \cite{MR0620428}. If one considers only the group $K_0$, there are more results available for various operads~$\calO$, for instance, in the work of Artamonov \cite{MR0498856} on metabelian Lie algebras, as well as Artamonov~\cite{MR0294213} and Pirashvili~\cite{MR1715892} on  nilpotent algebras in general. One possible way to build a connection between our work and the operadic $K$-theory computed via the automorphism groups would go along the lines of the beautiful theorem of Hennion~\cite{MR4237022} asserting that the cyclic homology of an algebra is the tangent version of its $K$-theory in a very precise sense. (Note that the fact that we work over a field ensures that the notion of a derivation is available at all, contrary, for example, to the cartesian context of Lawvere theories, where some recent significant breakthroughs have been made in studying stable (co)homology of automorphism groups as well as in computing the $K$-groups, see e.g. the work of Szymik and Wahl \cite{MR3953508} and Bohmann and Szymik~\cite{BoSz20} for some compelling examples.)
It is also very natural to attempt a generalization of remarkable recent results on derived representation schemes and cyclic homology \cite{MR3692888,MR3204869,MR3507922}, where one can speculate that such a generalization would replace the derived representation scheme of representations of an algebra $A$ on a vector space $W$ by the derived moduli stack of $\calO$-algebra structures on $W$. We hope to address this circle of questions elsewhere.

\section*{Acknowledgements } This work was supported by Institut Universitaire de France and by the French national research agency [grant ANR-20-CE40-0016]. It started in July 2023 during my visit to the Max Planck Institute for Mathematics in Bonn; I am grateful to MPIM for hospitality and excellent working conditions. During that visit, discussions with Ualbai Umirbaev were absolutely invaluable; in particular, it was from him that I learned about divergence-related techniques used to prove non-tameness of automorphisms, and that was a crucial hint for the right questions to ask in this context, which then unravelled the main results of this work. I am also grateful to Benjamin Enriquez, Mikhail Kapranov, Anton Khoroshkin, Erik Lindell, Sergei Merkulov, Vadim Schechtman, Sergei Shadrin, Arthur Souli\'e, Pedro Tamaroff, and Christine Vespa for very useful discussions of this work at various stages of its preparation. Special thanks are due to Geoffrey Powell who offered many useful comments on preliminary versions of the paper. I also wish to thank the referee of the paper for many remarks and suggestions that substantially improved the exposition, and in particular, led me to discover the very neat Theorem~\ref{th:graphcx2}. Last but not the least, I would like to emphasize that this work was heavily influenced by my mathematical interactions with Boris Feigin, from the first encounter as a high school student in 1996 to the present day. His way of thinking about mathematics shaped my worldview in more ways than I would ever be able to describe; I am excited to dedicate this paper to him on the occasion of his 70th birthday, and to wish him many happy returns of the day.  

\printbibliography

\end{document}